\DeclareMathOperator{\const}{const.}
\def\?[#1]{\textbf{[#1]}\marginpar{\Large{\textbf{??}}}} 
\let\epsilon=\varepsilon 
\newcommand{\RR}{{\mathbb R}}
\newcommand{\NN}{{\mathbb N}}
\newcommand{\CC}{{\mathbb C}}
\newcommand{\ch}{{\operatorname{c}}}
\newcommand{\ach}{{\operatorname{ac}}}
\newcommand{\TT}{{\mathbb T}}
\newcommand{\ZZ}{{\mathbb Z}}
\newcommand{\id}{\operatorname{id}}
\newtheorem{prop}{Proposition}[section]	
\newtheorem{thm}[prop]{Theorem}
\newtheorem{lemm}[prop]{Lemma}
\newtheorem{corr}[prop]{Corollary}
\theoremstyle{definition}
\newtheorem{defi}[prop]{Definition}
\newtheorem{rem}[prop]{Remark}
\newtheorem*{rmk}{Remark}
\newtheorem*{rmks}{Remarks}
\numberwithin{equation}{section}
\DeclareMathOperator{\Spec}{Spec}
\let\Im=\Imag
\let\Re=\Real
\DeclareMathOperator{\supp}{supp}
\DeclareMathOperator{\WF}{WF}
\DeclareMathOperator{\diag}{diag}
\DeclareMathOperator{\ran}{Ran}
\DeclareMathOperator{\Span}{span}
\newcommand\reallywidehat[1]{\arraycolsep=0pt\relax%
\begin{array}{c}
\stretchto{
  \scaleto{
    \scalerel*[\widthof{\ensuremath{#1}}]{\kern-.5pt\bigwedge\kern-.5pt}
    {\rule[-\textheight/2]{1ex}{\textheight}} 
  }{\textheight} %
}{0.5ex}\\           
#1\\                 
\rule{-1ex}{0ex}
\end{array}
}
\author{Simon Becker}
\address[Simon Becker]{Department of Mathematics, ETH Zurich, 8004, Zurich, Switzerland.}
\email{simon.becker@ethz.ch}
\author{Jens Wittsten}
\address[Jens Wittsten]{Department of Engineering, University of Bor{\aa}s, SE-501 90 Bor{\aa}s, Sweden}
\email{jens.wittsten@hb.se}
\title[Semiclassical quantization conditions in strained moir\'e lattices]{Semiclassical quantization conditions in strained moir\'e lattices}
\begin{document}



\begin{abstract}
In this article we generalize the Bohr-Sommerfeld rule for scalar symbols at a potential well to matrix-valued symbols having eigenvalues that may coalesce precisely at the bottom of the well.  
As an application, we study the existence of approximately flat bands in moir\'e heterostructures such as strained two-dimensional honeycomb lattices in a model recently introduced by Timmel and Mele.
\end{abstract}
\maketitle

\section{Introduction}

In recent decades, scientists have mastered the creation of two-dimensional crystals, consisting of single atomic or molecular layers. When these crystals are stacked with slight offsets or rotations, they produce a large-scale interference pattern known as a {\it moiré pattern}. In such moir\'e materials, the electronic states align with the periodicity of the moiré pattern rather than that of the original crystal, exerting a profound influence on the material's electronic properties.

Twisted bilayer graphene (TBG), where two layers of graphene are stacked with a slight twist, serves as a prime example of this phenomenon. Graphene is a two-dimensional crystal formed by a single layer of carbon atoms arranged in a honeycomb lattice. When stacked at specific twist angles, known as {\it magic angles}, TBG exhibits remarkable properties including unconventional superconductivity and a distinctive flat electronic band structure at low energies. Tarnopolsky, Kruchkov, and Vishwanath \cite{magic} introduced a chiral continuum model for TBG which captures this fundamental nature of TBG's magic angles by showcasing perfectly flat Bloch-Floquet bands precisely at the magic angles. In \cite{becker2021spectral,becker2022mathematics} it was shown that as the twisting angle is very small, virtually every band close to zero energy is essentially flat for this model.

In this article, we study an analogue of the above-mentioned chiral model in one dimension, introduced by Timmel and Mele \cite{TM20}, where
a moir\'e-type structure appears in one dimension through the application of physical strain. 
While this model does not exhibit perfectly flat bands, we show that there exist 
approximate eigenvalues of infinite multiplicity 
in the limit of very large moir\'e cells. 
These approximate eigenvalues of infinite multiplicity are of the form  \eqref{eq:quasimode_constr} and correspond to {\it almost flat bands}, see \S\ref{ss:flatbands}.
The infinite multiplicity is obtained from an infinite number of disjoint wells which give rise to an infinite number of almost orthonormal quasimodes.
The model actually doesn't have any exact eigenvalues so the physical relevance of the quasimodes is their implication of approximately flat band spectrum.

The key ingredient in obtaining these approximate eigenvalues, and 
our main mathematical contribution, is a generalization of the Bohr-Sommerfeld quantization condition at potential wells to fairly general matrix-valued symbols (see Theorem \ref{thm:expansionsintro} and Corollary \ref{cor:periodicquasimodesnormalform}). This was previously only known for operators that are essentially scalar, and the semiclassical techniques using symplectic changes of coordinates to reduce the symbol to a harmonic oscillator do not generalize to systems. While our technique generalizes to higher dimensions, we restrict ourselves to the one-dimensional case in this work.

\begin{figure}
\includegraphics[scale=0.39]{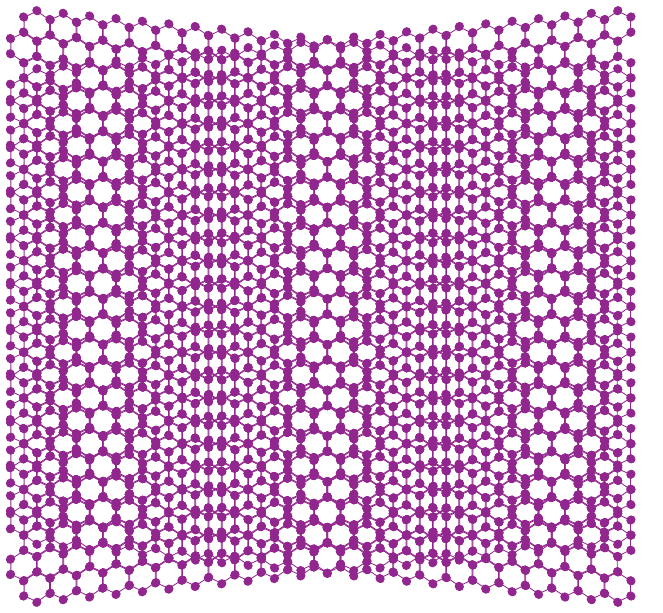} \quad
\includegraphics[scale=0.38]{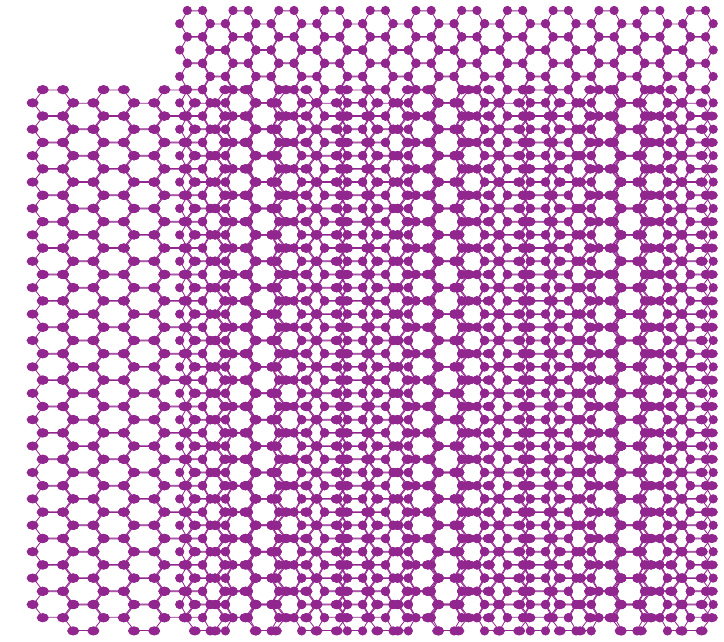} \quad
\includegraphics[scale=0.395]{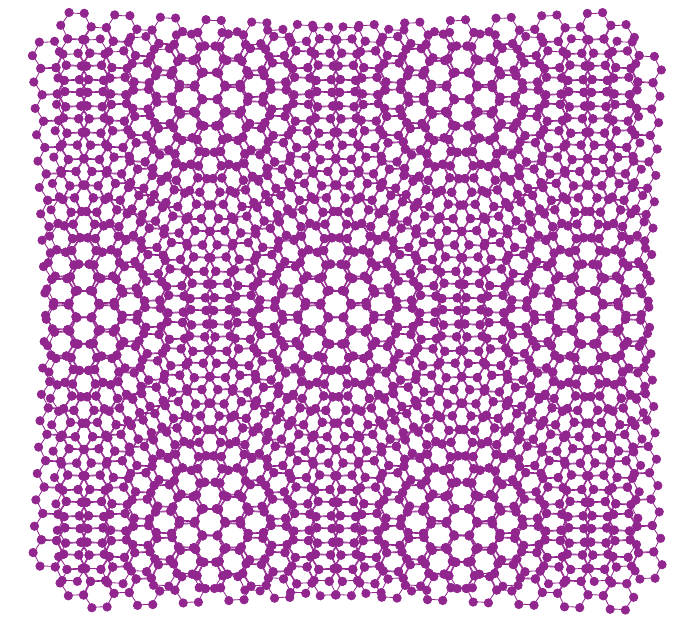}
\caption{Left: superposition of two honeycomb lattices subject to antisymmetric shear strain. Middle: superposition of two honeycomb lattices where one lattice is subject to uniaxial strain in horizontal direction. Both superpositions create 1D moir\'e patterns in the horizontal direction. For comparison, the right panel shows a 2D moir\'e pattern created by a superposition of twisted honeycomb lattices without strain.\label{fig:one}}
\end{figure}

The setting that we will be concerned with is illustrated in Figure \ref{fig:one}, where we describe the superposition of two honeycomb lattices under strain. The left superposition is subjected to antisymmetric shear strain, while the middle superposition is subjected to uniaxial strain in the horizontal direction. Contrary to the case of twisted bilayer graphene in which the moir\'e pattern is a two-dimensional structure (see the right panel), the left and middle moir\'e patterns in Figure \ref{fig:one} are essentially one-dimensional.

Apart from modeling graphene sheets under mechanical strain, the model we analyze in this work has also been considered for low-energy electron diffraction (LEED) studies in surface reconstructions of metals. More precisely, metals such as iridium, platinum, and gold are known to exhibit columns of honeycomb lattice structures on their surface with pits in between them. This phenomenon where the crystal structure of the metal is broken up on the surface is known as \emph{surface reconstruction} \cite{H12,LEED81}. In addition, the existence of one-dimensional flat bands in twisted one-dimensional Germanium selenide lattices has been recently discovered in \cite{GeSe20}.

In order to understand such emerging physical phenomena, we develop a new spectral analysis of operators with matrix-valued symbols exhibiting a potential well. For Schrödinger operators this has been studied by Barry Simon \cite{simon1983semiclassical} and has been generalized to pseudodifferential operators by Helffer-Robert \cite{Helffer-well} (see also \cite{DimSjo}) to symbols that, via a linear symplectic transformation, can be locally reduced to a principal symbol with non-degenerate minimum at $(x,\xi)=(0,0)$ such that
\begin{equation}\label{eq:princ_symb}
 p_0(x,\xi) = \frac{\lambda}{2}(\xi^2+x^2) + \mathcal{O}( (x,\xi)^3)\text{ with }\lambda>0.
 \end{equation}
The Bohr-Sommerfeld quantization condition near a potential well then allows for the following asymptotic spectral description, see also \cite{CdV2005bohr,ifa2018bohr}. Indeed, we can parametrize periodic orbits $p_0^{-1}(\tau)$ by the bicharacteristic flow $I\ni t \mapsto (x(t),\xi(t))$ at the energy level $\tau.$ We shall then write $\int_{p_0^{-1}(\tau)}f \ dt := \int_I f(x(t),\xi(t)) \ dt.$ The following theorem is discussed in \cite[Theorem 14.9]{DimSjo}, where we used the convention of \cite{CdV2005bohr,ifa2018bohr} instead for the characterization of the approximate eigenvalues.

\begin{thm}
\label{theo:DiSJ}
Let $p(x,\xi;h) \in S(1)$ with asymptotic expansion $p\sim\sum_{k\ge0}h^kp_k$ and assume that the principal symbol $p_0(x,\xi)\ge 0$ with $p_0(x,\xi)=0$ only at $(x,\xi)=0,$ as in \eqref{eq:princ_symb}, and also $\liminf_{\vert (x,\xi) \vert \to \infty} p(x,\xi)>0$. Let $\gamma_{\tau}:=p_0^{-1}(\tau)$. Then there exists a smooth function $F(\tau,h) \sim \sum_{n=0}^{\infty} h^n F_n(\tau)$, with $F_0(\tau)=\frac{1}{2\pi} \int_{\gamma_{\tau}} \xi dx=\tau/\lambda+\mathcal O(\tau^2)$, $F_1(\tau) =\frac{1}{2}-\int_{\gamma_{\tau}} p_1 dt $, and $$F_2(\tau)=\frac{1}{4\pi}\partial_{\tau} \int_{\gamma_{\tau}} \frac{1}{12} \operatorname{det}\begin{pmatrix} \partial_{x \xi} p_{0} &  \partial_{x x} p_{0}\\  -\partial_{\xi \xi} p_{0} & -\partial_{x \xi} p_{0} \end{pmatrix} -p_1^2  dt -\frac{1}{2\pi} \int_{\gamma_\tau} p_2  dt ,$$ such that for every fixed $\delta>0$, the eigenvalues of $P(h)= p^w(x,hD;h)$ in $(-\infty, h^{\delta})$ are defined by the implicit equation $F\left(\lambda_k(h),h\right) + \mathcal{O}(h^{\infty})=kh$ with $k \in \mathbb N_0$. 
\end{thm}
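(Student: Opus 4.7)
The approach I would take is the semiclassical Birkhoff normal form strategy: reduce $P(h)$ microlocally near the bottom of the well to a function of the one-dimensional harmonic oscillator, and then read off the spectrum. First I would use the confinement hypothesis $\liminf_{\vert(x,\xi)\vert\to\infty}p(x,\xi)>0$ together with $p_0>0$ away from the origin to show that $P(h)$ is elliptic outside a compact neighborhood of $(0,0)$. Standard semiclassical elliptic-parametrix/Agmon estimates then imply that every $L^2$-normalized (quasi-)eigenfunction with eigenvalue in $(-\infty,h^{\delta})$ is microlocalized to an arbitrarily small neighborhood of $(0,0)$ up to an $\mathcal O(h^{\infty})$ error, so the global problem is reduced to a microlocal one near the well.

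Next, a linear symplectic change of coordinates followed by metaplectic conjugation reduces the quadratic part of $p_0$ to $\frac{\lambda}{2}(x^2+\xi^2)$. I would then construct, by iterated conjugation with semiclassical Fourier integral operators generated by symbols vanishing to increasing order at $(0,0)$, a microlocal unitary $U(h)$ such that $U^{*}P(h)U = f(I^{w},h;h)+\mathcal O(h^{\infty})$ near the origin, where $I^{w}=\tfrac{1}{2}(-h^{2}\partial_{x}^{2}+x^{2})$ has eigenvalues $h(k+\tfrac{1}{2})$, and $f(\cdot,h;h)\sim\sum_{k\ge 0}h^{k}f_{k}$ is a smooth classical symbol. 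The classical Birkhoff normal form, order by order, solves cohomological equations of the form $\{I,g\}=r-\langle r\rangle$ (averaging along the Hamilton flow of $I$, which is $2\pi$-periodic), eliminating everything except functions of $I$ alone; the semiclassical version performs the same construction on the full expansion $p\sim\sum h^{k}p_{k}$.

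The eigenvalue condition then reads $\lambda_{k}(h)=f(h(k+\tfrac12),h;h)+\mathcal O(h^{\infty})$, and inverting it gives $F(\lambda_{k}(h),h)+\mathcal O(h^{\infty})=kh$ with $F(\cdot,h)$ essentially $f^{-1}(\cdot,h)$ shifted by $h/2$. The identification $F_{0}(\tau)=\tfrac{1}{2\pi}\int_{\gamma_{\tau}}\xi\,dx$ follows from Hamilton-Jacobi theory: the inverse of $f_{0}$ is precisely the action variable in action-angle coordinates, equal to the enclosed area divided by $2\pi$. The constant $\tfrac{1}{2}$ in $F_{1}$ is the Maslov/half-integer shift from the harmonic oscillator, and the $\int_{\gamma_{\tau}}p_{1}\,dt$ contribution is the first-order flow average of the subprincipal symbol produced by the normal-form reduction. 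The formula for $F_{2}$ is then obtained by pushing the normal form to order $h^{2}$: the $p_{1}^{2}$ and $p_{2}$ contributions are direct, while the determinant expression arises from the order-$h^{2}$ correction of the Moyal product applied to the non-quadratic parts of $p_{0}$ after Birkhoff reduction, rewritten invariantly along the periodic orbit via an integration-by-parts/derivative-in-$\tau$ identity.

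The hard part is precisely this last step: tracking the order-$h^{2}$ corrections through the conjugations, which involve the Moyal star product beyond its leading Poisson-bracket term, and verifying that all coordinate-dependent contributions assemble into the invariant formula for $F_{2}$ stated in the theorem, rather than into an expression depending on the choice of symplectic coordinates used in the Birkhoff reduction. Everything else — the microlocal confinement, the formal normal form, and the eventual inversion — is a routine (if technical) implementation of the semiclassical calculus as developed in \cite{DimSjo,Helffer-well,simon1983semiclassical}.
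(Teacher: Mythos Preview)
The paper does not actually prove this theorem: it is stated as a background result, with the sentence ``The following theorem is discussed in \cite[Theo.\@14.9]{DimSjo}, where we used the convention of \cite{CdV2005bohr,ifa2018bohr} instead for the characterization of the approximate eigenvalues.'' There is therefore no in-paper proof to compare against.

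That said, your sketch is the standard route and is essentially the one taken in the cited reference \cite{DimSjo}: microlocal confinement near the well, reduction by a semiclassical FIO to a function of the harmonic oscillator (quantum Birkhoff normal form), and inversion to get the Bohr--Sommerfeld rule with Maslov shift. Your identification of where the difficulty lies --- the invariant form of $F_2$ --- is accurate; the detailed computation of $F_2$ in the form stated is carried out in \cite{CdV2005bohr,ifa2018bohr} rather than in \cite{DimSjo} itself, which is why the paper cites both. So your proposal is correct and aligned with the literature the paper defers to, but there is nothing in the paper's own text to compare it to.
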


\begin{rmk}
The proof of Theorem \ref{theo:DiSJ} relies on what happens for the harmonic oscillator $p(x,\xi)= p_0(x,\xi)= \lambda (x^2+\xi^2)/2$. In this case, at energy level $\tau$, the bicharacteristic flow is $(x(t),\xi(t))= \sqrt{2\tau /\lambda}(\cos(2\pi t),\sin(2\pi t))$ for $t \in [0,1].$ Then $F_0(\tau) = \tau/\lambda$, $F_1(\tau)=1/2$ and $F_2(\tau)=0$, as the determinant in the theorem is a constant independent of $\tau$. Since $F_0$ is positive, we find for $\lambda_k(h) \ge 0$ that $$F_0(\lambda_n(h)) = \frac{\lambda_n(h)}{\lambda}+\mathcal O(h^3)= (n-1/2)h.$$
By the positivity constraint, eigenvalues start only at $n = 1$ and thus $\lambda_n(h)=\lambda (n-1/2)h+\mathcal O(h^3)$ for $n \ge 1.$ The vanishing of all higher order terms in case of the quantum harmonic oscillator is discussed in \cite{CdV2005bohr,ifa2018bohr}.
 \end{rmk}

In $F_1$, the value $\frac{1}{2}$ corresponds to the Maslov index, whereas $\int_{\gamma_{\tau}} p_1 \vert dt \vert$ is associated with Berry's phase, see also \cite{carmier2008berry}. 
Here the symbol class $S(1)$ is the set of all smooth functions with bounded derivatives of any order, and $p^w(x,hD)$ is the semiclassical Weyl quantization of $p$, see Section \ref{sec:pdo}. In the study of the Harper operator, this result has been generalized by Helffer-Sjöstrand \cite[Corollary 3.1.2]{helffer1990analyse} to matrix-valued symbols that can be block-diagonalized to a scalar symbol exhibiting a potential well and a possibly matrix-valued remainder that is spectrally gapped from the well. In fact, let $M \in S(1)$ be a self-adjoint matrix-valued symbol with $M(x,\xi)\in \mathbb C^{n \times n}$ and with one eigenvalue $\mu \in S(1)$, of algebraic multiplicity one, such that 
\begin{equation}
\label{eq:gap_condition}
 \inf_{(x,\xi) \in T^*\mathbb R} d(\Spec(M(x,\xi))\backslash \mu(x,\xi),\mu(x,\xi))>0.
 \end{equation}
Then there exists a unitary pseudodifferential operator $U(x,hD)$ such that 
\[ U^*(x,hD) M(x,hD) U(x,hD) = \operatorname{diag}( \tilde \mu(x,hD), \tilde M(x,hD) )+\mathcal O(h^{\infty}),\]
where $\tilde \mu$ has principal symbol $\mu.$

Our objective in this work is to study self-adjoint operators $P^w(x,hD)$ for which the gap-condition \eqref{eq:gap_condition} fails due to the existence of a degenerate potential well in the following sense:

\begin{defi}\label{def:degeneratewell}
Let $P_0(x,\xi)\in \CC^{2\times2}$ and assume that there is a constant $C\ge0$ such that $P_0(x,\xi)+C\operatorname{id}_{\CC^2\times\CC^2}$ is positive semi-definite for all $(x,\xi)$.
If 
\begin{equation}\label{eq:degeneratewell}
P_0(x,\xi)=(a(\xi-\xi_0)^2+b (x-x_0)^2)\id_2+\mathcal{O}(\lvert (x,\xi)-(x_0,\xi_0)\rvert^3)
\end{equation}
for some $a,b>0$, then we say that $P_0$ has a potential well at $(x_0,\xi_0)$. 
If there is a neighborhood of $(x_0,\xi_0)$ in which $P_0$ has only one distinct eigenvalue of constant multiplicity 2 then we say that the well is non-degenerate, otherwise it is said to be degenerate. We say that the system $P^w(x,hD)$ has a (degenerate or non-degenerate) well 
at $(x_0,\xi_0)$ 
if the principal symbol $P_0$ of $P^w(x,hD)$ has a (degenerate or non-degenerate) well at $(x_0,\xi_0)$.
\end{defi}

Note that if $P_0$ has a degenerate potential well at $(x_0,\xi_0)$ then the eigenvalues necessarily coalesce at $(x_0,\xi_0)$ since $P_0(x_0,\xi_0)$ has only the eigenvalue $\lambda(x_0,\xi_0)=0$ with algebraic multiplicity two. In particular, \eqref{eq:gap_condition} is not satisfied.
The main mathematical contribution of this article is the construction of quasimodes for such symbols in Theorem \ref{thm:expansionsintro} and Corollary \ref{cor:periodicquasimodesnormalform}, corresponding to operators on the real line and on the circle, respectively. Note that \eqref{eq:gap_condition} is also not satisfied when $P_0$ has a non-degenerate potential well since then $P_0$ has a double instead of simple eigenvalue there. Theorem \ref{thm:expansionsintro} and Corollary \ref{cor:periodicquasimodesnormalform} apply to this case too. However, when $P^w$ is self-adjoint with a non-degenerate potential well then $P_0$ is essentially scalar near the well so scalar methods can be used instead. For a degenerate well this is not possible since the multiplicity changes from one to two precisely at the bottom of the well; this is the reason we choose to call that case degenerate.

Before stating these results we briefly describe the setting.
In Theorem \ref{thm:expansionsintro}
we assume that the full symbol of $P^w$ has either an asymptotic expansion in $S(1)$ which is relevant for tight-binding operators, or in $S^2(T^*\mathbb R)$ which allows for second order differential operators including the square of a Dirac operator 
(see Section \ref{sec:pdo} for definitions of symbol classes). 
By multiplying the principal symbol $P_0$ of $P^w$ by a scalar if necessary we may assume that $a=1$ in \eqref{eq:degeneratewell}. We will also assume that $x_0=0$, but keep $\xi_0$ to illustrate its effects. 
Thus, $P\sim\sum h^jP_j$ where $P_0$ has a potential well at $(0,\xi_0)$. 
We may without loss of generality assume that the subleading symbol satisfies $P_1(0,\xi_0)=\diag(\mu_1,\mu_2)$ with $\mu_1,\mu_2\in\RR$. Indeed, since $P_1(x,\xi)$ is Hermitian when $P^w$ is self-adjoint, we may conjugate $P_1(0,\xi_0)$ with a unitary constant matrix to obtain this form, and this preserves \eqref{eq:degeneratewell} which proves the claim.
We note that if $P_0\in S(1)$ is positive semi-definite then the sharp G\aa rding inequality for systems (which for example follows from H\"ormander's Weyl calculus \cite[Theorem 6.8]{hormander1979weyl}, see Lemma \ref{lem:Gårding} below) gives $(P_0^w(x,hD)u,u)\ge-Ch\lVert u\rVert^2$ for $h$ small which implies that also 
\begin{equation*}
(P^w(x,hD)u,u)\ge-Ch\lVert u\rVert^2
\end{equation*}
 for $h$ small.
If $u$ is microlocally small in a neighborhood of the well at $(0,\xi_0)$, this will lead to a sufficiently good positive lower bound for our applications. 
We will be able to argue in a similar way when $P\in S^2(T^*\mathbb R)$ by assuming, in addition, that if $P\in S^2(T^*\mathbb R)$ then
\begin{equation}\label{eq:lowerboundS2}
(P^w(x,hD)u,u)\ge  (Vu,u)-Ch(u,u)
\end{equation}
where $V\in C^\infty(\RR;\RR^{2\times 2})$ is a matrix-valued function such that for some $c_1,c_2>0$, $V(x)+c_1\operatorname{id}_{\CC^{2\times 2}}$ is positive semi-definite for all $x$, and $V(x)=c_2x^2\operatorname{id}_{\CC^{2\times 2}}+\mathcal O(x^3)$ near $x=0$.
This assumption is tailored towards the low-energy model we study in \S\ref{ss:lowenergy}, and is natural for the square of a Dirac operator with a potential well, see Lemma \ref{lem:loweroperatorbound}.

We show that under these conditions, we can use the standard rescaling $y=h^{-\frac12}x$ to write $P^w$ as 
\begin{equation}\label{eq:normalformpullbackintro}
P^w(x,hD)=\gamma^*\circ\mathcal T\circ(\gamma^{-1})^*,\qquad \gamma(x)=h^{-\frac12}x,
\end{equation}
where $\mathcal T$ is an operator of the form
\begin{equation}\label{eq:normalformoperatorintro}
\mathcal Tv(y)= e^{i\xi_0 y/\sqrt h}T^w(y,D)(e^{-i\xi_0\bullet/\sqrt h}v)(y),
\end{equation}
see Proposition \ref{prop:normalform}.
Here $T^w=hT_0^w+h^{3/2}R_h^w$, with the leading symbol $T_0$ given as the direct sum of two harmonic oscillators 
\begin{equation*}
 T_0(y,\eta) = \operatorname{diag}(\eta^2 + \omega^2 y^2 + \mu_1, \eta^2 + \omega^2 y^2+ \mu_2),
\end{equation*}
where $\omega>0$ is determined from $a,b$ in Definition \ref{def:degeneratewell}. (Assuming $a=1$ we have $\omega=\sqrt b$.)
The symbol expansion of $R_h^w$ is described in Proposition \ref{prop:normalform}. 
These subleading symbols couple the two harmonic oscillators in a non-trivial way, preventing us from resorting to scalar methods. Instead, we use the fact that $R_h^w\varphi=\mathcal O(1)$ for eigenvectors $\varphi$ of $T_0^w$ to perform a perturbative analysis and obtain the following theorem, which is the main mathematical result of the paper. For the statement we recall the harmonic oscillator basis functions 
\begin{equation}\label{eq:harmoscbasis}
\begin{split}
 \varphi_{n,\omega}(y) &= \tfrac{1}{\sqrt{2^n n!}} \left(\tfrac{\omega}{\pi}\right)^{1/4}  H_n\left(\sqrt{\omega} y\right)e^{-\tfrac{\omega y^2}{2}}=:\phi_{n, \omega}(y) e^{-\tfrac{\omega y^2}{2}}  \quad n \in \mathbb N_0,\quad \omega>0,
 \end{split}
 \end{equation}
satisfying $(D_y^2 + \omega^2 y^2 ) \varphi_{n,\omega} = (2n+1) \omega \varphi_{n,\omega}$, and normalized in $L^2(\RR)$. 
Here $H_n$ is the $n$:th Hermite polynomial.

\begin{thm}\label{thm:expansionsintro}
Let $P \in C^\infty(T^*\RR)$ have asymptotic expansion $P\sim\sum_{k\ge0}^\infty h^kP_k$, where either $P_k\in S(1)$ for $k\ge0$, or $P_k\in S^{2-k}(T^*\mathbb R)$ for $k\ge0$, so that either $P\in S(1)$ or $P\in S^2(T^*\RR)$. Assume \eqref{eq:lowerboundS2} if $P\in S^2(T^*\mathbb R)$. Assume that $P_0$ has a potential well at $(0,\xi_0)$ in the sense of Definition \ref{def:degeneratewell} and that $P_1(0,\xi_0)=\diag(\mu_1,\mu_2)$ with $\mu_1,\mu_2\in\RR$.
Then $P^w$ can be written as in \eqref{eq:normalformpullbackintro}--\eqref{eq:normalformoperatorintro}, with $T^w$ in \eqref{eq:normalformoperatorintro} satisfying the following: For any $n\in\NN_0$ there is an $h_0>0$ together with quasimodes $v^{(j)}(n)$ and approximate eigenvalues $\lambda^{(j)}(n)$, $j=1,2$, such that for any $\ell\in\NN_0$ 
\begin{equation*}
(T^w(y,D)-\lambda^{(j)}(n))v^{(j)}(n,y)  = \mathcal{O}_\mathscr{S}(h^{\ell+\frac32}),\quad 0<h<h_0,
\end{equation*}
where $ v^{(j)}(n,y) =\sum_{i =0}^{2\ell}h^{i/2} v^{(j)}_i(n,y) e^{-\omega y^2/2}$ mod $\mathcal O_{\mathscr S}(h^{\ell+1/2})$ 
for some polynomials $v_{i}^{(j)}$, and where $\lambda^{(j)}(n) = h \sum_{i =0}^{2\ell} h^{i/2} \lambda_{i}^{(j)}(n)$ mod $\mathcal O(h^{\ell+3/2})$ with $\lambda^{(j)}_0(n) = (2n+1)\omega+\mu_j$. The leading order amplitudes are $v^{(1)}_0(n)=(\phi_{n,\omega},0)$ and $v_0^{(2)}(n)=(0,\phi_{n,\omega})$. 
\end{thm}

The proof of Theorem \ref{thm:expansionsintro} takes up the bulk of Section \ref{sec:wellsnormalform}.
We remark that the presence of $\mu_1$ and $\mu_2$ in $T_0$ has a nontrivial influence on the available methods for constructing quasimodes. (Note that resonance occurs when $\mu_1-\mu_2\in2\ZZ\omega$, see the discussion preceding \eqref{eq:eigenvalueordering} below.) To illustrate this, we include an explicit WKB-type construction of approximate eigenvalues and quasimodes in Proposition \ref{thm:quasimodes2} as a special case of Theorem \ref{thm:expansionsintro}, which relies on one of the following constraints being fulfilled: either $\mu_1-\mu_2 \notin 2\ZZ\omega$, or $\mu_1-\mu_2\in4\ZZ\omega$ but the expansion of $R_h$ satisfies an alternating block symmetry assumption (diagonal and off-diagonal, alternating). 
In particular, for the construction to work it is crucial that $\mu_1-\mu_2\notin (4\ZZ+2)\omega$, and as we will see below, this assumption is violated in the models of one-dimensional strained moir\'e lattices introduced by Timmel and Mele \cite{TM20}.

To treat the general case, we design a new phase space version of a quasimode construction used by Barry Simon \cite{simon1983semiclassical}. This includes a phase space version of the IMS localization formula in Lemma \ref{lem:IMS} which may be of independent interest. To isolate the spectral contribution of the degenerate potential well at $(0,\xi_0)$, we use the notion of a {\it massive Weyl operator} by adding to $P^w(x,hD)$ the operator $(1-\chi^w(x,hD))\operatorname{id}_{\CC^{2\times 2}}$ where $\chi$ is a symbol having small support such that $\chi\equiv1$ near the well, see \eqref{eq:massive}. Since the quasimodes have semiclassical wavefront set confined to $\{(0,\xi_0)\}$, the difference between $P^w(x,hD)$ and its massive counterpart acting on the quasimodes is $\mathcal O(h^\infty)$ as $h\to0$.

If the symbol $P(x,\xi)\in C^\infty(T^*\RR)$ in Theorem \ref{thm:expansionsintro} in addition is 1-periodic in $x$, then $P^w$ preserves periodicity in the sense that if $u(x)\in C^\infty(\RR)$ is 1-periodic then so is $P^w(x,hD)u(x)$.
Moreover, $P$ can be identified with a symbol on $T^*\TT$, where $\TT:=\RR/\ZZ$ is the one-dimensional torus.
By identifying 1-periodic functions on $\RR$ with functions on $\TT$, pseudodifferential operators on $\TT$ can in this way be identified with operators having symbols that are 1-periodic in $x$, acting on 1-periodic functions, see \S\ref{subsec:pseudo} for details. As a consequence of Theorem \ref{thm:expansionsintro} we therefore obtain the following quasimode construction for $P^w(x,hD)$ on $L^2(\mathbb T;\CC^2)$ near a potential well.
We give the proof in Subsection \ref{ss:quasimodes}.

\begin{corr}\label{cor:periodicquasimodesnormalform}
Let $P$ satisfy the assumptions in Theorem \ref{thm:expansionsintro}. Assume in addition that $P(x,\xi)$ is 1-periodic in $x$, and regard $P^w(x,hD)$ as a pseudodifferential operator on $\TT$.
Let $\{v^{(j)}(n)\}_{n\in\NN_0}$ and $\{\lambda^{(j)}(n)\}_{n\in\NN_0}$, $j=1,2$, be given by Theorem \ref{thm:expansionsintro}. 
Then the family of quasimodes $\{u^{(j)}(n)\}_{n\in\NN_0}$ in $C^\infty(\mathbb T;\CC^2)$ given by 
$$
u^{(j)}(n,x)=h^{-\frac14}\sum_{k\in\ZZ} e^{i\xi_0 (x-k) /h}v^{(j)}(n,(x-k)h^{-\frac12})
$$
satisfies
$\WF_h(u^{(j)})=\{(0,\xi_0)\}\subset T^*\mathbb T$, $\lVert u^{(j)}\rVert_{L^2(\mathbb T;\CC^2)}=1+\mathcal{O}(h^\frac12)$ and
\begin{equation}
\label{eq:quasimode_constr}
(P^w(x,hD)-\lambda^{(j)}(n))u^{(j)}(n)=\mathcal{O}_{L^2(\mathbb T;\CC^2)}(h^{\ell+\frac32})
\end{equation}
for $0<h<h_0(n)$ and all $\ell\in\NN_0$. 
\end{corr}

In the second part of the paper we then apply Corollary \ref{cor:periodicquasimodesnormalform} to the models of Timmel and Mele \cite{TM20} of one-dimensional moir\'e structures discussed above. To describe our results in more detail we first need to introduce the models. 

\subsection{Model Hamiltonian}
We start by introducing the tight-binding (i.e., discrete) model of one-dimensional moir\'e structures. 
Here, an effectively one-dimensional moir\'e pattern (visible along the horizontal direction in Figure \ref{fig:one}) has been formed due to periodic strain-modulation. Using periodicity in the orthogonal direction, Floquet theory is used to obtain a family of Hamiltonians, depending on a quasimomentum $k_\perp\in\RR$  perpendicular to the moir\'e direction, that approximates the dynamics of strained bilayer graphene in the direction of the moir\'e pattern, and takes the following form: 
Let $\psi=(\psi_n)_{n=-\infty}^\infty$ be a vector in $\ell^2(\ZZ;\CC^4)$. The Harper model for strained bilayer graphene \cite{TM20} is defined as the action $H_{\operatorname{TB}}(w)\psi=((H_{\operatorname{TB}}(w)\psi)_n)_{n=-\infty}^\infty$ where
\begin{equation}\label{eq:Harper}
 (H_{\operatorname{TB}}(w)\psi)_n:=\mathbf t(k_{\perp}) \psi_{n+1} +\mathbf t(k_{\perp}) \psi_{n-1}  + (\mathbf t_0 + V_w(n/L))\psi_n.
\end{equation}
Here, $L$ is the length of a unit period of the moir\'e pattern (i.e., the length of the fundamental cell of the pattern's one-dimensional lattice structure); we call $L$ the moir\'e length. $L$ is related to the strength of the strain, and $L\to\infty$ as the strength of the strain tends to zero.

Denote the Pauli matrices by $\sigma_i$ for $i \in \{1,2,3\}$.
Then the kinetic part is the discrete Dirac operator which, for $\gamma_{15} = \operatorname{diag}(\sigma_1,\sigma_1)$ and $\gamma_{25}= \operatorname{diag}(\sigma_2,\sigma_2)$,  is defined as 
 \[ (D_{\operatorname{kin}}(k_{\perp})\psi)_n = \mathbf t(k_{\perp}) \psi_{n+1} + \mathbf t(k_{\perp}) \psi_{n-1} +\mathbf t_0 \psi_n,\]
 where 
\begin{equation}
\mathbf t(k_{\perp}) =  (\cos(2\pi k_{\perp}) \gamma_{15}+ \sin(2\pi k_{\perp}) \gamma_{25}),\qquad\qquad \mathbf t_0 = \gamma_{15}.
\end{equation} 
This parameter configuration corresponds to the armchair configuration in \cite{TM20} resulting from antisymmetric shear strain (shown in the left panel of Figure \ref{fig:one}). Uniaxial strain leads to a similar model.
The honeycomb lattice consists of two types of atoms per fundamental cell, denoted A and B, respectively. The full potential $V_w$ is defined in terms of an anti-chiral potential (ac), describing the interaction between atoms A/A, B/B of the honeycomb lattice, and a chiral potential (c), describing the interaction between atoms A/B, B/A of the honeycomb lattice, of the form
 \[ V_{\operatorname{ac}}= U_{\operatorname{ac}} \begin{pmatrix} 0 & \operatorname{id} \\ \operatorname{id} & 0 \end{pmatrix}, \ V_\ch = \begin{pmatrix} 0 & W \\ W^* & 0 \end{pmatrix} \text{ with } W= \begin{pmatrix} 0 & U_\ch^- \\ U_\ch^+ & 0 \end{pmatrix} ,\]
 where $U_\mathrm{ac}(x)=1+2\cos(2\pi x)$ and $U^\pm_\mathrm{c}(x) = 1- \cos(2\pi x) \pm \sqrt 3 \sin(2\pi x)$.  
Then
 \[ V_w(x) = w_0 V_{\operatorname{ac}}(x) + w_1 V_{\operatorname{c}}(x),\quad w=(w_0,w_1)\in\RR_+^2. \]
 Interaction between atoms A/A, B/B where the two honeycomb lattices are aligned is called AA stacking, while interaction between atoms A/B, B/A are called AB and BA stacking, respectively. The stacking configurations are illustrated in Figure \ref{fig:config} for the two types of 1D moiré patterns shown in Figure \ref{fig:one}. 
 \begin{figure}
\includegraphics[width=0.98\textwidth]{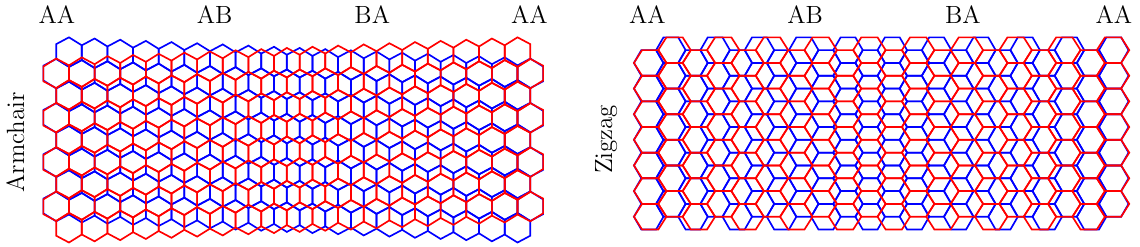}
\caption{\label{fig:config} Different stacking configurations (AA, AB, BA) for superpositions of honeycomb lattices under strain. On the left, both the red and the blue lattice is subject to antisymmetric shear strain, while on the right, the red lattice is subject to uniaxial strain in the horizontal direction.}
\end{figure}
In their paper on strained lattices, Timmel and Mele \cite{TM20} consider the full model and assume that $w_0=w_1$. We shall mainly be concerned with the chiral limit, obtained by setting $w_0=0$. While this may seem like a drastic simplification, the resulting model reproduces the spectrum close to zero of the full model quite well, see \S\ref{ss:flatbands} below and in particular Figure \ref{fig:comparison}.

To use the general framework developed for operators with degenerate potential wells we note that the discrete operator \eqref{eq:Harper} is unitarily equivalent to the pseudodifferential operator 
\begin{equation}\label{eq:HarperPDO}
 H_{\operatorname{\Psi DO}}(w)u(x):=(2\mathbf t(k_{\perp}) \cos(2\pi x)+\mathbf t_0  +V_w(hD_x))u(x)
 \end{equation}
acting on $L^2(\mathbb T;\CC^4)$, $\mathbb T=\RR/\ZZ$,
see Lemma \ref{lem:PsiDO}. Here, the semiclassical parameter is defined in terms of the moir\'e length via $h=(2\pi L)^{-1}$, i.e., we are concerned with the limit of large moir\'e lengths $L \gg 1$. In the chiral limit ($w_0=0$), conjugating $H_{\operatorname{\Psi DO}}(w)$ by a unitary matrix leads to a system on off-diagonal block form, thus effectively reducing the spectral analysis to a $2\times2$ system, see Lemma \ref{lem:reduction0Harper}. We show that the degenerate wells only appear for quasimomenta $k_\perp=0$ or $k_\perp=\frac12$. After describing the precise location of the wells in Proposition \ref{prop:degwellHarper}, we then use Corollary \ref{cor:periodicquasimodesnormalform} to obtain approximate eigenvalues and quasimodes to any order. We can do this for each of the degenerate wells, which appear periodically with period 1 in the fiber direction of phase space. Since the approximate eigenvalues are independent of the choice of well, this gives approximate eigenvalues of infinite multiplicity:
 
 \begin{thm}\label{cor:periodicquasimodes2}
Let $ H_{\operatorname{\Psi DO}}(w)$ be given by \eqref{eq:HarperPDO} and consider the chiral limit $w=(0,w_1)$ and quasimomentum $k_\perp=0$ or $k_\perp=\frac12$. For each $\xi_0=\pm \frac13(\frac12)^{2k_\perp} +n_0$, $n_0\in\ZZ$, and $j=1,2$ there are quasimodes $\{\psi_\pm^{(\xi_0,j)}(n)\}_{n\in\NN_0}\subset C^\infty(\mathbb T;\CC^4)$, normalized in $L^2(\mathbb T;\CC^4)$ and with $\WF_h(\psi_\pm^{(\xi_0,j)}(n))=\{(\xi_0,0)\}$, together with approximate eigenvalues $\{\pm\sqrt{\lambda^{(\xi_0,j)}(n)}\}_{n\in\NN_0}$ that are independent of $n_0\in\ZZ$, such that if $0<h<h_0(n)$ then
$$
\bigg( H_{\operatorname{\Psi DO}}(0,w_1)\mp\sqrt{\lambda^{(\xi_0,j)}(n)}\bigg) \psi_\pm^{(\xi_0,j)}(n)=\mathcal{O}(h^{\ell+1})
$$
in $L^2(\mathbb T;\CC^4)$ for any $\ell\in\NN_0$, where $\lambda^{(\xi_0,j)}(n) = h \sum_{i = 0}^{2\ell} h^{i/2} \lambda_{i}^{(\xi_0,j)}(n)$ mod $\mathcal O(h^{\ell+3/2})$ with $h^{(\xi_0,j)}_0(n)=(2n+1\pm(-1)^{2k_\perp+j-1})12\pi^2 w_1$. 
In particular, near zero energy $H_{\operatorname{\Psi DO}}(0,w_1)$ has approximate eigenvalues of infinite multiplicity given by
$$
\big\{\pm  \sqrt{24\pi^2w_1hn}+\mathcal{O}(h)\big\}_{n\in\NN_0}.
$$
\end{thm}

\subsection{Effective Hamiltonian}\label{ss:effective}
We shall also consider an effective low-energy model of \eqref{eq:Harper} introduced in \cite{TM20} for a moir\'e superlattice with antisymmetric shear strain. After a rescaling $x/L\mapsto x$, where $L$ is the moir\'e length, the model is described by the semiclassical operator  
\begin{equation}\label{eq:LEintro}
H^w(x,hD_x) = \begin{pmatrix} 0 & hD_x -i k_{\perp} & w_0 U(x) & w_1  U^-(x) \\ hD_x +i k_{\perp} & 0 & w_1 U^+(x) & w_0U(x)  \\ w_0 U(x) & w_1 U^+(x) & 0 & hD_x -i k_{\perp}   \\  w_1 U^-(x) & w_0 U(x) & h D_x +i k_{\perp} & 0 \end{pmatrix},
\end{equation}
with semiclassical parameter $h=1/L$, acting on $L^2(\RR;\CC^4)$. Here $k_\perp$ is the quasimomentum in the orthogonal periodic direction, and $U(x)=U_\mathrm{ac}(x)=1+2\cos(2\pi x)$ and $U^{\pm}(x)=U^\pm_\mathrm{c}(x) = 1- \cos(2\pi x) \pm \sqrt 3 \sin(2\pi x)$ as before. The kinetic differential operator is essentially a linearization in $\xi$ of a symbol associated with the discrete model \eqref{eq:Harper}. 
We denote the symbol of the chiral Hamiltonian, when $w_0=0$, by $H_\ch$ and the symbol of the anti-chiral Hamiltonian, when $w_1=0$, by $H_{\operatorname{ac}}$, respectively.

Since the potential in $H$ is 1-periodic in $x$, we can use the standard Bloch-Floquet transform to equivalently study the spectrum of $H^w(k_x)=H^w(x,hD;k_x)$ on $L^2(\mathbb T;\CC^4)$, where
\begin{equation}\label{eq:tm20}
  H^w(k_x) = \begin{pmatrix} 0 & hD_x+k_x -i k_{\perp} & w_0 U(x) & w_1  U^-(x) \\ hD_x+k_x +i k_{\perp} & 0 & w_1 U^+(x) & w_0U(x)  \\ w_0 U(x) & w_1 U^+(x) & 0 & hD_x+k_x -i k_{\perp}   \\  w_1 U^-(x) & w_0 U(x) & hD_x+k_x +i k_{\perp} & 0 \end{pmatrix}
\end{equation}
and
\begin{equation}\label{eq:BFunion}
\Spec(H^w) = \bigcup_{k_x \in [0,2\pi h]} \Spec(H^w(k_x)).
\end{equation}
Then both $H_{\Psi\mathrm{DO}}$ and $H^w(k_x)$ act on $L^2(\mathbb T;\CC^4)$ which allows for a unified treatment of both models. Note that $k_x=\mathcal O(h)$ so it does not contribute to the principal symbol of $H^w(k_x)$.

Similar to $H_{\operatorname{\Psi DO}}(w)$ we find in the chiral limit $w_0=0$ that conjugating $H_\ch^w(k_x)$ by a unitary matrix leads to a system on off-diagonal block form, see Lemma \ref{lem:reduction0}. We locate the degenerate potential wells in Proposition \ref{prop:degwellH}, which only appear for quasimomentum $k_\perp=0$, and apply Corollary \ref{cor:periodicquasimodesnormalform} to obtain approximate eigenvalues and quasimodes to any order. The proof (given at the end of \S\ref{ss:lowenergy}) shows that the obtained approximate eigenvalues of $H_\ch^w(k_x)$ are independent of $k_x$,
which by \eqref{eq:BFunion} leads to approximate eigenvalues of infinite multiplicity for $H_\ch^w$:

\begin{thm}\label{cor:periodicquasimodes}
Let $H^w$ and $H^w(k_x)$ be given by \eqref{eq:LEintro} and \eqref{eq:tm20}, respectively, and consider the chiral limit $w=(0,w_1)$ and quasimomentum $k_\perp=0$.
For each $j=1,2$ there are quasimodes $\{\psi_\pm^{(j)}(n)\}_{n\in\NN_0}\subset C^\infty(\mathbb T;\CC^4)$, normalized in $L^2(\mathbb T;\CC^4)$ and with $\WF_h(\psi_\pm^{(j)}(n))=\{(0,0)\}$, together with approximate eigenvalues $\{\pm\sqrt{\lambda^{(j)}(n)}\}_{n\in\NN_0}$, such that if $0<h<h_0(n)$ then
$$
\bigg( H_\ch^w(k_x)\mp\sqrt{\lambda^{(j)}(n)}\bigg) \psi_\pm^{(j)}(n)=\mathcal{O}(h^{\ell+1})
$$
in $L^2(\mathbb T;\CC^4)$ for any $\ell\in\NN_0$, where $\lambda^{(j)}(n) = h \sum_{i = 0}^{2\ell} h^{i/2} \lambda_{i}^{(j)}(n)$ mod $\mathcal O(h^{\ell+3/2})$ with $h^{(j)}_0(n)=((2n+1)+(-1)^j)2\pi\sqrt 3 w_1$. In particular, for quasimomentum $k_\perp=0$, the chiral limit of $H^w$ in \eqref{eq:LEintro} has approximate eigenvalues of infinite multiplicity given by
$$
\Big\{\pm  \sqrt{4\pi\sqrt 3w_1 hn}+\mathcal{O}(h)\Big\}_{n\in\NN_0}.
$$
\end{thm}

\subsection{Almost flat bands}\label{ss:flatbands}

A band of $H^w$ (as the word has been used above) refers to an eigenvalue of $H^w(k_x)$ as a function of $k_x$. This is justified by \eqref{eq:BFunion}.
It is a classical result that a flat band corresponds to an eigenvalue of infinite multiplicity \cite[Corollary 6.11]{kuchment2016overview}. The converse can be deduced in the same way if the operator is self-adjoint and depends analytically on the quasimomentum $k$ (Rellich's theorem). Existence of approximate eigenvalues for the chiral limit $H_\ch^w(k_x)$, as described in the previous subsection, then results in the bands close to zero energy being almost flat. This is illustrated in the left panel of Figure \ref{fig:bands}. For comparison, the right panel shows the bands of the anti-chiral limit, none of which are almost flat (see Appendix \ref{sec:antichiral}).

\begin{figure}
{\begin{tikzpicture}
\node at (-2,0) {\includegraphics[width=7.6cm]{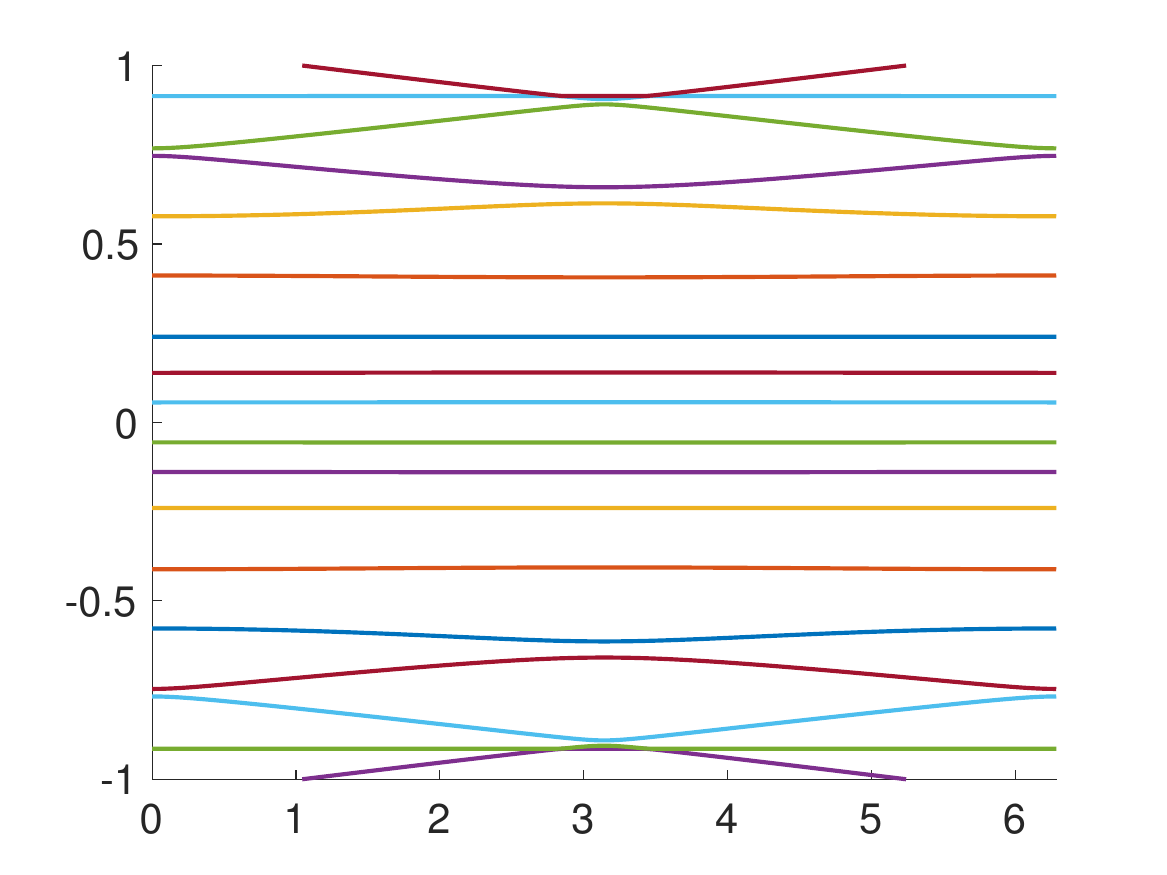}};
\node at (7.6-2,0) {\includegraphics[width=7.6cm]{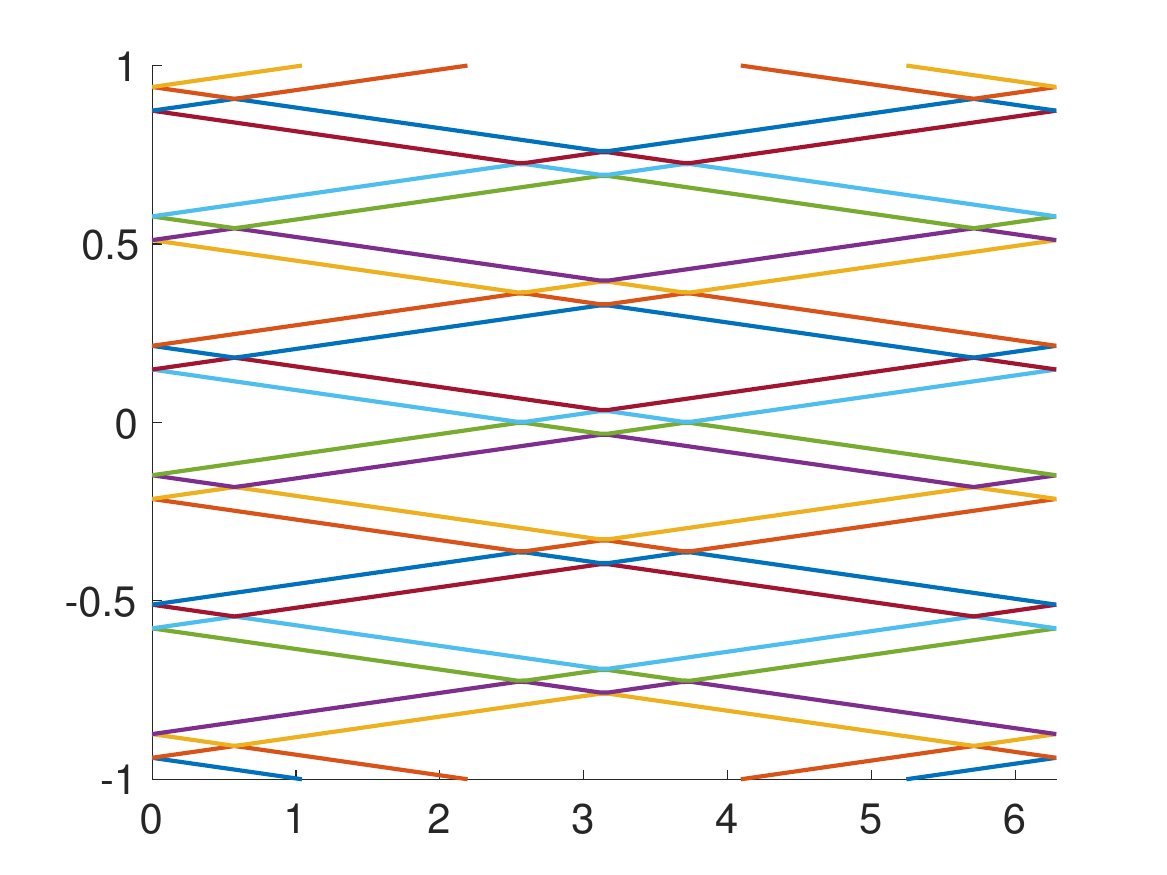}};
\node at (-2,-3.0) {$k_x/h$};
\node at (-2,3.0) {Chiral model};
\node at (-4-1.7,0) {$E$};
\node at (-2+7.6,3.0) {Anti-chiral model};
\node at (-4-1.7+7.6,0) {$E$};
\node at (7.6-2,-3.0) {$k_x/h$};
\end{tikzpicture}}
 \caption{Bands closest to zero energy for the low-energy model \eqref{eq:tm20} in the chiral limit (left) $(w_0,w_1)=(0,1)$ and anti-chiral limit (right) $(w_0,w_1)=(1,0)$ with $h=1/10$. The figure plots the quasimomentum $k_x/h$ on the $x$-axis and the respective eigenvalues of $H^w(k_x)$ on the $y$-axis.  The chiral model exhibits almost flat bands.\label{fig:bands}}
 \end{figure}

The corresponding notion of bands also exists for the discrete Hamiltonian \eqref{eq:Harper}. To see this, note that there is a version of Bloch-Floquet theory also for this model. Indeed, let $L=q/p$ for positive integers $p$ and $q$.  Then $H_{\operatorname{TB}}$ commutes with translations $\tau_q\psi_n=\psi_{n-q}$. The Bloch transform $\ell^2(\ZZ; \mathbb C^4) \ni (\psi_n)_{n\in\ZZ}\to (k_x \mapsto (\phi_n(k_x))_{n \in \ZZ/q\ZZ} \in L^2(\mathbb R/(2\pi \ZZ);\mathbb C^4) \otimes \mathbb C^{q} $ is then defined for $k_x \in \RR/(2\pi /q)\ZZ$ by
\[ \phi_n(k_x) := \sum_{m \in \ZZ} \psi_{n-qm}e^{iqmk_x} \text{ with } n \in \ZZ/q\ZZ\]
and the Floquet transformed Hamiltonian $H_{\text{TB}}(k_x)$ takes the form
\[ H_{\text{TB}}(k_x)= \mathbf t(k_{\perp}) \otimes (J(k_x)+J(k_x)^*) + (\mathbf t_0  \otimes I_q +\mathcal V_w ),\]
where 
$$
J(k_x)=\begin{pmatrix} 0 & 0 & 0& e^{iqk_x} \\
1& 0 & 0 &0 \\
0& \ddots& 0 & 0 \\
0 & 0& 1 & 0 \end{pmatrix} \in \mathbb C^{q \times q}
$$
and the potential 
\[ \mathcal V_w=\begin{pmatrix} 0 & w_0  I_2 \\ w_0 I_2 & 0 \end{pmatrix} \otimes U_q+w_1 \begin{pmatrix} 0 & \tau^t \\ \tau & 0 \end{pmatrix} \otimes U_q^{+}+w_1 \begin{pmatrix} 0 & \tau \\ \tau^t & 0 \end{pmatrix} \otimes U_q^{-}\]
with matrices $U^{\pm}_q=\operatorname{diag}(U^{\pm}(jp/q))_{1\le j \le q}$, $U_{q}=\operatorname{diag}(U(j p/q))_{1\le j \le q}$, and $\tau=\begin{pmatrix} 0 & 1 \\ 0 & 0 \end{pmatrix}$. In particular, 
\begin{equation*}
\Spec(H_{\text{TB}}) = \bigcup_{k_x \in [0,2\pi/q ]} \Spec(H_{\text{TB}}(k_x)).
\end{equation*}
In the left panel of Figure \ref{fig:bands2} the corresponding bands close to zero energy are shown, demonstrating that they are indeed almost flat. The right panel shows the bands of the anti-chiral limit, discussed in Appendix \ref{sec:antichiral}.

\begin{figure}
{\begin{tikzpicture}
\node at (-2,0) {\includegraphics[width=7.6cm]{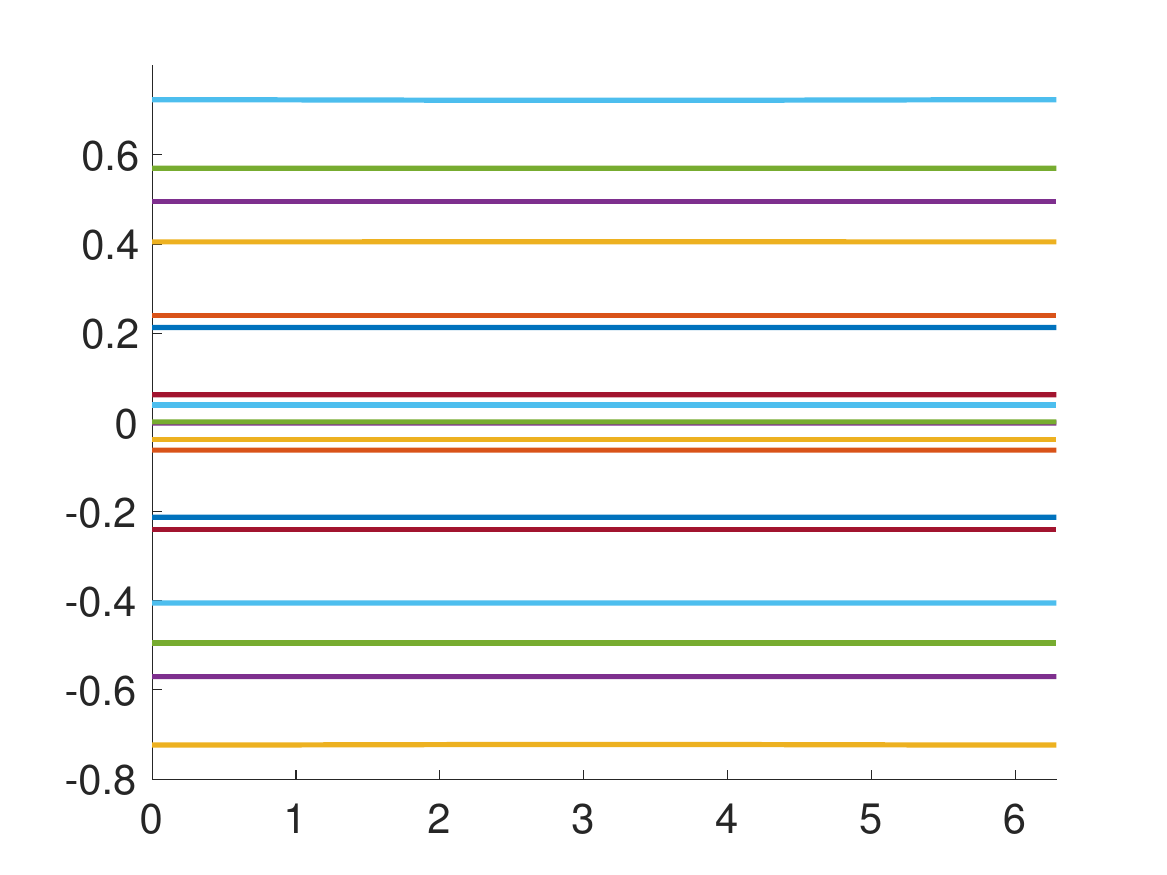}};
\node at (7.6-2,0) {\includegraphics[width=7.6cm]{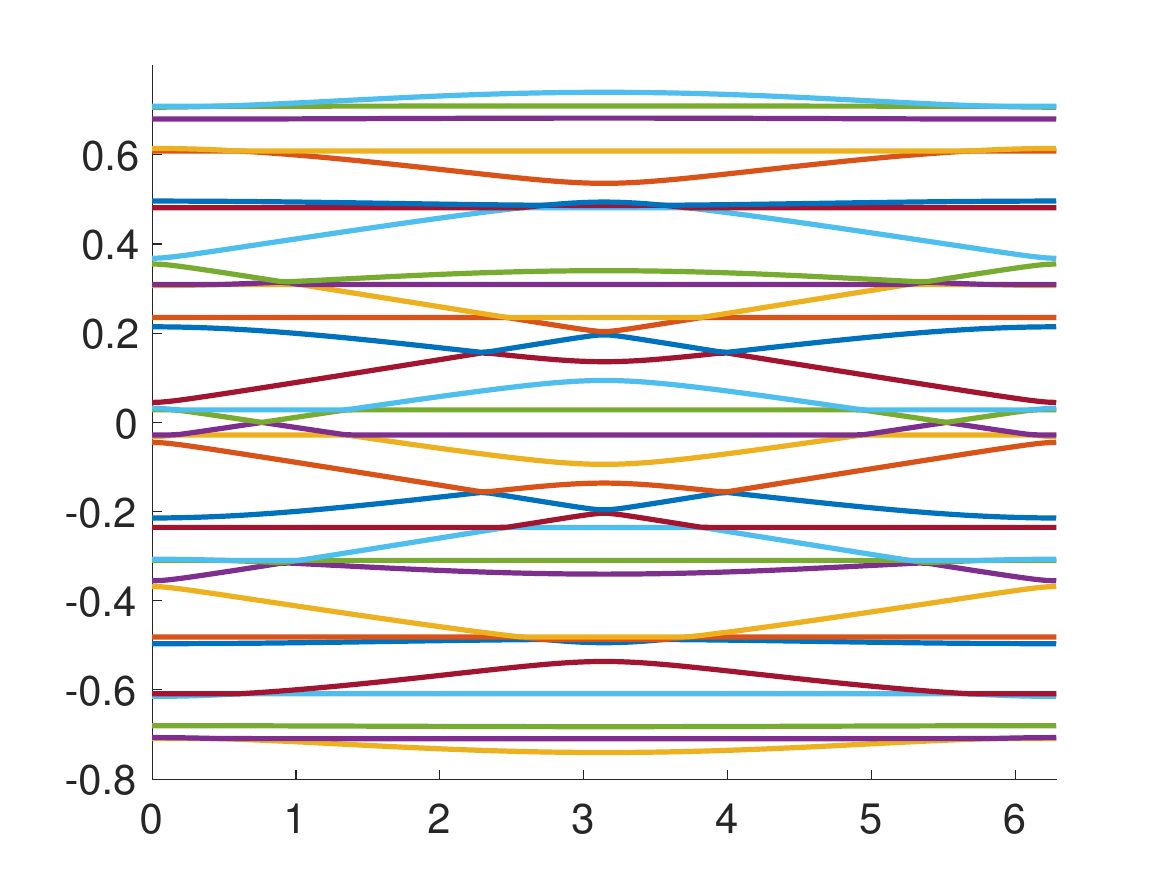}};
\node at (-2,-3.0) {$k_x\cdot q$};
\node at (-2,3.0) {Chiral model};
\node at (-4-1.7,0) {$E$};
\node at (-2+7.6,3.0) {Anti-chiral model};
\node at (-4-1.7+7.6,0) {$E$};
\node at (7.6-2,-3.0) {$k_x\cdot q$};
\end{tikzpicture}}
 \caption{Bands close to zero energy for the discrete model \eqref{eq:Harper} in the chiral limit (left) $(w_0,w_1)=(0,1)$ and anti-chiral limit (right) $(w_0,w_1)=(1,0)$ with $q=30,p=1$ and thus $L=30$. The figure plots the quasimomentum $k_x\cdot q$ on the $x$-axis and the respective eigenvalues of $H_{\operatorname{TB}}(k_x)$ on the $y$-axis. As in Figure \ref{fig:bands}, the chiral model exhibits almost flat bands.\label{fig:bands2}}
 \end{figure}

 The approximately flat bands exhibited by the chiral limits of $H^w$ and $H_{\mathrm{TB}}$ (cf.~the left panel of Figures \ref{fig:bands} and \ref{fig:bands2}) remain nearly flat for the full models for physically reasonable choices of hopping parameters $w=(w_0,w_1)$. For twisted bilayer graphene, it has been experimentally verified that the hopping strength ratio is $w_0/w_1\approx 0.7$-$0.8$ for small twisting angles. Using a similar ratio, Figure \ref{fig:comparison} displays the bands of the full models, showing that the spectrum close to zero energy is accurately reproduced by the corresponding chiral limits.

\begin{figure}
{\begin{tikzpicture}
\node at (-2,0) {\includegraphics[width=7.6cm]{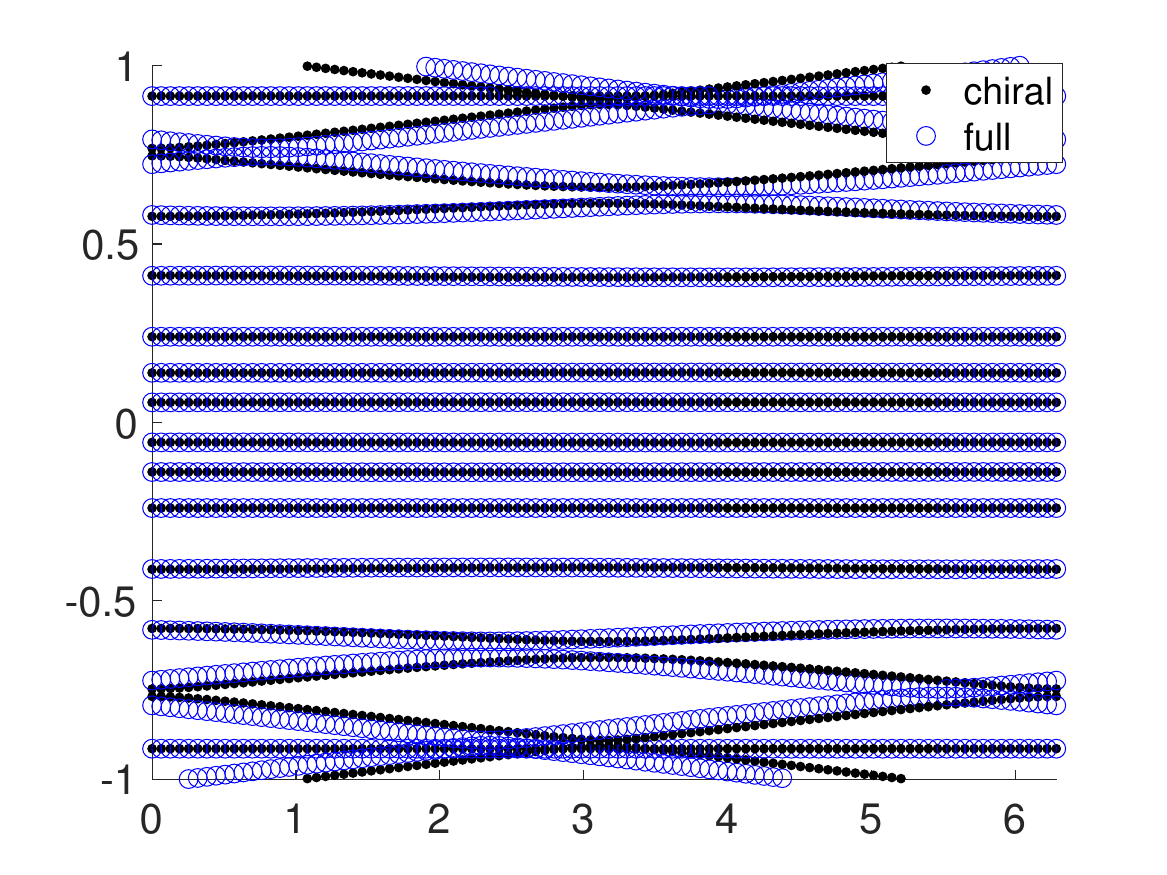}};
\node at (7.6-2,0) {\includegraphics[width=7.6cm]{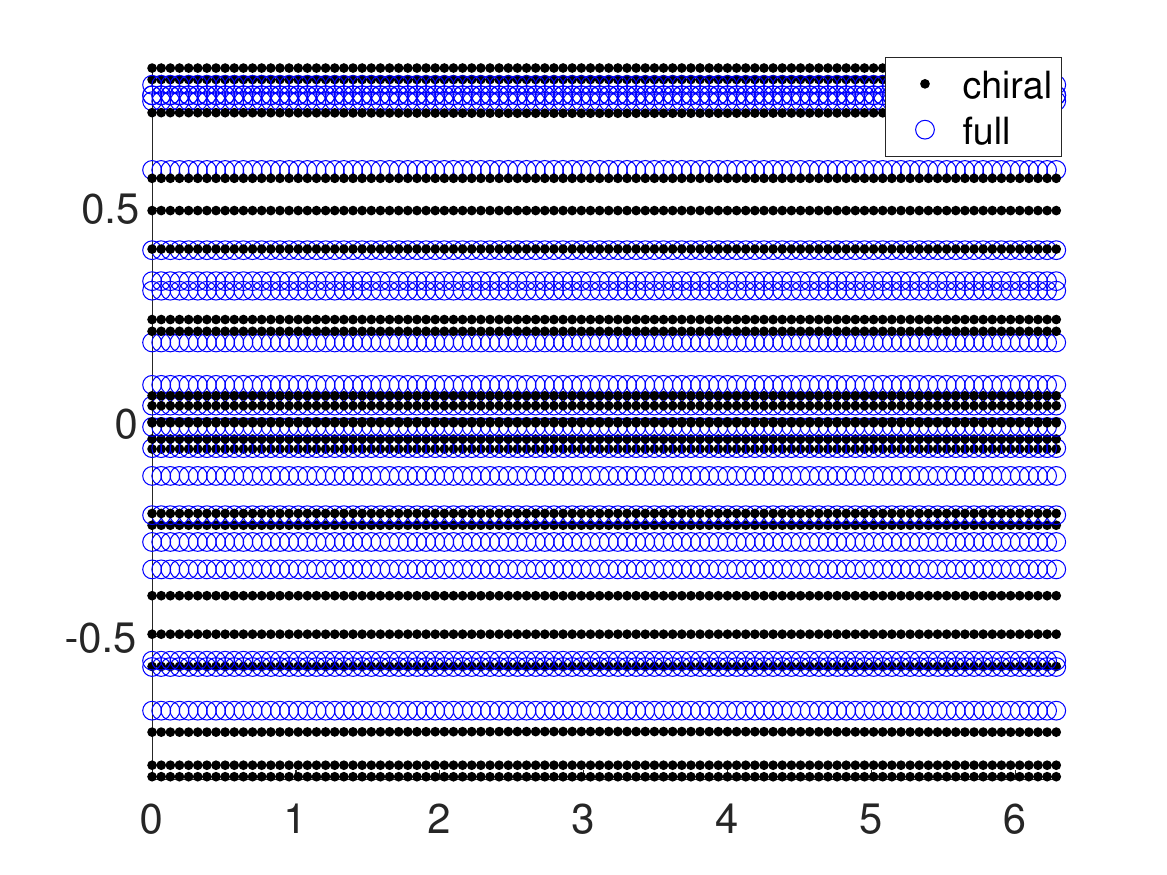}};
\node at (-2,-3.0) {$k_x/h$};
\node at (-2,3.0) {Chiral vs.~full -- low energy model};
\node at (-4-1.7,0) {$E$};
\node at (-4-1.7+7.6,0) {$E$};
\node at (-2+7.6,3.0) {Chiral vs.~full -- tight-binding model};
\node at (7.6-2,-3.0) {$k_x\cdot q$};
\end{tikzpicture}}
 \caption{Comparison of bands close to zero energy for the chiral limit $(w_0,w_1)=(0,1)$ of the low-energy model \eqref{eq:tm20} and the full low energy model $(w_0,w_1)=(0.8,1)$ (left) with $h=1/10$ showing an impressive agreement. Chiral limit of the tight-binding model and full tight-binding model for $L=30$ with $p=1$ and $q=30$  (right). \label{fig:comparison}}
 \end{figure}

\subsection{Analysis close to rational moir\'e lengths and open questions}
The previous semiclassical description provides a representation of the operator model essentially for large moir\'e lengths. We can also find a semiclassical description close to arbitrary commensurable length scales. To translate this spectral problem near any commensurable $h$, the key proposition is

\begin{prop}\label{prop:commensurable}
Let $h = \frac{p}{q} +h'$, then there is a unitary transformation $\mathcal U_q: L^2(\mathbb T;\mathbb C^4) \rightarrow L^2(\mathbb T;\mathbb C^4) \otimes \mathbb C^q$ such that the Hamiltonian in \eqref{eq:HarperPDO} satisfies
$$(\mathcal U_q H_{\operatorname{\Psi DO}} \mathcal U_q^*)(x,h'D_x) = \mathbf t(k_{\perp})\otimes (e^{2\pi ix}J_{q,p}^*+e^{-2\pi ix}J_{q,p})+\mathbf t_0 \otimes I_q  + \tilde{V}_w(h'D_x),$$
where $J_{q,p}:=J_q^p$ with
\[J_q=\operatorname{diag}(1,\gamma,\ldots, \gamma^{q-1})\text{ with }\gamma=e^{2\pi i /q},\text{ and } (K_q)_{jk} =\begin{cases} 1& \text{if  $k\equiv j+1 \ \mathrm{mod}$ $q$,} \\
0& \text{otherwise.} \end{cases}\]
Here, $\hat{V}_w$ is defined as $V_w$ but with matrix-valued self-adjoint potentials
$$\hat{U}(h'D_x) = I_q+ e^{2\pi i h' D_x} K_q + e^{-2\pi i h' D_x} K_q^*$$
$$\hat{U}^{\pm}(h'D_x) = I_q-\frac{e^{2\pi i h' D_x}K_q +e^{-2\pi i h' D_x}K_q^*}{2}  \pm \sqrt 3 \left(\frac{e^{2\pi i h' D_x}K_q -e^{-2\pi i h' D_x}K_q^*}{2i} \right).$$
\end{prop}
\begin{proof}
Consider the unitary map $\mathcal U_q: L^2(\mathbb T ;\mathbb C^4) \rightarrow L^2(\mathbb T;\mathbb C^4) \otimes \mathbb C^q$, defined by
\[ (\mathcal U_q u) = \operatorname{diag}(u,T^1 u ,\ldots,T^{q-1} u)\] 
with $(T u)(x):= u(x-\tfrac{ p}{q}).$ For $u(x)=e^{-2\pi ix}$, this map satisfies
\begin{equation*}
\begin{split}
\mathcal U_qu \mathcal U_q^* = u J_q^p\quad\text{ and }\quad
\mathcal U_q e^{-2\pi i hD_x} \mathcal U_q^* = e^{-2\pi i h'D_{x}} K_q^{*}.
\end{split}
\end{equation*}
The result then follows immediately, as the operator consists of such primitive Fourier modes.
 \end{proof}
A more detailed analysis of this model close to commensurable moir\'e lengths is an open problem and should be compared to the magnetic case \cite{helffer1990analyse}. Using the results of Proposition \ref{prop:commensurable}, it is possible to show that for example for $p=1,q=2$, the chiral Hamiltonian also exhibits a potential well at zero energy.

\subsection*{Outline of the article}
In Section \ref{sec:pdo}, we briefly recall relevant background on semiclassical pseudodifferential operators. Section \ref{sec:wellsnormalform} contains our analysis of the spectral asymptotics for systems exhibiting a potential well. In Section \ref{sec:chiral}, we then apply the spectral asymptotics derived in the previous section to the chiral Hamiltonian of the pseudodifferential Harper model \eqref{eq:HarperPDO} and of the low-energy model \eqref{eq:tm20}.
The article also contains an appendix which consists of Section \ref{sec:aux_results} where we prove auxiliary results used in the proofs of Section \ref{sec:wellsnormalform},
and Section \ref{sec:antichiral} where we  for comparison discuss the anti-chiral limits of models \eqref{eq:HarperPDO} and \eqref{eq:tm20}. In the former model, there are various quasimodes at potential wells located at different energy levels, but not necessarily at zero, see  Theorem \ref{thm:Harperantichiral}. The gap-condition \eqref{eq:gap_condition} fails, but the operator is diagonalizable so the scalar results of Theorem \ref{theo:DiSJ} apply directly. In the latter model, there are no wells at all, see Remark \ref{rem:achLEM}. The bands near zero energy of the anti-chiral limits of each corresponding model are shown in the right panels of Figures \ref{fig:bands} and \ref{fig:bands2} for comparison.
Spectral aspects of these models will be discussed in the forthcoming article \cite{becker2022hofstadter}.

\section{Semiclassical pseudodifferential operators}\label{sec:pdo}

\subsection*{Notation}
We denote by $H^m(\RR^n)$ the Sobolev space of order $m$.
The Pauli matrices are denoted by $\sigma_i$ for $i \in \{1,2,3\}.$ Recall that the Kohn-Nirenberg symbol class $S^m(\RR\times\RR)$ is the set of all $a\in C^\infty(\RR\times\RR)$ such that
\begin{equation*}
\lvert \partial_\xi^j\partial_x^k a(x,\xi)\rvert\le C_{jk}(1+\lvert\xi\rvert)^{m-j},\quad j,k\ge0.
\end{equation*}
By $S_\delta^{m,k}$ we denote the class of symbols $p$ such that
$$
\lvert \partial_x^\alpha\partial_\xi^\beta p(x,\xi)\rvert\le C_{\alpha\beta} h^{-(\alpha+\beta)\delta} h^{-m}\langle\xi\rangle^{k-\beta},\quad \alpha,\beta\in\NN_0.
$$
We let $\Psi^{m}(\RR)$ and $\Psi_{\delta}^{m,k}(\RR)$ denote the corresponding class of semiclassical operators, and recall that if $a^w\in \Psi_{\delta}^{m,k}$ and $b^w\in \Psi_{\delta}^{m'\!,k'}$ then $a^wb^w\in \Psi_{\delta}^{m+m'\!,k+k'}$.

We identify functions on $T^*\TT=\TT\times\RR$ with functions on $T^*\RR=\RR\times\RR$ that are 1-periodic in the base variable $x$. The class $S^m(T^\ast\mathbb T)$ is identified with the subset of $S^m(T^*\RR)$ consisting of the symbols that are 1-periodic in $x$. Similarly, we identify symbols in $C^\infty(T^*\TT)$ belonging to $S(1)$ with symbols in $C^\infty(T^*\RR)$ that are 1-periodic in $x$ and belong to $S(1)$. 
We shall also need the symbol classes $S(m)$ where $m$ is an order function of the type
\begin{equation*}
m(y,\eta)=(1+\lvert y\rvert^2+\lvert \eta\rvert^2)^{\nu/2}
\end{equation*}
for some $\nu\ge0$, consisting of $a\in C^\infty(T^*\RR)$ such that
$\lvert\partial_\eta^\alpha\partial_y^\beta a(y,\eta)\rvert\le C_{\alpha\beta} m(y,\eta)$ for all $\alpha,\beta\in\NN_0$. For such $m$ we usually just write $S(\langle (y,\eta)\rangle^\nu)$, where we use the notation $\langle t\rangle=(1+\lvert t\rvert^2)^\frac12$ for $t\in\RR$.
All these symbol classes generalize in the natural way to $n\times m$ systems $a\in C^\infty(\RR\times\RR;\CC^{n\times m})$ and we shall not emphasize the size $\CC^{n\times m}$ in the notation. Usually we will also simply write e.g.~$L^2(\mathbb T)$ instead of $L^2(\mathbb T;\CC^d)$ when the vector dimension is clear from context.

\subsection{Pseudodifferential calculus on $\mathbb T$}
\label{subsec:pseudo}
In this subsection, we provide the relevant background on semiclassical pseudodifferential operators on $\mathbb T$ (see \cite[Section 5.3]{zworski} for a detailed exposition).
Let $\mathbb T=\RR/\ZZ$ and identify $\mathbb T$ with the fundamental domain $[0,1)$. Functions $u\in L^2(\mathbb T)$ are identified with periodic functions on $\RR$ with period 1. Symbols $a$ on $T^*\TT$ are identified with symbols $a(x,\xi)$ on $T^*\RR$ that are 1-periodic in $x$. The standard quantization of such a symbol is the semiclassical operator $A(h)=a(x,hD)$
acting on 1-periodic functions via
$$
A(h)u(x)=\frac{1}{2\pi h}\int_{T^*\RR} e^{i(x-y)\xi/h}a(x,\xi)u(y)\,dy\,d\xi
=\frac{1}{2\pi }\int_{T^*\RR} e^{i(x-y)\xi}a(x,h\xi)u(y)\,dy\,d\xi,
$$
interpreted in the weak sense. Note that a periodic function $u$ identified with $u\in L^2(\TT)$ belongs to the space $\mathscr S'(\RR)$ of tempered distributions, so this action is well-defined. It is easy to check that if $u(x)$ is 1-periodic then
\begin{equation}\label{eq:per}
A(h)u(x+k)=(a(x,hD)u(\bullet+k))(x)=A(h)u(x),\quad k\in\ZZ,
\end{equation}
so $A(h)$ preserves periodicity, and thus defines an operator on $\TT$. If, say, $a\in S(1)$, then $A(h):L^2(\mathbb T)\to L^2(\mathbb T)$ is bounded, see \cite[Theorem 5.5]{zworski}.

Using the standard quantization above, we may express the action of $A(h)$ in terms of Fourier coefficients: If $u$ is 1-periodic, write $u(y)=\sum_{n\in\ZZ} e^{2\pi iny}u_n$ where $u_n=\int_0^1 e^{-2\pi iny}u(y)\,dy$. Inserting this into the definition of $A(h)u(x)$ we obtain
$$
A(h)u(x)=\sum_{n\in\ZZ} u_n\int_\RR e^{ix\xi/h}a(x,\xi)\bigg(\frac{1}{2\pi h}\int_\RR e^{iy(2\pi nh-\xi)/h}\,dy\bigg)d\xi,
$$
where $\frac{1}{2\pi h}\int_\RR e^{iy(2\pi nh-\xi)/h}\,dy=\delta(2\pi nh-\xi)$ is the Dirac mass at $2\pi nh-\xi$. Hence,
\begin{equation}\label{eq:actionFC}
A(h)u(x)=
\sum_{n\in\ZZ} e^{2\pi inx}a(x,2\pi nh)u_n.
\end{equation}

\begin{rmks}
1. If we instead use the Weyl quantization, defined for a symbol $a(x,\xi)$ as
$$
a^w(x,hD)u(x)=\frac{1}{2\pi h}\int e^{i(x-y)\xi/h}a((x+y)/2,\xi)u(y)\,dy\,d\xi,
$$
then $a^w(x,hD)u(x)$ is still $1$-periodic, but formula \eqref{eq:actionFC} needs to be altered and becomes more involved in general. However, in the special case that $a$ is a linear combination of functions that only depend on either $x$ or $\xi$, so that $a(x,\xi)=a_0(x)+a_1(\xi)$, it is easy to see that we similarly get
\begin{equation*}
a^w(x,hD)u(x)=\sum_{n\in\ZZ} e^{2\pi inx}a(x,2\pi nh)u_n
\end{equation*}
for such operators. (Using the correspondence between different quantizations we have that
$a^w(x,hD)u(x)=\sum_{n\in\ZZ}e^{2\pi i nx}(e^{\frac{i}2hD_xD_\xi}a)(x,2\pi nh)u_n$ in the general case.)

2. \label{rmk2} Let $a$ be symbol on $T^*\TT$, identified with a symbol $a(x,\xi)$ on $T^*\RR$ that is 1-periodic in $x$, and suppose that $a(x,\xi)\in S(1)$. Then $a^w$ defines an operator on $L^2(\TT)$, understood as acting on the space of 1-periodic functions equipped with the norm in $L^2([0,1))$. Naturally, symbols in $S(1)$ also give rise to operators on $L^2(\RR)$, so we may also view $a^w$ as an operator
$a^w:L^2(\RR)\to L^2(\RR)$
by changing the domain. The same is true if $a\in S^m$ as long as we interpret $a^w$ on $L^2$ as a densely defined operator when $m>0$. This can be said to be the viewpoint in Theorem 1.3, should the symbol $P(x,\xi)$ in the statement happen to be 1-periodic in $x$.
\end{rmks}

We now show that $H_{\operatorname{TB}}$ is unitarily equivalent to the semiclassical pseudodifferential operator $H_{\operatorname{\Psi DO}}(w)$ in \eqref{eq:HarperPDO}.

\begin{lemm}\label{lem:PsiDO}
Let $H_{\operatorname{\Psi DO}}(w)$ be as in \eqref{eq:HarperPDO} with $h=(2\pi L)^{-1}$ and set
\begin{equation}
\label{eq:a}
a(x,\xi)=2\mathbf t(k_{\perp}) \cos(2\pi x)+\mathbf t_0+V_w(\xi).
\end{equation}
Then the discrete operator \eqref{eq:Harper} is unitarily equivalent to the pseudodifferential operator $H_{\operatorname{\Psi DO}}(w)=a^w(x,hD):L^2(\mathbb T)\to L^2(\mathbb T)$.
\end{lemm}

\begin{proof}
Let $\psi=(\psi_n)_{n=-\infty}^\infty\in \ell^2(\ZZ)$ and set $\Psi(x)=\sum_{n\in\ZZ} e^{2\pi inx}\psi_n$ so that $\psi_n$ is the $n$:th Fourier coefficient of $\Psi$. With $\psi=(\psi_k)_{k\in\ZZ}$, \eqref{eq:Harper} then gives rise to the action
\begin{align*}
A(w)\Psi(x)&=\sum_{n\in\ZZ}e^{2\pi inx} (H_\mathrm{TB}(w)\psi)_n
\\&=\sum_{n\in\ZZ}e^{2\pi inx} (\mathbf t(k_{\perp}) \psi_{n+1} +\mathbf t(k_{\perp}) \psi_{n-1}  + (\mathbf t_0 + V_w(n/L))\psi_n)
\end{align*}
which we rewrite as
$$
A(w)\Psi(x)=\sum_{n\in\ZZ}e^{2\pi inx}  (2\mathbf t(k_{\perp}) \cos(2\pi x)+\mathbf t_0+V_w(2\pi n h))\psi_n,
$$
where $h=(2\pi L)^{-1}$. In view of the remark above we may interpret this as the action of the semiclassical operator defined as the Weyl quantization $A(w)=a^w(x,hD)$ of the symbol $a$ given by \eqref{eq:a}, where we have suppressed the dependence on $w=(w_0,w_1)$ for simplicity. By the definition of $V_w$ we see that $a$ is a bounded smooth function which implies that $a^w(x,hD)$ is bounded on $L^2(\mathbb T)$, and the lemma follows.
\end{proof}

\section{Quasimodes near degenerate wells}\label{sec:wellsnormalform}

The purpose of this section is to study quasimodes of $P^w$ when $P^w$ has a 
potential well in the
sense of Definition \ref{def:degeneratewell}, and prove Theorem \ref{thm:expansionsintro} and Corollary \ref{cor:periodicquasimodesnormalform}.
We therefore let $P^w(x,hD)$ be a $2\times 2$ system of self-adjoint semiclassical operators with matrix valued symbol $P\in C^\infty(T^*\mathbb R)$. We shall assume that $P$ has an expansion $P\sim \sum_{k=0}^\infty h^kP_k$ where either $P_k\in S(1)$ for $k\ge0$ or $P_k\in S^{2-k}(T^*\mathbb R)$ for $k\ge0$. (The former implies that $P\in S(1)$ and the latter that $P\in S^2(T^*\mathbb R)$.) In both cases $P-\sum_{k=0}^Nh^kP_k=\mathcal O_{S(1)}(h^{N+1})$ for all $N\ge1$.
As explained in the introduction, we will assume that $a=1$ and $x_0=0$ in \eqref{eq:degeneratewell}, so that $P^w$ has a 
potential well at $(0,\xi_0)$. When proving Corollary \ref{cor:periodicquasimodesnormalform} we will in addition assume that $P(x,\xi)$ is 1-periodic in $x$, and identify $P$ with a symbol on $T^*\mathbb T$.

\subsection{Normal form}
If $P_0$ has a degenerate potential well then the gap condition \eqref{eq:gap_condition} is clearly violated. We shall therefore have to study the spectrum of $P^w$ by another approach, the first step of which is to obtain a suitable normal form. 

We begin with a general discussion and first recall the standard rescaling, so suppose that $p\in S^m(T^\ast\mathbb R)$ and make the change of variables $y=h^{-\frac12}x$. Then $p^w(x,hD_x)u(x)=p_h^w(y,D_y)v(y)$ where $u(x)=v(y)$ and the Weyl quantization of $p_h(y,\eta)=p(h^\frac12 y,h^\frac12\eta)$ is understood to be non-semiclassical, i.e.,
$$
p_h^w(y,D_y)v(y)=\frac{1}{2\pi }\int e^{i(y-t)\eta} p_h((y+t)/2,\eta) v(t)\,dt\,d\eta.
$$
In other words, if $\gamma^\ast v=v\circ \gamma$ denotes pullback by $\gamma(x)=h^{-\frac12}x$, and $P(h)=p^w(x,hD_x)$, then $p_h^w(y,D_y)=(\gamma^{-1})^*\circ P(h)\circ\gamma^\ast $. 
If $0<h<1$ then
$$
\lvert \partial_\eta^j\partial_y^k p_h(y,\eta)\rvert
\le C_{jk} h^{j/2}(1+\lvert h^\frac12\eta\rvert)^{m-j},\quad j,k\ge0,
$$
and if $m\ge0$ then standard calculus shows that the right-hand side is bounded by $C_{jk}(1+\lvert\eta\rvert)^{m-j}$ so 
\begin{equation*}\label{eq:hboundedset}
p(x,\xi)\in S^m(\RR_x\times\RR_\xi)\quad\Longrightarrow\quad p_h(y,\eta)\in S^m(\RR_y\times\RR_\eta),\quad 0<h<1,
\end{equation*}
uniformly.

If $p\in S(1)$ then the same calculations show that $p_h\in S(1)$ uniformly for $0<h<1$. We then have the following normal form.

\begin{prop}\label{prop:normalform}
Let $P\in C^\infty(T^*\RR)$ with $P(x,\xi)\sim\sum_{j=0}^\infty h^jP_j(x,\xi)$, where either $P_j\in S(1)$, $j\ge0$ or $P_j\in S^{2-j}(T^*\mathbb R)$, $j\ge0$.
Assume that $P_0$ has a potential well at $(0,\xi_0)$ in the sense of Definition \ref{def:degeneratewell} and that $P_1(0,\xi_0)=\diag(\mu_1,\mu_2)$ with $\mu_1,\mu_2\in\RR$. Then
\begin{equation}\label{eq:normalformpullback}
P^w(x,hD)=\gamma^*\circ\mathcal T\circ(\gamma^{-1})^*,\qquad \gamma(x)=h^{-\frac12}x,
\end{equation}
where $\mathcal T$ is an operator of the form
\begin{equation}\label{eq:normalformoperator}
\mathcal Tv(y)= e^{i\xi_0 y/\sqrt h}T^w(y,D)(e^{-i\xi_0\bullet/\sqrt h}v)(y)
\end{equation}
and $T^w$ is a $2\times 2$ self-adjoint matrix-valued system with expansion
\begin{equation}\label{eq:Texpansion}
T(y,\eta)=\sum_{j=0}^kh^{(j+2)/2}T_j(y,\eta)+h^{(k+3)/2}R_k(y,\eta;h),
\end{equation}
where
\begin{equation}\label{eq:T0}
T_0(y,\eta) = \diag(\eta^2+\omega^2y^2+\mu_1,\eta^2+\omega^2y^2+\mu_2)\quad\text{ with } \omega>0, 
\end{equation}
and where $T_j\in S(\langle(y,\eta)\rangle^{j+2})$ for $j\ge0$ and $R_k(h)\in S(\langle(y,\eta)\rangle^{k+3})$ uniformly for $0<h<1$. Moreover, if $P(x,\xi)$ is periodic in $x$ with period 1 then $T(y,\eta)$ is periodic in $y$ with period $h^{-\frac12}$.
\end{prop}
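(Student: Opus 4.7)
The plan is to combine two elementary changes of variables with a Taylor expansion at the well.

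First, I would perform the rescaling already discussed just before the statement: setting $\gamma(x)=h^{-1/2}x$, pulling back gives $(\gamma^{-1})^\ast P^w(x,hD_x)\gamma^\ast=P_h^w(y,D_y)$ where $P_h(y,\eta)=P(h^{1/2}y,h^{1/2}\eta;h)$ and the Weyl quantization on the right is non-semiclassical. Next, I would move the well from $(0,\xi_0)$ to the origin by translating the fiber variable. Since conjugation by $M=e^{i\xi_0 y/\sqrt h}$ acts on $D_y$ as $M^{-1}D_yM=D_y+\xi_0/\sqrt h$, the standard Weyl calculus gives that $T^w(y,D_y):=M^{-1}P_h^w(y,D_y)M$ has symbol
\begin{equation*}
T(y,\eta)=P_h(y,\eta+\xi_0/\sqrt h)=P(h^{1/2}y,\ h^{1/2}\eta+\xi_0;\ h).
\end{equation*}
Defining $\mathcal T=MT^w M^{-1}$ then yields \eqref{eq:normalformoperator} and \eqref{eq:normalformpullback}. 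The periodicity of $T$ in $y$ with period $h^{-1/2}$ is immediate from this formula and the $1$-periodicity of $P(\cdot,\xi;h)$ inherited from $C^\infty(T^\ast\Sone)$.

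Second, I would Taylor expand $T$ at $(y,\eta)=(0,0)$. Writing $P\sim\sum_{k\ge0}h^kP_k$ and expanding each $P_k$ at $(0,\xi_0)$ in powers of $(h^{1/2}y,h^{1/2}\eta)$, the contribution of a monomial of multi-index $\alpha$ from $P_k$ carries the weight $h^{k+|\alpha|/2}=h^{(j+2)/2}$ with $j=2k+|\alpha|-2$. Collecting these gives the prescribed expansion \eqref{eq:Texpansion} with
\begin{equation*}
T_j(y,\eta)=\sum_{2k+|\alpha|=j+2}\frac{1}{\alpha!}\partial_{(x,\xi)}^\alpha P_k(0,\xi_0)\,y^{\alpha_1}\eta^{\alpha_2},
\end{equation*}
which is a matrix-valued polynomial of degree $\le j+2$, hence lies in $S(\langle(y,\eta)\rangle^{j+2})$. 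For $j=0$ only the pairs $(k,|\alpha|)=(0,2)$ and $(1,0)$ occur. Since $P_0$ is positive semi-definite with a zero at $(0,\xi_0)$, Taylor's formula together with \eqref{eq:degeneratewell} (with $a=1$, $x_0=0$) gives $P_0(0,\xi_0)=0$, $\nabla P_0(0,\xi_0)=0$, and Hessian $\tfrac12\partial_{\xi\xi}P_0=\id_2$, $\tfrac12\partial_{xx}P_0=b\id_2$, $\partial_{x\xi}P_0=0$ at $(0,\xi_0)$; combined with $P_1(0,\xi_0)=\diag(\mu_1,\mu_2)$ this gives \eqref{eq:T0} with $\omega=\sqrt b>0$.

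The last and technically most delicate step is the uniform-in-$h$ remainder estimate $R_k\in S(\langle(y,\eta)\rangle^{k+3})$. The bound follows from the integral form of Taylor's remainder once one controls $\partial^\alpha P(h^{1/2}y,h^{1/2}\eta+\xi_0;h)$ for $|\alpha|=k+3$ uniformly. In the $S(1)$ case this is immediate: each derivative of each $P_k$ is globally bounded, so the remainder already carries the desired factor $h^{(k+3)/2}$ times a polynomial of degree $k+3$ in $(y,\eta)$. In the $S^{2-j}(T^\ast\Sone)$ case one has to absorb the weight $\langle\xi\rangle^{2-j-|\alpha|}$ from the symbol estimate; here the same computation as in \eqref{eq:hboundedset} (monotonicity in $h$ of $h^{|\alpha|/2}\langle h^{1/2}\eta+\xi_0\rangle^{2-j-|\alpha|}$ for $0<h<1$) yields a uniform polynomial bound in $\langle(y,\eta)\rangle$, and the asymptoticity $P-\sum_{k\le N}h^kP_k=\mathcal O_{S(1)}(h^{N+1})$ handles the tail of the series in $k$. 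This bookkeeping, rather than any conceptual difficulty, is the main obstacle: one must verify that the Taylor remainders from each $P_k$ and the asymptotic tail of the expansion of $P$ combine to give a single symbol in $S(\langle(y,\eta)\rangle^{k+3})$ uniformly as $h\to 0^+$.
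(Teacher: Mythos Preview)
Your proof is correct and follows essentially the same approach as the paper: conjugate by a fiber translation and rescale to obtain $T(y,\eta)=P(h^{1/2}y,h^{1/2}\eta+\xi_0;h)$, then Taylor expand at the origin and regroup by powers of $h^{1/2}$. The only cosmetic difference is that the paper performs the translation first (conjugating by $e^{ix\xi_0/h}$ in the $x$-variable) and rescales afterwards, whereas you rescale first and then conjugate by $e^{iy\xi_0/\sqrt h}$ in the $y$-variable; the two orderings commute and lead to the same symbol formula.
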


Note in particular that $T_0$ is just the symbol of a direct sum of harmonic oscillators which are perturbed and coupled to one another through terms appearing in $T_j,R_k$.

\begin{proof}
First note that for a symbol $p(x,\xi)$ we have after the symplectic change of variables $\zeta=\xi-\xi_0$ that
\begin{align*}
p^w(x,hD)u(x)
&=\frac{1}{2\pi h}\int e^{i(x-y)\zeta/h} e^{ix \xi_0 /h} p((x+y)/2,\zeta+\xi_0) e^{-i \xi_0 \bullet  /h}u(y)\,dy\, d\zeta\\
&= e^{ix \xi_0 /h} q^w(x,hD)( e^{-i \xi_0 \bullet  /h}u)(x)
\end{align*}
where $q(x,\zeta)=p(x,\zeta+\xi_0 )$. With $y=h^{-\frac12}x$ we have as above that $q^w(x,hD)u(x)=q_h^w(y,D)v(y)$ where $q_h(y,\eta)=p(h^{\frac12}y,h^{\frac12}\eta+\xi_0 )$ and $u(x)=v(y)$. 
Hence, 
$$
p^w(x,hD)u(x)=e^{iy \xi_0 /\sqrt h}q_h^w(y,D)(e^{-i \xi_0 \bullet /\sqrt h}v)(y)
$$
where $u(x)=v(y)$.

Applying this to $P^w(x,hD)$ we find that
$$
P^w(x,hD)u(x)=e^{iy \xi_0 /\sqrt h}T^w(y,D)(e^{-i \xi_0 \bullet /\sqrt h}v)(y)
$$
where $T(y,\eta)=P(h^{\frac12}y,h^{\frac12}\eta+\xi_0 )$ and $u(x)=v(y)$. Clearly, if $P(x,\xi)$ is 1-periodic in $x$ then $T(y,\eta)$ is periodic in $y$ with period $h^{-\frac12}$.

Now Taylor expand $T(y,\eta)$ near $y=0$, $\eta=0$. In view of Definition \ref{def:degeneratewell} we get, both when $P\in S(1)$ and when $P\in S^2(T^*\mathbb R)$,
$$
P_0(h^\frac12 y,h^\frac12\eta+\xi_0)=h(\eta^2+\omega^2y^2)\id_2+\sum_{j=3}^{k+1}h^{j/2}p_j^{(0)}(y,\eta) +h^{(k+1)/2}r_{k+1}^{(0)}(y,\eta;h)
$$
where $p_j^{(0)}\in S(\langle(y,\eta)\rangle^j)$ and $r_k^{(0)}(h)\in S(\langle(y,\eta)\rangle^{k+1})$ uniformly in $0<h<1$. Using Taylor's formula also on $P_1\in S(1)$ gives 
$$
hP_1(h^\frac12 y,h^\frac12\eta+\xi_0)=h\diag(\mu_1,\mu_2)+\sum_{j=1}^kh^{(j+2)/2}p_j^{(1)}(y,\eta) +h^{(k+3)/2}r_k^{(1)}(y,\eta;h)
$$
where $\mu_1,\mu_2\in\RR$, $p_j^{(1)}\in S(\langle(y,\eta)\rangle^j)$ and $r_k^{(1)}(h)\in S(\langle(y,\eta)\rangle^{k+1})$ uniformly in $0<h<1$. 
We then continue in this way to Taylor expand $h^2P_2,\ldots,h^{k+1}P_{k+1}$, and since $P-\sum_{j=0}^{k+1}h^jP_j=\mathcal O_{S(1)}(h^{k+2})$, the result follows by combining the expansions.
\end{proof}

\subsection{Quasimodes on $\TT$}\label{ss:quasimodes}

The main objective of Section \ref{sec:wellsnormalform} is to prove Theorem \ref{thm:expansionsintro} and thereby obtain approximate eigenvalues and quasimodes for $T^w(y,D)$ on $L^2(\RR)$. However, we first prove that this leads to the existence of approximate eigenvalues and quasimodes for $P^w(x,hD)$ on $L^2(\mathbb T)$:

\begin{proof}[Proof of Corollary \ref{cor:periodicquasimodesnormalform}]
First note that $P(x,\xi)$ is 1-periodic in $x$ by assumption. In view of \eqref{eq:per} we may then regard $P^w(x,hD)$ as an operator on $L^2(\TT;\CC^2)$ (densely defined when $P\in S^2$) by having it act on 1-periodic functions. Similarly, since $T(y,\eta)$ is periodic in $y$ with period $h^{-\frac12}$ by Proposition \ref{prop:normalform}, we can 
identify $\RR/h^{-\frac12}\ZZ$ with the fundamental domain $I_h=[-\frac12 h^{-\frac12},\frac12 h^{-\frac12})$ and view $T^w(y,D)$ and $\mathcal T$ in \eqref{eq:normalformoperator} as densely defined operators on $L^2(I_h)=L^2(I_h;\CC^2)$ by having them act on $h^{-\frac12}$-periodic functions. Note that $v\mapsto e^{i\xi_0\bullet/\sqrt h}v$ is not a unitary transformation on $L^2(I_h)$ since $e^{i\xi_0\bullet/\sqrt h}$ is not periodic with period $h^{-\frac12}$ in general, but $\mathcal T$ preserves periodicity. Indeed,
\begin{equation}\label{eq:preservesperiodicity}
\begin{aligned}
\mathcal Tv(y+h^{-\frac12})&=e^{i\xi_0(y+h^{-\frac12})/\sqrt h}T^w(y+h^{-\frac12},D)(e^{-i\xi_0 \bullet/\sqrt h}v)(y+h^{-\frac12})\\&=e^{i\xi_0(y+h^{-\frac12})/\sqrt h}T^w(y,D)(e^{-i\xi_0 (\bullet+h^{-\frac12})/\sqrt h}v(\bullet+h^{-\frac12}))(y)
\\&=e^{i\xi_0y/\sqrt h}T^w(y,D)(e^{-i\xi_0 \bullet/\sqrt h}v(\bullet+h^{-\frac12}))(y)
\end{aligned}
\end{equation}
where the second identity follows from $T(y,\eta)$ being $h^{-\frac12}$-periodic in $y$. Hence, if $v$ is periodic with period $h^{-\frac12}$ then $\mathcal Tv(y+h^{-\frac12})=\mathcal Tv(y)$.

Fix $\ell\in\NN_0$ and let $\{v^{(j)}(n)\}_{n\in\NN_0}$ and $\{\lambda^{(j)}(n)\}_{n\in\NN_0}$, $j=1,2$, be the quasimodes and approximate eigenvalues of $T^w(y,D)$ on $\RR$ given by Theorem \ref{thm:expansionsintro}. Then $T^w v^{(j)}(n)=\lambda v^{(j)}(n)+h^{\ell+\frac32}r^{(j)}(n)$ where the remainder $r^{(j)}(n)=r^{(j)}_{2\ell}(n;h)\in\mathscr S(\RR)$ has seminorms in $\mathscr S$ bounded  uniformly in $0<h<1$.
Let us fix $j$ and $n$ and drop them from the notation. Set $w(y)=e^{i\xi_0 y /\sqrt h}v(y)$ so that $\mathcal Tw=\lambda w+e^{i\xi_0 \bullet /\sqrt h}h^{\ell+\frac32}r$ and define
$$
\widetilde w(y)=\sum_{k\in\ZZ} w(y-kh^{-\frac12})=\sum_{k\in\ZZ} e^{i\xi_0 (y-kh^{-\frac12}) /\sqrt h}v(y-kh^{-\frac12}).
$$
By \eqref{eq:preservesperiodicity} it follows that if $k\in\ZZ$ then
\begin{align*}
\mathcal T(w(\bullet-kh^{-\frac12}))(y)&=(\mathcal T w)(y-kh^{-\frac12})\\&=\lambda w(y-kh^{-\frac12})+h^{\ell+\frac32}e^{i\xi_0 (y-kh^{-\frac12}) /\sqrt h}r(y-kh^{-\frac12}).
\end{align*}
Since $v\in\mathscr S$ it follows that $\widetilde w\in C^\infty$ is periodic with period $h^{-\frac12}$, and
\begin{equation}\label{eq:periodicqm}
(\mathcal T-\lambda)\widetilde w(y)=h^{\ell+\frac32}\sum_{k\in\ZZ}e^{i\xi_0 (y-kh^{-\frac12}) /\sqrt h} r(y-kh^{-\frac12}). 
\end{equation}
The weighted pullback $u=h^{-\frac14}\gamma^*(\widetilde w)$ is 1-periodic and $\WF_h(u)=\{(0,\xi_0)\}\subset T^*\mathbb T$,  and we shall show that it also has the other properties stated in the corollary.

By assumption we have $\lVert v\rVert_{L^2(\RR;\CC^2)}=\lVert \varphi_{n,\omega}\rVert_{L^2(\RR;\CC)}+\mathcal{O}(h^\frac12)=1+\mathcal{O}(h^\frac12)$, and we claim that $\lVert u\rVert_{L^2(\mathbb T)}=1+\mathcal{O}(h^\frac12)$ as well. Indeed,
\begin{align*}
\lVert u\rVert_{L^2(\mathbb T)}^2&=\int_{\mathbb T}\lvert h^{-\frac14}\widetilde w(h^{-\frac12}x)\rvert^2\,dx
=\int_{I_h}\bigg\lvert \sum_{k\in\ZZ}e^{i\xi_0 (y-kh^{-\frac12}) /\sqrt h}v(y-kh^{-\frac12})\bigg\rvert^2dy\\&
=\lVert v\rVert_{L^2(\RR)}^2+\sum_{k\in\ZZ}\sum_{\lvert j-k\rvert\ge1}e^{i\xi_0(j-k)/h}\int_{I_h}v(y-kh^{-\frac12})\overline{v(y-jh^{-\frac12})}\,dy.
\end{align*}
Since $v\in\mathscr S$ there is for any $N>0$ a constant $C>0$ such that
\begin{align*}
\lvert v(y-kh^{-\frac12})v(y-jh^{-\frac12})\rvert&\le C(1+\lvert y-kh^{-\frac12}\rvert^2)^{-2N}(1+\lvert y-jh^{-\frac12}\rvert^2)^{-N}\\&\le 2^NC(1+\lvert y-kh^{-\frac12}\rvert^2)^{-N}(1+\lvert (j-k)h^{-\frac12}\rvert^2)^{-N}
\end{align*}
where the second estimate follows from Peetre's inequality. It follows that
$$
\sum_{k\in\ZZ}\sum_{\lvert j-k\rvert\ge1}\int_{I_h}\lvert v(y-kh^{-\frac12})v(y-jh^{-\frac12})\rvert\,dy\le 2^NC \int_{-\infty}^\infty \frac{dy}{(1+y^2)^N}\sum_{n=1}^\infty \frac{2}{(1+n^2/h)^N}
$$
and since the right-hand side is $\mathcal{O}(h^N )$ we get $\lVert u\rVert_{L^2(\mathbb T)}^2=\lVert v\rVert_{L^2(\RR)}^2+\mathcal{O}(h^N)=1+\mathcal{O}(h^\frac12)$ by the triangle inequality, and thus $\lVert u\rVert_{L^2(\mathbb T)}=1+\mathcal{O}(h^\frac12)$.

Since $r\in\mathscr S$ uniformly in $0<h<1$ we can apply the same arguments to the right-hand side of \eqref{eq:periodicqm} and, in view of \eqref{eq:normalformpullback}, obtain
\begin{align*}
\lVert (P^w-\lambda)u\rVert_{L^2(\mathbb T)}&=h^{\ell+\frac32}\bigg(\int_{I_h}\bigg\lvert \sum_{k\in\ZZ}e^{i\xi_0 (y-kh^{-\frac12}) /\sqrt h} r(y-kh^{-\frac12})\bigg\rvert^2dy\bigg)^\frac12\\&=h^{\ell+\frac32}(\lVert r\rVert_{L^2(\RR)}+\mathcal{O}(h^N))
\end{align*}
for $N>0$, showing that $\lVert (P^w(x,hD)-\lambda) u\rVert_{L^2(\mathbb T)}=\mathcal{O}(h^{\ell+\frac32})$. 
\end{proof}

\subsection{Explicit WKB construction}

Before proving Theorem \ref{thm:expansionsintro} in full generality we first discuss a special case for which there exists a rather explicit WKB construction. Recall from \eqref{eq:harmoscbasis} the harmonic oscillator basis functions 
\begin{equation*}
\begin{split}
 \varphi_{n,\omega}(y) &=\phi_{n, \omega}(y) e^{-\tfrac{\omega y^2}{2}} = \tfrac{1}{\sqrt{2^n n!}} \left(\tfrac{\omega}{\pi}\right)^{1/4}  H_n\left(\sqrt{\omega} y\right)e^{-\tfrac{\omega y^2}{2}}, \quad n \in \mathbb N_0,\quad \omega>0,
 \end{split}
 \end{equation*}
where $H_n$ is the $n$:th Hermite polynomial, which is even (odd) when $n$ is even (odd).

Let $C_d$ denote the module of homogeneous polynomials of degree $d$ in the ring  $C=\CC[y,\eta]$. Let the polynomials of even and odd degree in $\CC[y,\eta]$ be denoted by
\begin{align*}
\mathcal P_{\mathrm{even}}&=\left\{ \sum_{0\le j\le n}p_{2j}:   p_{k}\in C_k,\ n\in\NN_0\right\},\quad \mathcal P_{\mathrm{odd}}=\left\{ \sum_{0\le j\le n}p_{2j+1}:   p_{k}\in C_k,\ n\in\NN_0\right\}.
\end{align*}

\begin{lemm}\label{lem:parity}
Let $v\in C^\infty$ be either even or odd, and let $p\in\CC[y,\eta]$. Then $p^w(y,D)v$ has the same parity as $v$ when $p\in \mathcal P_{\mathrm{even}}$, and opposite parity when $p\in \mathcal P_{\mathrm{odd}}$.
\end{lemm}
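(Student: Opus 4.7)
The approach is to exploit the reflection symmetry of the Weyl quantization via the parity operator $\mathcal R$ on $C^\infty(\RR)$ defined by $(\mathcal Rv)(y)=v(-y)$, which satisfies $\mathcal R^2=\mathrm{Id}$. Since $v$ has definite parity by hypothesis, $\mathcal Rv=\epsilon v$ with $\epsilon\in\{+1,-1\}$ the parity of $v$, so the lemma reduces to computing the eigenvalue of $\mathcal R$ on the image $p^w(y,D)v$.

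The central ingredient is the conjugation identity
\begin{equation*}
\mathcal R\, p^w(y,D)\, \mathcal R=\check p^w(y,D),\qquad \check p(y,\eta):=p(-y,-\eta),
\end{equation*}
valid for any symbol $p$. I would verify it either by the change of variables $(z,\eta)\mapsto(-z,-\eta)$ in the oscillatory-integral definition of $p^w$, or, since $p$ is polynomial and $p^w(y,D)$ is therefore a differential operator with polynomial coefficients obtained by Weyl-symmetrizing products of $y$ and $D=-i\partial_y$, by combining linearity with the elementary relations $\mathcal Ry\mathcal R=-y$ and $\mathcal RD\mathcal R=-D$. For a homogeneous component $p_d\in C_d$ of degree $d$ one has $\check p_d=(-1)^dp_d$, so $(p_d)^w$ commutes with $\mathcal R$ when $d$ is even and anticommutes with $\mathcal R$ when $d$ is odd.

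Summing over homogeneous components then closes the argument. If $p\in\mathcal P_{\mathrm{even}}$ every homogeneous piece has even degree, so $\check p=p$, and
\begin{equation*}
\mathcal R(p^wv)=p^w(\mathcal Rv)=\epsilon\,p^wv,
\end{equation*}
showing that $p^wv$ has the same parity as $v$. If instead $p\in\mathcal P_{\mathrm{odd}}$ then $\check p=-p$ and the analogous computation gives $\mathcal R(p^wv)=-\epsilon\,p^wv$, i.e.\ $p^wv$ has the opposite parity.

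The only step requiring a moment of care is the conjugation identity, which is specific to the symmetric (Weyl) ordering: the half-shift $(y+z)/2$ in the phase guarantees that the substitution yields simply an overall sign in the symbol rather than an extra lower-order contribution. Everything else is bookkeeping on homogeneous degrees, so I do not expect this lemma to present a genuine obstacle.
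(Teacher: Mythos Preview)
Your proof is correct and takes a genuinely different route from the paper. The paper reduces by linearity to monomials $p(y,\eta)=y^n\eta^k$, expands $p((y+s)/2,\eta)$ via the binomial theorem to write $p^w(y,D)v$ as a linear combination of terms $y^{n-j}D_y^k(y^j v)$, and then tracks parity through each factor of multiplication by $y$ and differentiation $D_y$. Your argument is more conceptual: you encode parity via the reflection $\mathcal R$, establish the covariance $\mathcal R\,p^w\mathcal R=\check p^w$ of Weyl quantization under $(y,\eta)\mapsto(-y,-\eta)$, and read off the result from $\check p=(-1)^d p$ for homogeneous $p$ of degree $d$. This is cleaner and highlights exactly why the Weyl ordering matters (the midpoint $(y+s)/2$ makes the conjugation identity exact), whereas the paper's computation is more elementary but slightly obscures that structural point. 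Either approach is perfectly adequate here.
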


\begin{proof}
By linearity it suffices to consider the case of a homogeneous polynomial $p(y,\eta)=y^{n}\eta^{k}$, where $n+k$ is either even or odd, depending on if $p\in \mathcal P_{\mathrm{even}}$ or $p\in \mathcal P_{\mathrm{odd}}$. Since $$p((y+s)/2,\eta)=2^{-n}\sum_{j=0}^n\binom{n}{j}y^{n-j}\eta^k s^j$$ we find that $p^w(y,D)v(y)$ is a linear combination of terms $y^{n-j}D_y^k(y^jv(y))$ where $0\le j\le n$. 
Since multiplication by $y^m$ changes parity if and only if $m$ is odd, and differentiation $D_y^k$ changes parity if and only if $k$ is odd, it follows that $y^{n-j}D_y^k(y^jv(y))$ will have the same parity as $v$ when $n+k$ is even and opposite parity to $v$ when $n+k$ is odd.
\end{proof}

Introduce the sets
\begin{equation*}
\begin{split}
 \mathcal A_{\operatorname{sym}}&:=\left\{ \begin{pmatrix} p_{11} & p_{12} \\ \bar p_{12} & p_{22} \end{pmatrix}: p_{jk}\in\mathcal P_{\mathrm{even}}
 \right\} \text{ and }
  \mathcal A_{\operatorname{asym}}:=\left\{ \begin{pmatrix} p_{11} & p_{12} \\ \bar p_{12} & p_{22} \end{pmatrix}:  p_{jk}  \in \mathcal P_{\mathrm{odd}}\right\}. 
 \end{split}
\end{equation*}
Let $T=\sum_{j=0}^kh^{(j+1)/2}T_j+h^{(k+3)/2}R_k$ be the symbol given by Proposition \ref{prop:normalform}. Recalling that this is a Taylor expansion we have $T_{2j-1}\in\mathcal A_\mathrm{asym}$ and $T_{2j}\in\mathcal A_\mathrm{sym}$ for $j\ge1$.
We then have the following result.

\begin{prop}\label{thm:quasimodes2}
Assume that for all $\ell\in\NN_0$, the Weyl symbol of $T^w(y,D)$ has an expansion $T = h\sum_{i=0}^{2\ell} h^{i/2} T_i+h^{\ell+\frac32}R_{2\ell}$
such that
\[ T_0(y,\eta) = \operatorname{diag}(\eta^2 + \omega^2 y^2 + \mu_1, \eta^2 + \omega^2 y^2+ \mu_2), \quad \omega>0, \quad \mu_1-\mu_2 \notin (4\mathbb Z +2)\omega\]
where $T_{2i+1} \in \mathcal A_{\operatorname{asym}}$ for $i \in \mathbb N_0$ and $T_{2i} \in  \mathcal A_{\operatorname{sym}}$ for $i\in \mathbb N_0$. In addition, for $\mu_1-\mu_2 \in 4\omega\ZZ$ we assume that $T_{2i}$ is a diagonal matrix and $T_{2i+1}$ has only off-diagonal entries.  Assume also that there is a $\nu\ge0$ such that $T_i,R_{2\ell}\in S(\langle (y,\eta)\rangle^{\nu})$ for $0\le i\le 2\ell$, uniformly in $0<h<1$.
Then there exist approximate eigenvalues
\[\lambda^{(j)}(n) = h \sum_{i \ge 0} h^{i/2} \lambda_{i}^{(j)}(n) \]
with $\lambda^{(j)}_0(n) = (2n+1)\omega+\mu_j$ for some $n \in \mathbb N_0$, $j\in \{1,2\}$, $\lambda^{(j)}_i=0$ for $i \in 2 \mathbb N_0+1$ and quasimodes 
\[ u^{(j)}(n,y) =\sum_{i \ge 0}h^{i/2} u^{(j)}_i(y) e^{-\tfrac{\omega y^2}{2}}\]
where $u_{i}^{(j)}$ are polynomials such that 
\begin{equation}\label{eq:WKB2}
\Big(T^w(y,D)-h\sum_{i =0}^{2\ell} h^{i/2} \lambda^{(j)}_i(n)\Big)\sum_{i=0}^{2\ell} h^{i/2} u^{(j)}_i(y) e^{-\frac{\omega y^2}{2}} = O_\mathscr{S}(h^{\ell+\frac32}).
\end{equation}
The leading order amplitudes are $u^{(1)}_0=(\phi_{n,\omega},0)$ and $u_0^{(2)}=(0,\phi_{n,\omega})$.
\end{prop}

\begin{proof}
The eigensystem for $T_0^w(y,D)$ is given by
\begin{equation*}
\begin{split}
 T_0^w(y,D)\begin{pmatrix} \phi_{n,\omega},0 \end{pmatrix}^t&= ((2n+1) \omega + \mu_1)\begin{pmatrix} \phi_{n,\omega},0 \end{pmatrix}^t \text{ for }n \in \mathbb N_0,  \\
  T_0^w(y,D)\begin{pmatrix} 0,\phi_{m,\omega} \end{pmatrix}^t&= ((2m+1) \omega + \mu_2)\begin{pmatrix} 0,\phi_{m,\omega} \end{pmatrix}^t \text{ for }m \in \mathbb N_0,
 \end{split}
 \end{equation*}
with eigenvectors $\begin{pmatrix} \phi_{n,\omega},0 \end{pmatrix}^t \text{ and }\begin{pmatrix} 0,\phi_{m,\omega} \end{pmatrix}^t$.

We therefore make the approximate eigenvalue and quasi-mode ansatz 
\begin{equation*}
\begin{split}
\lambda &=
\sum_{i \ge 0} h^{i/2} \lambda_{i} \quad\text{ and }\quad
u(y) =\sum_{i \ge 0}h^{i/2} u_i(y) e^{-\tfrac{\omega y^2}{2}},
\end{split}
\end{equation*}
where we present the construction without loss of generality for $ u_0^{(1)}(y):=(\phi_{n,\omega}(y),0)^t$ rather than $(0,\phi_{m,\omega}(y))^t$ and choose $\lambda_0 = (2n+1)\omega+\mu_1$.
Recall that the Hermite polynomial $H_n$ is an even polynomial if $n$ is even and odd polynomial if $n$ is odd. We may assume without loss of generality that $n$ is odd.
Iteratively constructing a WKB solution satisfying \eqref{eq:WKB2} is equivalent to successively solving 
\begin{equation*}
\sum_{i=0}^{k} (T_i^w-\lambda_i) u_{k-i}e^{-\tfrac{\omega y^2}{2}} =0
\end{equation*}
for $k=0,\ldots,2\ell$. In fact, as we will show the $u_j$ are polynomials so $u_je^{-\omega y^2/2}\in\mathscr S$. Since the remaining terms in \eqref{eq:WKB2} are 
$h^{1+(i+j)/2}T_i^w u_je^{-\omega y^2/2}$ for $2\ell <i+j\le 4\ell$ together with $h^{\ell+\frac32} R_{2\ell}^w(\sum_{j=0}^{2\ell} h^{j/2}u_je^{-\omega y^2/2})$, and since $T_i,R_{2\ell} \in S(\langle (y,\eta)\rangle^\nu)$, these terms are all in $h^{\ell+\frac32}\mathscr S$ uniformly in $0<h<1$ by assumption which gives an error of order $O_\mathscr{S}(h^{\ell+\frac32})$.

\emph{Step 1:} The first step is to note that 
\[ (T_0^w -\lambda_0) u_0 e^{-\tfrac{\omega y^2}{2}} =0\]
holds by assumption.

\emph{Step 2:} We shall argue by induction as $k$ runs through two consecutive integers. We first notice for $k=1$ that
\[ (T_0^w -\lambda_0) u_1 e^{-\tfrac{\omega y^2}{2}} =-(T_1^w-\lambda_1) u_0e^{-\tfrac{\omega y^2}{2}}=\begin{pmatrix} \lambda_1-(T_1^w)_{11} \\
-(T_1^w)_{21} \end{pmatrix}\phi_{n,\omega}(y).\]
To solve this for $u_1$, the right-hand side must be orthogonal to $\ker(T_0^w-\lambda_0)$. 
We then observe that according to Lemma \ref{lem:parity}, applying $(T_1^w)_{j1}$ to $\phi_{n,\omega}$ changes the parity of that function, which implies that  
$$\left\langle \phi_{n,\omega},(T_1^w)_{j1}\phi_{n,\omega}\right\rangle=0,\quad j=1,2.$$
This gives $\lambda_1=0$ and since $((T_0^w)_{11}-\lambda_0)^{-1}$ is bounded on the orthogonal complement of $\ker ((T_0^w)_{11}-\lambda_0)=\Span\{\phi_{n,\omega}\}$ we can take
\[ (u_1)_1 :=- e^{\tfrac{\omega y^2}{2}}((T_0^w)_{11} -\lambda_0)^{-1} (T_1^w)_{11}\phi_{n,\omega}.\]
For $(u_1)_2$ there are two cases to consider: if $\mu_1-\mu_2 = 4\omega l \in 4\omega \ZZ$ then $\ker((T_0)_{22}^w-\lambda_0)=\Span\{\phi_{n+2l,\omega}\}$, and otherwise if $\mu_1-\mu_2\notin 2\omega\mathbb Z  $ then $\ker((T_0)_{22}^w-\lambda_0)=\{0\}$. From the assumptions on $T_1$ we see that in either case we can take
\[ (u_1)_2 :=- e^{\tfrac{\omega y^2}{2}}((T_0^w)_{22} -\lambda_0)^{-1} (T_1^w)_{21}\phi_{n,\omega}.\]
As mentioned, $(T_1^w)_{j1}\phi_{n,\omega}$ is even since $n$ is odd. Multiplying by $e^{-\omega y^2/2}$ and applying $(T_0^w)_{jj}-\lambda_0$ to both sides does not change the parity, so $(u_1)_j$ must also be even. It is easy to check that $(u_1)_j$ is a polynomial function for $j=1,2$, so $u_1e^{-\omega y^2/2}\in\mathscr S$. This follows as 
\[ \left\{\sum_{i=0}^{N} a_i y^{2i} e^{-\omega y^2/2}; a_i \in \CC\right\} \text{ and }  \left\{\sum_{i=0}^{N} a_i y^{2i+1} e^{-\omega y^2/2}; a_i \in \CC\right\}\]
are invariant subspaces of $T_0$ for any $N \in \mathbb N$; see the last paragraph in the proof of Proposition \ref{thm:simple} below.

For $k=2$ we get
\begin{align*} (T_0^w -\lambda_0) u_2 e^{-\tfrac{\omega y^2}{2}} &=-(T_1^w-\lambda_1) u_1e^{-\tfrac{\omega y^2}{2}}-(T_2^w-\lambda_2) 
u_0e^{-\tfrac{\omega y^2}{2}}
\end{align*}
where $u_0=(\phi_{n,\omega},0)^t$ and $\lambda_1=0$ by the previous steps.
To solve the equation for $(u_2)_1$, the right-hand side must be orthogonal to $\ker ((T_0^w)_{11}-\lambda_0)=\Span\{\phi_{n,\omega}\}$, which means that $\lambda_2$ must satisfy
$$
\lambda_2=\bigg\langle (T_2^w)_{11}\phi_{n,\omega}+\Big(T_1^wu_1 e^{-\tfrac{\omega y^2}{2}}\Big)_1,\phi_{n,\omega}\bigg\rangle.
$$
With this choice of $\lambda_2$ we then get the solution
$$
(u_2)_1=-e^{\tfrac{\omega y^2}{2}}((T_0^w)_{11}-\lambda_0)^{-1}\bigg[ \Big(T_1^wu_1 e^{-\tfrac{\omega y^2}{2}}\Big)_1+((T_2^w)_{11}-\lambda_2)\phi_{n,\omega}\bigg].
$$
Note that the expression in brackets is an odd function by Lemma \ref{lem:parity}, and as above we find that $(u_2)_1$ is an odd polynomial.

For $(u_2)_2$ we get the equation
\begin{align*} ((T_0^w)_{22} -\lambda_0) (u_2)_2 e^{-\tfrac{\omega y^2}{2}} &=-\Big(T_1^w u_1e^{-\tfrac{\omega y^2}{2}}\Big)_2-(T_2^w)_{21}\phi_{n,\omega}
\end{align*}
so by similar reasoning as above we get
\[ (u_2)_2 :=- e^{\tfrac{\omega y^2}{2}}((T_0^w)_{22} -\lambda_0)^{-1} \bigg[ \Big(T_1^w u_1e^{-\tfrac{\omega y^2}{2}}\Big)_2+(T_2^w)_{21}\phi_{n,\omega}\bigg]
\]
where $(u_2)_2$ is an odd polynomial. Observe that for $\mu_1-\mu_2 \in 4\omega\ZZ$ the term in the bracket vanishes.

\emph{Step 3:} 
Now let $k\in 2\NN-1$ be arbitrary, and assume that $\lambda_i$ and $u_i$ have already been determined for $0\le i<k$ such that for $i$ odd we have $\lambda_i=0$ and $u_i = ((u_i)_1,(u_i)_2)^t$ where $(u_i)_1$ and $(u_i)_2$ are even polynomials, while for $i$ even $u_i = ((u_i)_1,(u_i)_2)^t$ where $(u_i)_1$ and $(u_i)_2$ are odd polynomials.
Then
\begin{align*}
(T_0^w-\lambda_0)   u_ke^{-\tfrac{\omega y^2}{2}}&=-(T_k^w-\lambda_k) u_{0}e^{-\tfrac{\omega y^2}{2}}\\
 &\quad-\sum_{i\in [1,k-2]\cap2\ZZ+1} T_i^w u_{k-i}e^{-\tfrac{\omega y^2}{2}}-\sum_{i\in [2,k-1]\cap2\ZZ} (T_i^w-\lambda_i) u_{k-i}e^{-\tfrac{\omega y^2}{2}}
\end{align*}
where all terms on the right are even functions by the induction hypothesis and Lemma \ref{lem:parity}, with the exception of $\lambda_k u_0e^{-\omega y^2/2}=\lambda_k(\phi_{n,\omega},0)^t$ which is odd. To solve the equation for $(u_k)_1$ the right-hand side must be orthogonal to $\ker ((T_0^w)_{11}-\lambda_0)=\Span\{\phi_{n,\omega}\}$ which then gives $\lambda_k=0$ and 
\begin{equation*}
    \begin{split}
    (u_k)_j =- e^{\tfrac{\omega y^2}{2}}((T_0^w)_{jj} -\lambda_0)^{-1} \Bigg[&\sum_{i \in [2,k-1]\cap 2\ZZ} \Big((T_i^w-\lambda_i)u_{k-i}e^{-\tfrac{\omega y^2}{2}}\Big)_{j}\\
        &+\sum_{i \in [1,k] \cap 2\ZZ+1} \Big(T_i^wu_{k-i}e^{-\tfrac{\omega y^2}{2}}\Big)_{j}\Bigg],\quad j=1,2,
            \end{split}
\end{equation*} 
which makes $u_k$ an even polynomial by the same arguments as before.

\emph{Step 4:} Under the same hypothesis as in step 3, but now with $k \in 2\mathbb N$, we define 
\begin{align*}   \lambda_k = \Bigg\langle (T_k^w)_{11}\phi_{n,\omega}+&\sum_{i \in [2,k]\cap 2\ZZ} \Big((T_i^w-\lambda_i)u_{k-i} e^{-\tfrac{\omega y^2}{2}}\Big)_1\\+&\sum_{i \in [1,k-1] \cap 2\ZZ+1} \Big(T_i^w u_{k-i} e^{-\tfrac{\omega y^2}{2}}\Big)_1,\phi_{n,\omega} \Bigg \rangle.\end{align*}

In analogy with the case $k=2$, this allows us to define
\begin{equation*}
    \begin{split}
 (u_k)_j =-e^{\tfrac{\omega y^2}{2}} ((T_0^w)_{jj} -\lambda_0)^{-1}\Bigg[&\sum_{i \in [2,k]\cap 2\ZZ} \Big((T_i^w-\lambda_i)u_{k-i}e^{-\tfrac{\omega y^2}{2}}\Big)_j\\
 &+\sum_{i \in [1,k-1] \cap 2\ZZ+1} \Big(T_i^w u_{k-i}e^{-\tfrac{\omega y^2}{2}}\Big)_j\Bigg],\quad j=1,2,
\end{split}
\end{equation*}
which makes $u_k$ an odd {polynomial}, and this closes the recurrence scheme.
\end{proof}

Note that for the proof of Proposition \ref{thm:quasimodes2} to work, the assumption that $\mu_1-\mu_2\notin (4\ZZ+2)\omega$ is crucial. As mentioned in the introduction, this assumption is violated for both the pseudodifferential Harper model \eqref{eq:HarperPDO} and the low-energy model \eqref{eq:tm20}, see the proofs of Theorems \ref{cor:periodicquasimodes2} and \ref{cor:periodicquasimodes} in Section \ref{sec:chiral}.

\subsection{Low-lying spectral analysis}\label{ss:completeness}

To prove Theorem \ref{thm:expansionsintro} 
in full generality
we will adapt a technique of Barry Simon \cite{simon1983semiclassical}.  The idea is to first show that the spectrum is stable in a certain sense, and then use this fact to obtain asymptotic expansions of the eigenvalues and eigenvectors, which by truncation give approximate eigenvalues and quasimodes. We recall that we in this context regard $T^w(y,D)$ as an operator on  $\RR$ with dense domain in $L^2(\RR)$. It will be convenient to also be able to express the operator $T^w(y,D)$ in the variables $x,\xi$ in order to make use of the semiclassical symbolic calculus. 
We may without loss of generality assume that $\xi_0=0$ in \eqref{eq:normalformoperator}, so to avoid additional notation we will simply write
\begin{equation}\label{eq:normalformpullback2}
P^w(x,hD)=\gamma^*\circ T^w(y,D)\circ(\gamma^{-1})^*,\qquad \gamma(x)=h^{-\frac12}x,
\end{equation}
and regard $P^w(x,hD)$ as an operator on $\RR$ with dense domain in $L^2(\RR)$; in other words, we assume that  the well is located at $(0,\xi_0)=(0,0)$.

The low-lying eigenvalues of $T^w(y,D)$ that we are interested in correspond to the bottom of the point spectrum of $P^w(x,hD)$ in \eqref{eq:normalformpullback} resulting from the well at $(0,\xi_0)$, so certain care has to be taken to avoid potential contribution from other wells or other components of the zero set of $\det(P_0)$ (compare \eqref{eq:componentszeroset} and \eqref{eq:levelsetHarper}--\eqref{eq:levelset2Harper} below). 
To make this more precise, let $\chi\in C^\infty(T^*\RR)$ be a cutoff function, independent of $h$ and supported in a small neighborhood of $(0,\xi_0)$, such that $0\le \chi\le 1$ and $\chi\equiv1$ near $(0,\xi_0)$, and define the {\it massive Weyl operator}
\begin{equation}\label{eq:massive}
P_\mathrm{mass}^w(x,hD)=P^w(x,hD)+(1-\chi^w(x,hD))\id_{\CC^{2\times2}}.
\end{equation}
By Definition \ref{def:degeneratewell}, 
there is a constant $C\ge0$ such that 
$P_0(x,\xi)\ge -C$ for all $(x,\xi)$ in the sense of semi-bounded operators,
where $P_0$ is the principal symbol of $P^w(x,hD)$.
Since we can always multiply $1-\chi$ by a sufficiently large multiple of $C$  if necessary, we may without loss of generality assume that $\chi$ can be chosen so that
\begin{equation}\label{eq:lowerboundwithchi}
P_0(x,\xi)+1-\chi(x,\xi)\ge \min(1,\tfrac12((\xi-\xi_0)^2+\omega^2x^2))\id_2.
\end{equation}
In particular, $P_0(x,\xi)+1-\chi(x,\xi)$ is positive definite away from $(0,\xi_0)$. 
When $P\in S^2(T^*\mathbb R)$ we can, by using \eqref{eq:lowerboundS2} and arguing in a similar way, make sure that
\begin{equation}\label{eq:lowerboundwithchi2}
(P_\mathrm{mass}^w(x,hD)u,u)\ge  ((V+1-\chi^w)u,u)-Ch(u,u)
\end{equation}
where $V(x)+1-\chi(x,\xi)\in S(1)$ is positive definite away from $(0,\xi_0)$ and satisfies 
\begin{equation}\label{eq:lowerboundwithchi3}
V(x)+1-\chi(x,\xi)\ge \min(1,Cx^2)\id_2
\end{equation}
for some $C>0$.
In particular, both when $P\in S(1)$ and when $P\in S^2(T^*\mathbb R)$ it follows that $P_\mathrm{mass}^w$ is microlocally elliptic away from $(0,\xi_0)$, and it is easy to check that the approximate eigenvalues of order $\mathcal{O}(h)$ of $P^w$ which correspond to quasimodes microlocalized at $(0,\xi_0)$ are precisely the approximate eigenvalues of order $\mathcal{O}(h)$ of the massive Weyl operator $P_\mathrm{mass}^w$.

Having introduced $P_\mathrm{mass}^w$ we then let $T_\mathrm{mass}^w$ be the operator
\begin{equation}\label{eq:massiveT}
T_\mathrm{mass}^w(y,D)=T^w(y,D)+1-G^w(y,D), \quad G(y,\eta)=\chi(h^{\frac12}y,h^{\frac12}\eta),
\end{equation}
so that $P_\mathrm{mass}^w(x,hD)=\gamma^*\circ T_\mathrm{mass}^w(y,D)\circ (\gamma^{-1})^*$. In particular
\begin{equation*}
\chi^w(x,hD_x)=\gamma^*\circ G^w(y,D_y)\circ(\gamma^{-1})^*,
\end{equation*}
and $P_\mathrm{mass}^w(x,hD)$ and $T_\mathrm{mass}^w(y,D)$ have the same spectrum. Also, since $\chi\in C_0^\infty(T^*\RR)$ in \eqref{eq:massive} is independent of $h$, and $G(y,\eta)=\chi(h^{\frac12}y,h^\frac12\eta)$ by  \eqref{eq:massiveT}, we see that
\begin{equation}\label{eq:suppcondG}
(y,\eta)\in\supp(1-G)\quad\Longrightarrow\quad \lvert h^{1/2}\eta\rvert\ge \delta_0 >0
\end{equation}
for some constant $\delta_0$.

Recall that $\varphi_n=\varphi_{n,\omega}$, $n\ge0$, are the harmonic oscillator basis functions given by \eqref{eq:harmoscbasis}, where we omit $\omega$ to shorten notation. From \eqref{eq:Texpansion}, we notice that $T^w=hT_0^w+h^{3/2}R_0^w$ where $R_0\in S(\langle(y,\eta)\rangle^3)$ uniformly for $0<h<1$, and
\begin{equation*}
T_0^w(\varphi_n,0)^t=e_n^{(1)}(\varphi_n,0)^t,
\qquad T_0^w(0,\varphi_n)^t=e_n^{(2)}(0,\varphi_n)^t,
\end{equation*}
where
\[ e_n^{(1)} = (2n+1)\omega+\mu_1 , \quad e_n^{(2)}=(2n+1)\omega+\mu_2,\quad n \in \mathbb N_0.\]
Let $(e_n)_{n \in \mathbb N}$ denote a monotonically increasing ordering of the two sets of eigenvalues.
The spectrum of $T_\mathrm{mass}^w$ is covered in the following sense:

\begin{thm}\label{thm:stability}
Let $\lambda_n(h)$ be the $n$:th eigenvalue, counting multiplicity, of $T_{\operatorname{mass}}^w$ and let $e_n$ be the $n$:th eigenvalue, counting multiplicity, of $T_0^w$, viewed as densely defined operators on $L^2(\RR)$. Then for $n$ fixed and $h$ small, $T_{\operatorname{mass}}^w$ has at least $n$ eigenvalues and
$$
\lim_{h\to0^+} \lambda_n(h)/h=e_n.
$$
\end{thm}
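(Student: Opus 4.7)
The plan is a two-sided min-max argument, preceded by a discreteness observation. First, the massive term $1-G^w$ ensures that $T_{\operatorname{mass}}^w$ has essential spectrum bounded away from $0$ as $h\to 0$: since $\chi$ has compact support in $T^*\RR$, the rescaled cutoff $G(y,\eta)=\chi(h^{1/2}y,h^{1/2}\eta)$ has compact support in $T^*\RR$ for each fixed $h$, so $G^w$ is compact on $L^2(\RR;\CC^2)$. Combined with the Gårding-type bound $T^w\ge -Ch$ (from $P_0\ge 0$ and sharp Gårding for systems in the $S(1)$ case, or directly from \eqref{eq:lowerboundwithchi2} in the $S^2$ case), one finds $\Spec_{\mathrm{ess}}(T_{\operatorname{mass}}^w)\subset[1-Ch,\infty)$, so the low-lying spectrum is discrete and can be ordered $\lambda_1(h)\le\lambda_2(h)\le\cdots$ via min-max.

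For the upper bound $\limsup_{h\to 0^+}\lambda_n(h)/h\le e_n$, I would take $\psi_1,\ldots,\psi_n$ an orthonormal set of eigenfunctions of $T_0^w$ with eigenvalues $e_1\le\cdots\le e_n$; each is a tensor product of a Hermite function with a basis vector of $\CC^2$, hence Schwartz. With $V_n=\Span(\psi_1,\ldots,\psi_n)$ and $\psi\in V_n$ of unit norm, Proposition \ref{prop:normalform} gives $T=hT_0+h^{3/2}R_0$ and hence
$$
\langle T_{\operatorname{mass}}^w\psi,\psi\rangle=h\langle T_0^w\psi,\psi\rangle+h^{3/2}\langle R_0^w\psi,\psi\rangle+\langle(1-G^w)\psi,\psi\rangle.
$$
The first term is $\le he_n$. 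The second is $O(h^{3/2})$ uniformly on $V_n$, since $R_0\in S(\langle(y,\eta)\rangle^3)$ uniformly in $h$ and $V_n$ is finite-dimensional with Schwartz basis. The third is $O(h^\infty)$, because $1-G$ is supported where $|(y,\eta)|\gtrsim h^{-1/2}$ while $\psi$ is Schwartz. By min-max, $\lambda_n(h)\le he_n+O(h^{3/2})$. In particular, $T_{\operatorname{mass}}^w$ has at least $n$ eigenvalues below $1/2$ for $h$ small, justifying the indexing.

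For the lower bound, fix $E<e_n$ not in $\Spec(T_0^w)$ and aim to show $\lambda_n(h)>hE$ for small $h$. Choose a partition of unity $\phi_0^2+\phi_1^2\equiv 1$ on $T^*\RR$ with $\phi_0$ supported where $\chi\equiv 1$ and $\phi_1$ vanishing on an even smaller neighborhood of $(0,0)$. Setting $\Phi_j(y,\eta)=\phi_j(h^{1/2}y,h^{1/2}\eta)$, the phase-space IMS formula of Lemma \ref{lem:IMS} yields
$$
T_{\operatorname{mass}}^w=\Phi_0^wT^w\Phi_0^w+\Phi_1^w(T^w+1-G^w)\Phi_1^w+\mathcal{O}(h).
$$
On $\supp\phi_1$ one has $1-\chi\ge c_0>0$, so the second piece is $\ge(c_0-O(h))(\Phi_1^w)^2$ after sharp Gårding. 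Any $\psi$ with $\langle T_{\operatorname{mass}}^w\psi,\psi\rangle\le hE\|\psi\|^2$ must therefore satisfy $\|\Phi_1^w\psi\|^2=O(h)\|\psi\|^2$, i.e.\ concentrate on $\supp\Phi_0$. On that set $G\equiv 1$, so the leading contribution is $\Phi_0^wT^w\Phi_0^w=h\Phi_0^wT_0^w\Phi_0^w+h^{3/2}\Phi_0^wR_0^w\Phi_0^w$, and the min-max count of eigenvalues of $h\Phi_0^wT_0^w\Phi_0^w$ below $hE+o(h)$ converges to $N(E;T_0^w)<n$. This contradicts the existence of $n$ eigenvalues $\le hE$ and gives $\liminf_{h\to 0^+}\lambda_n(h)/h\ge e_n$.

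The main obstacle is controlling the perturbation $h^{3/2}R_0^w$ on $\supp\Phi_0$: since $|R_0|\lesssim h^{-3/2}$ there, $h^{3/2}R_0^w$ is \emph{not} small in operator norm on all of that subspace, and a naive perturbative comparison with $hT_0^w$ fails. The remedy is that low-energy eigenfunctions of $T_{\operatorname{mass}}^w$ must in fact concentrate at unit scale in $(y,\eta)$ (as anticipated by the Schwartz trial functions furnishing the upper bound). Making this rigorous most likely requires a nested IMS localization inside $\supp\Phi_0$, or an Agmon-type exponential decay estimate exploiting the harmonic-oscillator character of $hT_0^w$; once established, $h^{3/2}R_0^w$ becomes genuinely $O(h^{3/2})$ in the operator norm restricted to the relevant approximating subspace, which closes the argument.
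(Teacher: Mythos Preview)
Your upper bound is correct and in fact slightly cleaner than the paper's: since the Hermite eigenfunctions are Schwartz and independent of $h$, you can use them directly as trial functions without the auxiliary cutoff $J_1^w$ the paper introduces; the massive term and the $h^{3/2}R_0^w$ correction are controlled exactly as you say.

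The lower bound, however, has a genuine gap, and you have correctly located it: with cutoffs $\Phi_j$ at the fixed scale $h^{-1/2}$ in $(y,\eta)$ (i.e.\ at scale $1$ in $(x,\xi)$), the term $h^{3/2}\Phi_0^wR_0^w\Phi_0^w$ is only $\mathcal O(1)$, not $o(h)$, and the comparison with $hT_0^w$ breaks down. Your proposed remedies (nested IMS, Agmon decay) would presumably work but are heavier than necessary. The paper's resolution is a single IMS localization at an \emph{intermediate} scale: take $\chi_1$ supported in a ball of radius $\sim h^{\nu}$ in $(x,\xi)$ with $\tfrac13<\nu<\tfrac12$ (the paper uses $\nu=2/5$), equivalently $J_1$ supported at scale $h^{\nu-1/2}=h^{-1/10}$ in $(y,\eta)$. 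On that support $\langle(y,\eta)\rangle^3\lesssim h^{-3/10}$, so $h^{3/2}J_1^wR_0^wJ_1^w=\mathcal O(h^{6/5})=o(h)$, while on the complement the principal symbol of $P_{\mathrm{mass}}$ is $\gtrsim h^{2\nu}=h^{4/5}\gg h$, so sharp G\aa rding gives $X_0^wP_{\mathrm{mass}}^wX_0^w\ge hr(X_0^w)^2$ for any fixed $r$ and $h$ small. The window $\tfrac13<\nu<\tfrac12$ is exactly what makes both estimates succeed simultaneously.

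There is also a second idea in the paper's lower bound that you are missing: rather than comparing min-max counts of a localized operator $\Phi_0^wT_0^w\Phi_0^w$ (which would require further argument), the paper writes, for $e_n<r<e_{n+1}$,
\[
X_1^wH_0^wX_1^w\ \ge\ X_1^w(H_0^w-hr)P_nX_1^w+hr(X_1^w)^2,
\]
where $P_n$ is the spectral projection of $H_0^w$ onto eigenvalues below $hr$. The operator $R=X_1^w(H_0^w-hr)P_nX_1^w$ has rank at most $n$, so combining the pieces gives $P_{\mathrm{mass}}^w\ge hr+R+o(h)$; one then picks $\psi$ in the span of the first $n+1$ eigenvectors of $P_{\mathrm{mass}}^w$ with $\psi\in\ker R$ to conclude $\lambda_{n+1}\ge hr+o(h)$. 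This finite-rank trick replaces your proposed spectral comparison and avoids any further localization or decay estimates.
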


Note that if $\mu_1-\mu_2\notin 2\omega\ZZ$ then $T_0^w$ has only simple eigenvalues and all elements in $(e_n)_{n\in\NN}$ are distinct. If $\mu_1-\mu_2= 2\omega N$ for some $0\ne N\in\ZZ$, then $T_0^w$ has eigenvalues both of multiplicity one and two, and $(e_n)_{n\in\NN}$ contains both some elements that are distinct, and some that appear twice. If $N=0$ then all eigenvalues have multiplicity two, and all elements in $(e_n)_{n\in\NN}$ appear twice.
Theorem \ref{thm:stability} covers all situations. However, to avoid cumbersome notation involving $N$ we will assume that $(\mu_1,\mu_2)=(-\omega,\omega)$ in the sequel. 
The eigenvalues $(e_n)_{n\in\NN}$ of $T_0^w$ are then given by
\begin{equation}\label{eq:eigenvalueordering}
e_{2n+j}=e_n^{(j)}=(2n+1)\omega+(-1)^j\omega,\quad n\in\NN_0,\quad j=1,2.
\end{equation}
In particular, $e_1=0$ is a simple eigenvalue while $e_{2m}=e_{2m+1}$ for $m\ge1$.

Theorem \ref{thm:stability} has an analog for scalar self-adjoint Schr\"odinger operators on the line, and as mentioned we will adapt a proof by Simon \cite[Theorem 1.1]{simon1983semiclassical} to our situation.
One difference is that we shall use a microlocal cutoff function instead of a local one which allows applications to operators $P^w$ with bounded symbols (such as the pseudodifferential Harper model) when the domain of $P^w$ is all of $L^2$, while the domains of the operators $T_0^w$ and $R_0^w$ in the expansion $T^w=hT_0^w+h^{3/2}R_0^w$ are strictly smaller.

To this end, fix $J\in C_0^\infty(\RR)$ with $0\le J\le 1$ and $J(y)=1$ (resp.~0) if $\lvert y\rvert\le 1$ (resp.~$\lvert y\rvert\ge 2$), and let  
\begin{equation}\label{def:J1}
J_1(y,\eta;h)=J(h^{1/10}y)J(h^{1/10} \eta).
\end{equation}

\begin{lemm}\label{lem:remainderboundL2}
If $R_0\in S(\langle(y,\eta)\rangle^3)$ uniformly for $0<h<1$ then 
$$
\lVert h^{3/2}J_1^w R_0^w J_1^w\rVert =\mathcal{O}(h^{6/5})
$$
in $L^2(\RR;\CC^2)$.
\end{lemm}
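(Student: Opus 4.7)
The plan is to exploit that $J_1$ localizes phase space to the set $\Omega_h=\{(y,\eta):|y|,|\eta|\le 2h^{-1/10}\}$, on which the symbol $R_0$ is pointwise controlled by $\langle(y,\eta)\rangle^3\le Ch^{-3/10}$. I would turn this pointwise estimate into an operator-norm bound in two steps: a Calder\'on--Vaillancourt bound on a localized piece of $R_0^w$, and a disjoint-support argument for the remainder, carried out after a semiclassical rescaling.

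For the main piece, pick $J_3\in C_c^\infty(\RR^2)$ with $J_3\equiv 1$ on $\supp J_1$ and $\supp J_3\subset\{|y|,|\eta|\le 3h^{-1/10}\}$ satisfying $|\partial^\alpha J_3|\le C_\alpha h^{|\alpha|/10}$, and decompose $R_0=J_3R_0+(1-J_3)R_0$. By Leibniz, on $\supp J_3$ every derivative of $J_3R_0$ is bounded by $\langle(y,\eta)\rangle^3\le Ch^{-3/10}$, since derivatives of $J_3$ contribute only additional (harmless) positive powers of $h^{1/10}$. Thus $J_3R_0$ lies in the non-semiclassical symbol class $S(h^{-3/10})$ with seminorms uniform in $h$, and the Calder\'on--Vaillancourt theorem gives $\|(J_3R_0)^w\|_{L^2}\le Ch^{-3/10}$. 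Likewise $J_1\in S(1)$ uniformly, so $\|J_1^w\|_{L^2}=\mathcal{O}(1)$, hence $\|J_1^w(J_3R_0)^wJ_1^w\|\le Ch^{-3/10}$. Multiplying by $h^{3/2}$ gives the claimed $\mathcal{O}(h^{6/5})$.

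To close the argument I would handle the remainder $J_1^w((1-J_3)R_0)^wJ_1^w$ via the semiclassical rescaling $\hbar=h^{1/5}$, with the unitary $U_\hbar v(y)=\hbar^{1/4}v(\hbar^{1/2}y)$ conjugating a Weyl operator with symbol $a$ into the semiclassical Weyl quantization (with parameter $\hbar$) of $\tilde a(y,\eta)=a(\hbar^{-1/2}y,\hbar^{-1/2}\eta)$. Under this rescaling $J_1$ becomes the $\hbar$-independent symbol $J(y)J(\eta)\in C_c^\infty(\RR^2)$, while $1-J_3$ becomes supported in $\{|y|\ge 3\}\cup\{|\eta|\ge 3\}$, so the two symbols have disjoint supports separated by distance $\gtrsim 1$. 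Standard semiclassical microlocal analysis (non-stationary phase in the Moyal product, equivalently disjoint wavefront sets) then yields that the rescaled composition is $\mathcal{O}_{L^2}(\hbar^\infty)=\mathcal{O}(h^\infty)$. The main obstacle is precisely this remainder term: since $(1-J_3)R_0$ grows like $\langle(y,\eta)\rangle^3$ at infinity, its non-semiclassical Weyl quantization is not $L^2$-bounded and Calder\'on--Vaillancourt cannot be applied directly, so the rescaling at the scale $\hbar=h^{1/5}$ is essential in order to reduce to a standard disjoint-support estimate.
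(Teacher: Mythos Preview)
Your proposal is correct and amounts to a careful justification of the paper's two-line proof, which simply observes that $h^{3/2}\lvert\partial_y^\alpha\partial_\eta^\beta R_0\rvert\le C_{\alpha\beta}h^{6/5}$ on $\supp J_1$ and then concludes the $L^2$-bound ``by the calculus'' without further detail. Your decomposition $R_0=J_3R_0+(1-J_3)R_0$, with Calder\'on--Vaillancourt applied to the first piece and the $\hbar=h^{1/5}$ rescaling plus disjoint-support argument for the second, is exactly one way to make that appeal precise; the only point to watch is that the rescaled remainder symbol lands in the borderline class $S_{1/2}$ with respect to $\hbar$, and the $\mathcal{O}(\hbar^\infty)$ gain in the Moyal expansion comes from the fact that the other factor $\tilde J_1=J(y)J(\eta)$ is $\hbar$-independent, so the $N$-th remainder is $\mathcal{O}(\hbar^{N/2})$ rather than $\mathcal{O}(1)$.
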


\begin{proof}
Since $\lvert y\rvert,\lvert \eta\rvert\le 2 h^{-1/10}$ on the support of $J_1$ we have
$$
h^{3/2}\lvert\partial_y^\alpha\partial_\eta^\beta R_0(y,\eta)\rvert\le h^{3/2} C_{\alpha\beta}(1+2h^{-1/10})^3\le C_{\alpha\beta}' h^{6/5},\quad 0<h<1.
$$
Hence, $h^{3/2}J_1^w R_0^w J_1^w:L^2(\RR;\CC^2)\to L^2(\RR;\CC^2)$ is $\mathcal{O}(h^{6/5})$ by the Weyl calculus.
\end{proof}

To shorten notation below we will always understand $G^w$ and $J_1^w$ to mean non-semiclassical Weyl quantizations in the variable $y$ (as in, e.g., $G^w(y,D_y)$), while $\chi^w$
is understood as the semiclassical Weyl quantization $\chi^w(x,hD_x)$ in the variable $x$.

We begin by establishing an upper bound.

\begin{prop}\label{prop:upper}
With notation and assumptions as in Theorem \ref{thm:stability}, for $n$ fixed and $h$ small, $T_{\operatorname{mass}}^w$ has at least $n$ eigenvalues and
\begin{equation}\label{eq:upper}
\varlimsup_{h\to0^+} \lambda_n(h)/h\le e_n.
\end{equation}
\end{prop}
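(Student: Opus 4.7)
The plan is to apply the min-max principle with a trial subspace adapted to the leading harmonic oscillator $T_0^w$. Let $\psi_1,\ldots,\psi_n$ be the first $n$ eigenfunctions of $T_0^w$ in the ordering \eqref{eq:eigenvalueordering}; by \eqref{eq:component} each $\psi_j$ has the form $(\varphi_k,0)^t$ or $(0,\varphi_k)^t$ and therefore lies in $\mathscr S(\RR;\CC^2)$. I would take $V_n=\Span\{\psi_1,\ldots,\psi_n\}$ as the trial space, which is a fixed (independent of $h$) $n$-dimensional orthonormal subspace contained in the domain of $T_\mathrm{mass}^w$.

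For $\psi\in V_n$ with $\lVert\psi\rVert=1$ I would split $T_\mathrm{mass}^w=hT_0^w+h^{3/2}R_0^w+(1-G^w)$ and bound each contribution separately. The first obeys $\langle hT_0^w\psi,\psi\rangle\le he_n$ by the spectral characterization of $\psi$. The second is $O(h^{3/2})$: since $R_0\in S(\langle(y,\eta)\rangle^3)$ uniformly in $0<h<1$ and $V_n$ is a fixed finite-dimensional subspace of Schwartz functions, the restriction of $R_0^w$ to $V_n$ has operator norm bounded independently of $h$. The third is $O(h^\infty)$ because the symbol $G(y,\eta)=\chi(h^{1/2}y,h^{1/2}\eta)$ is identically $1$ on a ball of radius $\sim h^{-1/2}$ about the origin; combining the Schwartz decay of $\psi$ with an integration-by-parts argument in the oscillatory integral representation of $(1-G^w)\psi$ (or, equivalently, sandwiching $\psi$ between auxiliary cutoffs $\tilde J_h$ whose supports lie inside $\{G\equiv 1\}$ while $\lVert\psi-\tilde J_h^w\psi\rVert_{L^2}=O(h^\infty)$) yields $\lVert(1-G^w)\psi\rVert_{L^2}=O(h^N)$ for every $N$. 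Putting these together gives $\sup_{\psi\in V_n,\lVert\psi\rVert=1}\langle T_\mathrm{mass}^w\psi,\psi\rangle\le he_n+O(h^{3/2})$.

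To convert this min-max bound into a statement about $\lambda_n(h)$ I must verify that $\inf\sigma_{\mathrm{ess}}(T_\mathrm{mass}^w)$ stays uniformly bounded below by a positive constant as $h\to 0$. The assumption \eqref{eq:lowerboundwithchi} (supplemented by \eqref{eq:lowerboundwithchi2} when $P\in S^2(T^*\Sone)$) shows that $P_\mathrm{mass}^w$ dominates the Weyl quantization of $\min(1,\tfrac12((\xi-\xi_0)^2+\omega^2 x^2))\id_2$, whose symbol is microlocally elliptic away from the well and equals $1$ outside a compact set in phase space. A standard argument — approximating the truncated symbol by a smooth one which differs from $1$ only on a compact set (so that its Weyl quantization is a compact perturbation of the identity) and invoking sharp G\aa rding to absorb the smoothing error — yields $\inf\sigma_{\mathrm{ess}}(T_\mathrm{mass}^w)\ge 1-Ch\ge 1/2$ for $h$ small. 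Since $he_n+O(h^{3/2})<1/2$ for $h$ small, the min-max principle guarantees at least $n$ eigenvalues of $T_\mathrm{mass}^w$ below $1/2$ with $\lambda_n(h)\le he_n+O(h^{3/2})$, which delivers \eqref{eq:upper}.

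The main obstacle will be the $O(h^\infty)$ estimate on $\langle(1-G^w)\psi,\psi\rangle$, which hinges on the fact that the region $\{G\equiv 1\}$ inflates at rate $h^{-1/2}$ while the eigenfunctions $\varphi_k$ (and their Fourier transforms) remain Schwartz; a secondary subtlety is the essential-spectrum bound in the $S(1)$ case, where $P^w$ is globally bounded and the uniform positivity of $\inf\sigma_{\mathrm{ess}}(T_\mathrm{mass}^w)$ must be extracted entirely from the microlocal massive correction $1-\chi^w$ via \eqref{eq:lowerboundwithchi}.
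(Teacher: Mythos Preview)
Your argument is correct and in fact slightly sharper than the paper's, but it follows a different route in two places.

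First, you use the \emph{unlocalized} harmonic-oscillator eigenfunctions $\tilde\varphi_j$ as trial vectors and bound $h^{3/2}\langle R_0^w\psi,\psi\rangle$ directly by observing that $R_0\in S(\langle(y,\eta)\rangle^3)$ uniformly in $h$ implies $R_0^w$ is uniformly continuous $\mathscr S\to\mathscr S$, hence bounded on any fixed finite-dimensional Schwartz subspace. This yields an $O(h^{3/2})$ error. The paper instead localizes the trial functions, setting $\psi_n=J_1^w\tilde\varphi_n$ with $J_1(y,\eta)=J(h^{1/10}y)J(h^{1/10}\eta)$, and controls the remainder via the \emph{operator norm} bound $\lVert h^{3/2}J_1^wR_0^wJ_1^w\rVert=O(h^{6/5})$ of Lemma~\ref{lem:remainderboundL2}, at the price of tracking several commutators $[T_0^w,J_1^w]$ and near-orthogonality estimates \eqref{eq:bsimon1}. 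Your route is cleaner for the upper bound taken in isolation; the paper's localization, however, is not wasted effort --- the same cutoff (in the $x$-variables, as $\chi_1$) together with Lemma~\ref{lem:remainderboundL2} feeds into the IMS decomposition and Lemma~\ref{lemm:middleterm} used in the lower bound, Proposition~\ref{prop:lowerbound}, where an operator-level estimate rather than a finite-rank one is genuinely required.

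Second, for the essential spectrum you argue via \eqref{eq:lowerboundwithchi}, sharp G\aa rding, and Weyl's theorem on compact perturbations. The paper (in Lemma~\ref{thm:rayleighritz}, Step~3) instead writes $T_\mathrm{mass}^w-z=(T^w+1-z)(1-K(z))$ with $K(z)=(T^w+1-z)^{-1}G^w$ compact, and invokes analytic Fredholm theory. Both reach the same conclusion $\inf\sigma_{\mathrm{ess}}(T_\mathrm{mass}^w)\ge 1-Ch$; note that a shorter variant of your argument is also available: since $T^w$ is unitarily equivalent to $P^w\ge-Ch$ and $G^w$ is compact for each $h$, one has directly $\sigma_{\mathrm{ess}}(T_\mathrm{mass}^w)=\sigma_{\mathrm{ess}}(T^w+1)\subset[1-Ch,\infty)$.
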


\begin{proof}
Define 
\begin{equation}\label{eq:psin}
\psi_{2n+k}(y)=
J_1^w(y,D)\varphi_n(y)\begin{pmatrix}0&1\\1&0\end{pmatrix}^{k-1}(1,0)^t,\quad n\in\NN_0,\quad k=1,2.
\end{equation}
We claim that
\begin{equation}\label{eq:bsimon1}
(\psi_n,\psi_m)=\delta_{nm}+\mathcal{O}_{nm}(h^\infty)
\end{equation}
where $\delta_{nm}$ is the Kronecker delta. Clearly $(\psi_{2n},\psi_{2m+1})=0$ for all $n$ and $m$, and for pairings where both indices are either even or odd the claim follows from the definitions of $\varphi_n$ and $J_1$.
In fact, for $2n+k,2m+k$ both even or both odd we get 
$$
(\psi_{2n+k},\psi_{2m+k})=(\varphi_n,\varphi_m)-((1-J_1^w)\varphi_n,\varphi_m)-(J_1^w\varphi_n,(1-J_1^w)\varphi_m)
$$
where the first term on the right equals $\delta_{nm}$. 
We have 
$$
(1-J_1^w)\varphi_n(y)=\frac{1}{2\pi}\int e^{i(y-s)\eta}(1-J_1((y+s)/2,\eta))\varphi_n(s)\,ds\,d\eta
$$
where $\lvert h^{1/10}\eta\rvert\ge1$ if $1-J_1((y+s)/2,\eta)\not\equiv0$ due to \eqref{def:J1} and the definition of $J$. A standard integration by parts using $ih^{1/10}(h^{1/10}\eta)^{-1}\partial_s e^{i(y-s)\eta}=e^{i(y-s)\eta}$ then shows that
$$
\lvert((1-J_1^w)\varphi_n,\varphi_m)\rvert\le C_kh^{k/10}\lVert \varphi_n\rVert_{H^k(\RR)}\lVert \varphi_m\rVert_{L^2(\RR)}
$$
for $k\ge0$. Since $J_1^w$ is bounded on $L^2$ the same arguments show that
$$
\lvert(J_1^w\varphi_n,(1-J_1^w)\varphi_m)\rvert\le C_k\lVert \varphi_n\rVert_{L^2(\RR)}h^{k/10}\lVert \varphi_m\rVert_{H^k(\RR)}
$$
for $k\ge0$ which proves the claim. 

We also claim that
\begin{equation}\label{eq:bsimon2}
h^{-1}(T_\mathrm{mass}^w\psi_n,\psi_m)=e_n(\psi_n,\psi_m)+\mathcal{O}_{nm}(h^{1/5}).
\end{equation}
We prove this when $n,m$ are both odd (the case when they are both even is similar and when one is even and one is odd it is trivial). We have
\begin{align*}
h^{-1}(T^w\psi_{2n+1},\psi_{2m+1})&=(T_0^wJ_1^w(\varphi_n,0)^t,J_1^w(\varphi_m,0)^t)+h^\frac12(J_1^wR_0^wJ_1^w(\varphi_n,0)^t,(\varphi_m,0)^t),
\end{align*}
where the second term on the right is $\mathcal{O}(h^{1/5})$ by Lemma \ref{lem:remainderboundL2}.
To analyze the first term on the right we note that
$$
T_0^w J_1^w=J_1^wT_0^w+2[D_y,J_1^w]D_y+[D_y,[D_y,J_1^w]]+2\omega^2[y,J_1^w]y+\omega^2[y,[y,J_1^w]].
$$
Since $J_1^wT_0^w(\varphi_n,0)^t=e_n^1\psi_{2n+1}=e_{2n+1}\psi_{2n+1}$ by \eqref{eq:eigenvalueordering} we find, by using the Weyl calculus to compute the commutators, that
\begin{align*}
(T_0^wJ_1^w(\varphi_n,0)^t,J_1^w(\varphi_m,0)^t)&=e_{2n+1}(\psi_{2n+1},\psi_{2m+1})\\& \quad
+2((D_yJ_1)^wD_y\varphi_n,J_1^w\varphi_m) +((D_y^2J_1)^w\varphi_n ,J_1^w\varphi_m)\\& \quad
-2\omega^2((D_\eta J_1)^wy\varphi_n,J_1^w\varphi_m) +\omega^2((D_\eta^2J_1)^w\varphi_n ,J_1^w\varphi_m)
\end{align*}
where it is easy to see that the last four terms on the right are $\mathcal{O}_{nm}(h^\infty)$ since $\partial_y^kJ_1(y,\eta)=0$ when $\lvert y\rvert\le h^{-1/10}$ and $\partial_\eta^kJ_1(y,\eta)=0$ when $\lvert \eta\rvert\le h^{-1/10}$  for $k\ge1$. 
Hence,
\begin{equation*}
h^{-1}(T^w\psi_{2n+1},\psi_{2m+1})=e_{2n+1}(\psi_{2n+1},\psi_{2m+1})+\mathcal{O}_{nm}(h^{1/5}).
\end{equation*}
By arguments similar to those used to obtain \eqref{eq:bsimon1} we see that we may replace $T^w$ by $T_\mathrm{mass}^w=T^w+(1-G^w)\id_{\CC^{2\times 2}}$ by absorbing the term in the remainder, which gives \eqref{eq:bsimon2}.
In fact, the symbol $1-G$ is bounded on $\RR$ and on $\supp(1-G)$ we have $\lvert h^{1/2}\eta\rvert\ge\const$ by \eqref{eq:suppcondG} so an integration by parts using $ih^\frac12(h^\frac12\eta)^{-1}\partial_s e^{i(y-s)\eta}=e^{i(y-s)\eta}$ gives
\begin{equation}\label{eq:negligible}
\lvert((1- G^w)\psi_n,\psi_m)\rvert\le C_kh^{k/2}\lVert \psi_n\rVert_{H^k(\RR)}\lVert \psi_m\rVert
\end{equation}
for $k\ge0$. This proves the claim.

As in \cite{simon1983semiclassical} (see also \cite[Chapter 11]{cycon2009schrodinger}), we conclude from \eqref{eq:bsimon1}, \eqref{eq:bsimon2} and the Rayleigh-Ritz principle proved in Lemma \ref{thm:rayleighritz} that $T_\mathrm{mass}^w$ has at least $n$ eigenvalues, and that if $\lambda_n(h)$ denotes the $n$:th eigenvalue counting multiplicity, then \eqref{eq:upper} holds.
\end{proof}

We now turn to a lower bound, which combined with Proposition \ref{prop:upper} gives Theorem \ref{thm:stability}.

\begin{prop}\label{prop:lowerbound}
With notation and assumptions as in Theorem \ref{thm:stability}, we have
$$
\varliminf_{h\to0^+}\lambda_n(h)/h\ge e_n.
$$
\end{prop}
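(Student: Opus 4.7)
The plan is to reverse-engineer the min-max argument used in Proposition \ref{prop:upper}: bound $T^w_\mathrm{mass}$ from below on the phase-space region complementary to $\supp J_1$ by a quantity much larger than $he_n$, so that any low-lying eigenfunction is essentially supported where $T^w_\mathrm{mass} \approx hT_0^w$, and then apply the min-max principle for $T_0^w$ to conclude.

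First I would apply the phase-space IMS formula from Lemma \ref{lem:IMS} with the cutoff $J_1$ from \eqref{def:J1} together with a complement $J_2$ satisfying $(J_1^w)^2 + (J_2^w)^2 = \id + \mathcal{O}(h^\infty)$ microlocally, obtaining
\[
(T^w_\mathrm{mass} u, u) = (T^w_\mathrm{mass} J_1^w u, J_1^w u) + (T^w_\mathrm{mass} J_2^w u, J_2^w u) + \mathcal{O}(h^\alpha)\lVert u\rVert^2
\]
for some $\alpha > 1$; the error stems from commutators of $J_1^w, J_2^w$ with $T^w_\mathrm{mass}$ together with the estimate $\partial^k J_1 = \mathcal{O}(h^{k/10})$. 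On $\supp J_1$ the mass term $1 - G$ vanishes, and Lemma \ref{lem:remainderboundL2} gives $J_1^w T^w_\mathrm{mass} J_1^w = h J_1^w T_0^w J_1^w + \mathcal{O}(h^{6/5})$.

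The next step is to control the $J_2$-piece from below. On $\supp J_2$, at least one of $\lvert y\rvert, \lvert \eta\rvert$ is $\gtrsim h^{-1/10}$, so the rescaled pointwise bound from \eqref{eq:lowerboundwithchi} (respectively \eqref{eq:lowerboundwithchi2} when $P \in S^2(T^*\Sone)$) gives $T(y,\eta) + 1 - G(y,\eta) \ge \min(1, \tfrac12 h(\eta^2 + \omega^2 y^2))\id_2$. Promoting this to an operator inequality via the sharp G\aa rding inequality for systems yields
\[
(T^w_\mathrm{mass} J_2^w u, J_2^w u) \ge c h^{4/5}\lVert J_2^w u\rVert^2 - Ch\lVert u\rVert^2,
\]
and combining everything one obtains, for some $\beta > 0$,
\[
(T^w_\mathrm{mass} u, u) \ge h(T_0^w J_1^w u, J_1^w u) + c h^{4/5}\lVert J_2^w u\rVert^2 - Ch^{1+\beta}\lVert u\rVert^2.
\]

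Finally, I would close by a min-max argument that reverses Proposition \ref{prop:upper}. Suppose for contradiction that $\liminf \lambda_n(h)/h = e' < e_n$; then along some subsequence $h_j \to 0^+$ there exist orthonormal eigenfunctions $\phi_{1,j}, \ldots, \phi_{n,j}$ of $T^w_\mathrm{mass}$ with $(T^w_\mathrm{mass}\phi_{k,j}, \phi_{k,j}) \le h_j(e' + \delta)$ for $\delta = (e_n - e')/4$. The lower bound above forces $\lVert J_2^w \phi_{k,j}\rVert = \mathcal{O}(h_j^{1/10})$, so $J_1^w$ maps $\Span\{\phi_{k,j} : 1 \le k \le n\}$ to an $n$-dimensional subspace of $L^2(\RR;\CC^2)$ on which the Rayleigh quotient of $T_0^w$ is at most $e' + 2\delta$. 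By the min-max principle for $T_0^w$ this forces $e_n \le e' + 2\delta < e_n$, a contradiction. The main technical obstacle is verifying the $J_2$-lower bound when $P \in S^2(T^*\Sone)$, since in that case $R_0^w$ is not globally a small perturbation and a pointwise symbol bound is not enough; this is precisely why assumption \eqref{eq:lowerboundS2} was imposed.
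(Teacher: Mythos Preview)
Your strategy is the paper's: phase-space IMS localization, reduction of the inner piece to $hT_0^w$, and a G\aa rding lower bound on the outer piece; the paper works with $P^w_\mathrm{mass}(x,hD)$ and the cutoffs $X_0^w,X_1^w$ of Lemma~\ref{lem:IMS} rather than $T^w_\mathrm{mass}$ and $J_1^w,J_2^w$, but this is just unitary equivalence. The closing min-max step differs: the paper fixes $r\in(e_n,e_{n+1})$, inserts the rank-$n$ spectral projection $P_n$ of $H_0^w$ onto eigenvalues below $hr$, uses the exact operator inequality $X_1^wH_0^wX_1^w\ge X_1^w(H_0^w-hr)P_nX_1^w+hr(X_1^w)^2$ together with~\eqref{eq:lowerbound2}, and obtains $P^w_\mathrm{mass}\ge hr+R+o(h)$ with $\operatorname{rank}R\le n$; then any $\psi\in\ker R$ in the span of the first $n+1$ eigenvectors gives $\lambda_{n+1}\ge hr+o(h)$ directly. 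You instead argue by contradiction, pushing the first $n$ eigenfunctions of $T^w_\mathrm{mass}$ through $J_1^w$ to produce an $n$-dimensional test space for $T_0^w$.

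There is an inconsistency in your error bookkeeping that, as written, breaks the contradiction. Your G\aa rding step produces error $-Ch\lVert u\rVert^2$, yet the combined inequality you then claim carries $-Ch^{1+\beta}\lVert u\rVert^2$. With only the $O(h)$ error one obtains, for normalized $u$ in the span of the first $n$ eigenfunctions,
\[
(T_0^wJ_1^wu,J_1^wu)\le e'+\delta+C
\]
with $C$ a fixed constant, and no contradiction with $e_n$ follows. What is actually needed on the outer piece is the analogue of the paper's~\eqref{eq:lowerbound2}, namely $J_2^wT^w_\mathrm{mass}J_2^w\ge hr(J_2^w)^2$ with at most an $o(h)$ loss; granted that (which is the nontrivial G\aa rding input both arguments rely on, and which uses that the cutoff sits in $S_{2/5}$ together with the $h^{4/5}$ slack from the quadratic well), your combined error becomes $O(h^{6/5})$ and the contradiction closes, yielding a proof equivalent to the paper's. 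The rank-$n$ correction in the paper is not what rescues the error analysis---both routes need the same control on the outer region---but it does give a cleaner direct argument without the detour through a hypothetical subsequence.
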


For the proof it will be more convenient to work with the unitarily equivalent $P_\mathrm{mass}^w(x,hD)$ rather than $T_\mathrm{mass}^w(y,D)$, where we will use a pseudodifferential version of the IMS localization formula (so dubbed by Barry Simon \cite{simon1983semiclassical} after Ismagilov, Morgan, Simon and I. M. Sigal).
To state it we let
\begin{equation}\label{eq:chi1}
\chi_1(x,\xi)=J_1(h^{-\frac12}x,h^{-\frac12}\xi)=J(h^{-2/5}x)J(h^{-2/5}\xi),
\end{equation}
so that $\chi_1$ is supported for $\lvert x\rvert,\lvert \xi\rvert\le 2h^{2/5}$ with $\chi_1\equiv 1$ when $\lvert x\rvert,\lvert \xi\rvert\le h^{2/5}$.\footnote{The precise value $2/5$ of the exponent is not important -- what is needed is that $\chi_1$ is supported in a ball or radius $\sim h^\nu$ for some $\frac13<\nu<\frac12$.}
With respect to the standard rescaling we have on operator level that 
$$
J_1^w(y,D_y)=(\gamma^{-1})^*\circ\chi_1^w(x,hD_x)\circ\gamma^*.
$$
We observe that $\chi_1\in S_{2/5}^{0,-\infty}(T^*\RR)$, so $\chi_1^w(x,hD)\in \Psi_{2/5}^{0,-\infty}(\RR)$.
Next, define $\chi_0\in C^\infty(T^*\RR)$ by the condition that 
\begin{equation}\label{eq:pou}
(\chi_0(x,\xi))^2+(\chi_1(x,\xi))^2=1.
\end{equation}
By definition we then have
\begin{equation}\label{eq:suppchi0}
(x,\xi)\in\supp{\chi_0}\quad \Longrightarrow\quad\lvert x\rvert,\lvert\xi\rvert\ge h^{2/5}.
\end{equation}

\begin{lemm}[IMS]\label{lem:IMS}
Let $\chi_0$ and $\chi_1$ be as above. Then there are $X_0^w(x,hD)\in\Psi_{2/5}^{0,0}(\RR)$ and $X_1^w(x,hD)\in\Psi_{2/5}^{0,-\infty}(\RR)$ such that 
$X_j=\chi_j$ modulo $S^{-\infty,-\infty}(T^*\RR)$ and
$$\displaystyle P_{\operatorname{mass}}^w=\sum_{k=0}^1 X_k ^w  P_{\operatorname{mass}}^w  X_k^w+\mathcal{O}_{L^2(\RR)\to L^2(\RR)}(h^{6/5}).$$
\end{lemm}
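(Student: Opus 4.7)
The plan is to take $X_k = \chi_k$ outright (the congruence $X_j \equiv \chi_j \pmod{S^{-\infty,-\infty}}$ is then trivial); choosing $J$ in \eqref{def:J1} via a standard cosine-type construction ensures $\chi_0 = \sqrt{1-\chi_1^2} \in S_{2/5}^{0,0}$ is smooth while $\chi_1 \in S_{2/5}^{0,-\infty}$. The core of the argument is the exact operator identity
\begin{equation*}
\sum_{k=0}^1 \chi_k^w A \chi_k^w = \tfrac{1}{2}\sum_k(\chi_k^{w,2}A + A\chi_k^{w,2}) - \tfrac{1}{2}\sum_k[\chi_k^w,[\chi_k^w,A]]
\end{equation*}
applied to $A = P_{\operatorname{mass}}^w$. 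This reduces the proof to showing (i) $\sum_k \chi_k^{w,2}P_{\operatorname{mass}}^w - P_{\operatorname{mass}}^w$ and its adjoint are $\mathcal{O}_{L^2\to L^2}(h^{6/5})$, and (ii) each double commutator is $\mathcal{O}_{L^2\to L^2}(h^{6/5})$.

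Both estimates rely on two observations. First, all the symbols in question are supported in the annulus $\Omega_h := \{h^{2/5} \le |(x,\xi)|_\infty \le 2h^{2/5}\}$ where the derivatives of $\chi_k$ concentrate. Second, on $\Omega_h$ the quadratic vanishing $P_0(x,\xi) = \xi^2 + \omega^2 x^2 + \mathcal{O}(|(x,\xi)|^3)$, together with $\chi \equiv 1$ (so $P_{\operatorname{mass}} = P$ there for small $h$), implies $P_{\operatorname{mass}} = \mathcal{O}(h^{4/5})$, $\partial P_{\operatorname{mass}} = \mathcal{O}(h^{2/5})$, and $\partial^\alpha P_{\operatorname{mass}} = \mathcal{O}(1)$ for $|\alpha|\ge 2$.

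For (i) I write $\sum_k \chi_k^{w,2} = I + r^w$ with $r = \sum_k(\chi_k\#\chi_k - \chi_k^2)$. The $\mathcal{O}(h)$ Moyal term vanishes by antisymmetry of the Poisson bracket, and since each derivative of $\chi_k$ costs $h^{-2/5}$ we have $r \in h^{2/5}S_{2/5}^{0,-\infty}$ with support in $\Omega_h$. The naive bound $\|r^w\|\lesssim h^{2/5}$ is too weak (and fails altogether in the $S^2$ case where $P_{\operatorname{mass}}^w$ is unbounded), so I estimate the composed symbol $r\# P_{\operatorname{mass}}$ directly. Its leading product $r\cdot P_{\operatorname{mass}}$ is $\mathcal{O}(h^{2/5}\cdot h^{4/5}) = \mathcal{O}(h^{6/5})$ on $\Omega_h$, and the subsequent Moyal terms ($h\{r,P_{\operatorname{mass}}\}\sim h^{7/5}$, $h^2\partial^2 r\cdot\partial^2 P_{\operatorname{mass}}\sim h^{8/5}$, \ldots) are smaller, so $r\# P_{\operatorname{mass}} \in h^{6/5}S_{2/5}^{0,-\infty}$ and Calder\'on--Vaillancourt yields $\|r^w P_{\operatorname{mass}}^w\| = \|(r\# P_{\operatorname{mass}})^w\| = \mathcal{O}(h^{6/5})$; the term $P_{\operatorname{mass}}^w r^w$ is handled identically. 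For (ii), Moyal gives $[\chi_k^w,[\chi_k^w,P_{\operatorname{mass}}^w]] = -h^2\{\chi_k,\{\chi_k,P_{\operatorname{mass}}\}\}^w + \mathcal{O}(h^4)^w$, whose leading symbol is supported in $\Omega_h$ with dominant contribution $(\partial\chi_k)^2 \partial^2 P_{\operatorname{mass}} \sim h^{-4/5}$, giving $h^2\cdot h^{-4/5} = h^{6/5}$, and Calder\'on--Vaillancourt again delivers the required operator-norm bound.

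The hard part is step (i): the crude operator-norm estimate $\|r^w\|\|P_{\operatorname{mass}}^w\|$ only delivers $\mathcal{O}(h^{2/5})$, so the improvement to $\mathcal{O}(h^{6/5})$ forces one to work symbolically and exploit that the annular support of $r$ (where $\chi_k$ fails to be locally constant) coincides precisely with the region where the quadratic vanishing of $P_0$ makes $P_{\operatorname{mass}}$ small. This is the structural reason why the lemma is tailored to the massive operator $P_{\operatorname{mass}}^w$ and not stated for an arbitrary operator in $\Psi^m$.
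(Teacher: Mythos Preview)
Your approach is correct and takes a genuinely different route from the paper's. The paper does \emph{not} set $X_j=\chi_j$; instead it invokes an auxiliary lemma (Lemma~\ref{lem:pou}) which iteratively corrects $\chi_j$ to symbols $X_j$ with $(X_0^w)^2+(X_1^w)^2=I$ modulo $\Psi^{-\infty,-\infty}$. With this exact quantum partition of unity in hand, the IMS identity reduces the whole lemma to the single double-commutator estimate $[X_k^w,[X_k^w,P_{\mathrm{mass}}^w]]=\mathcal O(h^{6/5})$, proved exactly as in your step~(ii). Your choice $X_j=\chi_j$ bypasses that construction but produces a defect $r^w=\sum_k(\chi_k^w)^2-I$ that is only $\mathcal O(h^{2/5})$, forcing your step~(i): you recover $r^wP_{\mathrm{mass}}^w=\mathcal O(h^{6/5})$ by exploiting that $r$ is supported (modulo $\mathcal O(h^\infty)$) on the annulus where the well structure makes $P_{\mathrm{mass}}=\mathcal O(h^{4/5})$. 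Both arguments ultimately rest on the same structural input, namely $\partial P_0=\mathcal O(h^{2/5})$ on $\supp\partial\chi_k$; yours is more self-contained, the paper's more modular. Two minor remarks: the remainder ``$\mathcal O(h^4)^w$'' in your (ii) is mislabeled in the $S_{2/5}$ calculus (each Moyal order gains only $h^{1/5}$), though the omitted terms are in fact $\mathcal O(h^{8/5})$ so the conclusion stands; and the smoothness of $\chi_0=\sqrt{1-\chi_1^2}$ for the tensor product $\chi_1=J(h^{-2/5}x)J(h^{-2/5}\xi)$ needs $1-J^2$ to vanish to infinite order at $|t|=1$ to avoid corner singularities, a detail neither you nor the paper spell out.
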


As with $\chi^w$ we shall, to shorten notation, always understand $X_0^w$ and $ X_1^w$ to mean semiclassical Weyl quantizations in the variable $x$ (as in, e.g., $X_0^w(x,hD)$).

Recall from \eqref{eq:Texpansion} and \eqref{eq:normalformpullback2} that $P^w=\gamma^* \circ(hT_0^w+h^{3/2}R_0^w)\circ(\gamma^{-1})^*$ and let us write $H_0^w=\gamma^*\circ (hT_0^w)\circ(\gamma^{-1})^*$, so that $H_0^w$ has a complete set of eigenfunctions $\{\phi_k(h)\}_{n=1}^\infty$ given by $\phi_k(h)=h^{-\frac14}\gamma^*\tilde\varphi_n$, where 
\begin{equation}\label{eq:eigenbasisH0}
\tilde\varphi_{2n+k}(y)=\varphi_n(y)\sigma_1^{k-1}(1,0)^t
\end{equation}
for $n\in\NN_0$ and $k=1,2$.
By Lemma \ref{lem:IMS} we then have
\begin{equation}\label{eq:beforesummingup}
P_\mathrm{mass}^w= X_0^w P_\mathrm{mass}^w  X_0^w+ X_1^w( P_\mathrm{mass}^w-H_0^w) X_1^w+ X_1^w H_0^w X_1^w+\mathcal{O}(h^{6/5}).
\end{equation}
The middle term is $\mathcal O(h^{6/5})$, too:

\begin{lemm}\label{lemm:middleterm}
\label{lem:remainderboundL2periodic}
For $X_1^w$ as in Lemma \ref{lem:IMS} and $H_0^w$ as above, we have
$\displaystyle
\lVert  X_1^w( P_\mathrm{mass}^w-H_0^w) X_1^w\rVert =\mathcal{O}(h^{6/5})
$
in $L^2(\RR;\CC^2)$.
\end{lemm}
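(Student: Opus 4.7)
The plan is to decompose
$$
X_1^w(P_{\operatorname{mass}}^w-H_0^w)X_1^w = X_1^w(P^w-H_0^w)X_1^w + X_1^w(1-\chi^w)X_1^w
$$
according to the definition $P_{\operatorname{mass}}^w = P^w + (1-\chi^w)\id$ and bound the two pieces separately. By Lemma \ref{lem:IMS} we have $X_1^w = \chi_1^w$ modulo $\Psi^{-\infty,-\infty}(\RR)$, so up to an $\mathcal{O}(h^\infty)$ error we may replace $X_1^w$ by $\chi_1^w$ throughout.

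For the first piece I would reduce the estimate to the $y$-variable setting in which Lemma \ref{lem:remainderboundL2} is formulated. Introduce the unitary operator $U = h^{-1/4}\gamma^\ast$ on $L^2(\RR)$ (so that $U^\ast v(y)=h^{1/4}v(h^{1/2}y)$). A direct change of variables in the Weyl quantization formula, of the same type carried out just before Proposition \ref{prop:normalform}, shows that
$$
\chi_1^w(x,hD) = U\,J_1^w(y,D)\,U^\ast,
\qquad
P^w - H_0^w = U\bigl(h^{3/2}R_0^w\bigr)U^\ast,
$$
the first identity following from $\chi_1(x,\xi)=J_1(h^{-1/2}x,h^{-1/2}\xi)$ in \eqref{eq:chi1} and the second from $T^w = hT_0^w + h^{3/2}R_0^w$. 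Consequently
$$
\chi_1^w(P^w-H_0^w)\chi_1^w = U\bigl(h^{3/2}J_1^w R_0^w J_1^w\bigr)U^\ast,
$$
and unitarity of $U$ together with Lemma \ref{lem:remainderboundL2} gives the bound $\mathcal{O}(h^{6/5})$.

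For the second piece, recall that $\chi\in C^\infty_0(T^\ast\RR)$ is independent of $h$ and $\chi\equiv 1$ in a fixed neighborhood of $(0,\xi_0)=(0,0)$, whereas $\chi_1$ is supported in $\{|x|,|\xi|\le 2h^{2/5}\}$. Hence for $h$ sufficiently small the supports of $1-\chi$ and $\chi_1$ are disjoint, so every term in the asymptotic expansion of the Moyal product $(1-\chi)\sharp\chi_1$ vanishes identically. Standard symbolic calculus in the class $S_{2/5}$ then yields $\chi_1^w(1-\chi^w)\in\Psi^{-\infty,-\infty}$ with symbol $\mathcal{O}(h^\infty)$ in every seminorm, so $\chi_1^w(1-\chi^w)\chi_1^w = \mathcal{O}_{L^2\to L^2}(h^\infty)$.

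Combining the two bounds and folding in the $\mathcal{O}(h^\infty)$ error from replacing $X_1^w$ by $\chi_1^w$ gives the claim. The only real subtlety is the clean unitary identification $\chi_1^w = UJ_1^wU^\ast$ together with $P^w-H_0^w = Uh^{3/2}R_0^wU^\ast$, which is what allows the $L^2(\RR_y)$-estimate of Lemma \ref{lem:remainderboundL2} to transfer verbatim to an $L^2(\RR_x)$-estimate; once this is in place the remaining steps are bookkeeping.
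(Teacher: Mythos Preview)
Your proof is correct and follows essentially the same route as the paper's: first strip off the massive term $1-\chi^w$ (the paper does this by citing \eqref{eq:suppcondX}, you do it via the disjoint-support argument), then use the unitary rescaling $U=h^{-1/4}\gamma^\ast$ to identify $\chi_1^w(P^w-H_0^w)\chi_1^w$ with $h^{3/2}J_1^wR_0^wJ_1^w$ and invoke Lemma \ref{lem:remainderboundL2}. Your version is in fact slightly more explicit than the paper's, which writes the conjugation identity directly with $X_1^w$ rather than $\chi_1^w$ and leaves the $X_1^w\!\leftrightarrow\!\chi_1^w$ replacement implicit.
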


We postpone the proofs of Lemmas \ref{lem:IMS} and \ref{lemm:middleterm} to Appendix \ref{sec:auxLemma}, and recall the following sharp semiclassical G\aa rding inequality for systems. This can for example be obtained from H\" ormander's Weyl calculus \cite[Theorem 6.8]{hormander1979weyl}, as we describe below for the reader's convenience.

\begin{lemm}\label{lem:Gårding}
Let $p\in S(1)$ and assume that $p(x,\xi)$ is a positive semi-definite matrix for all $(x,\xi)$. Then
\begin{equation}\label{eq:Gårding}
(p^w(x,hD)u,u)\ge-Ch\lVert u\rVert^2
\end{equation}
for some constant $C$.
\end{lemm}

\begin{proof}[Sketch of proof]
Use the standard rescaling $y=h^{-1/2}x$ and write $p^w(x,hD_x)u(x)=p_h^w(y,D_y)v(y)$ where $p_h(y,\eta)=p(h^\frac12 y,h^\frac12 \eta)$ and $u(x)=v(y)$. Then $p_h\in S(1)$ uniformly for $0<h<1$,  see the discussion on page \pageref{eq:hboundedset}. 
Let $g$ be the metric
$$
g_{x,\xi}(dx,d\xi)=h\lvert dx\rvert^2+h\lvert d\xi\rvert^2
$$
and let $g^\sigma$ be the dual metric of $g$ with respect to the symplectic form $\sigma$. The Weyl calculus then gives that $\sup g_{x,\xi}/g_{x,\xi}^\sigma=h^2$ and it is easy to see that $p_h$ belongs to the Weyl symbol class $S(1,g)$, see \cite[Definition 2.3]{hormander1979weyl}. Since $p_h/h\in S(1/h,g)$ is also positive semi-definite, we can apply \cite[Theorem 6.8]{hormander1979weyl} to $p_h/h$ and obtain a constant $C>0$ such that
\begin{equation*}
(p_h^w(y,D_y)v,v)\ge -Ch\lVert v\rVert^2.
\end{equation*}
A simple calculation then gives \eqref{eq:Gårding}.
\end{proof}

With these preparations at hand, we are now able to give the proof of the lower bound on the eigenvalue asymptotics.

\begin{proof}[Proof of Proposition \ref{prop:lowerbound}]
We first assume that $n\ge2$. It then suffices to prove the proposition when $n$ is even. Indeed, 
suppose it holds for even $n$, and recall from \eqref{eq:eigenvalueordering} that $e_{2m}=e_{2m+1}$ for $m\ge1$. Then
$$
\varliminf_{h\to0^+}\lambda_{2m+1}/h\ge \varliminf_{h\to0^+}\lambda_{2m}/h\ge e_{2m}=e_{2m+1},\quad m\ge1,
$$
which proves the claim. 

We will prove the statement in the proposition with $n$ replaced by $n+1$, so suppose therefore that $n$ is odd, and fix a number $e_n<r<e_{n+1}$ and let $P_n$ be the projection onto the eigenvalues below $rh$ for $H_0^w$ so that $P_n$ has rank $n$. It is then easy to see that
\begin{equation}\label{eq:lowerbound1}
 X_1^w H_0^w X_1^w\ge  X_1^w (H_0^w -hr)P_n  X_1^w+hr( X_1^w)^2.
\end{equation}

Next, note that if $P\in S(1)$ then the symbol of $P_\mathrm{mass}^w$ is semi-bounded from below by $Ch^{4/5}+\mathcal O(h)$ on $\supp\chi_0$ by \eqref{eq:lowerboundwithchi} and \eqref{eq:suppchi0} for some $C>0$.  
By a standard pseudodifferential cutoff argument together with the fact that $X_0^w=\chi_0^w$ mod $\Psi^{-\infty,-\infty}$, an 
 application of the sharp G\aa rding inequality for systems (Lemma \ref{lem:Gårding}) 
 then gives
$$
((P_\mathrm{mass}^w-Ch^{4/5}+\mathcal O(h))X_0^wu,X_0^wu)\ge -\widetilde Ch(X_0^wu,X_0^wu).
$$
For small $h$ we thus have
\begin{equation}\label{eq:lowerbound2}
X_0^wP_\mathrm{mass}^wX_0^w\ge hr (X_0^w)^2
\end{equation}
when $P\in S(1)$.
When $P\in S^2(T^*\RR)$ we instead use \eqref{eq:lowerboundwithchi2} to get
\begin{equation*}
(P_\mathrm{mass}^w X_0^wu,X_0^wu)\ge ((V+1-\chi^w)X_0^wu,X_0^wu)-\widetilde Ch(X_0^wu,X_0^wu),
\end{equation*}
where $V+1-\chi\in S(1)$ satisfies $V(x)+1-\chi(x,\xi)\ge Ch^{4/5}$ for $(x,\xi)\in\supp{\chi_0}$ by \eqref{eq:lowerboundwithchi3} and \eqref{eq:suppchi0} for some $C>0$.
We may thus apply Lemma \ref{lem:Gårding} to conclude that 
$$
((V+1-\chi^w-Ch^{4/5})X_0^wu,X_0^wu)\ge -\widetilde Ch(X_0^wu,X_0^wu)
$$
for some new $\widetilde C$.
This implies that \eqref{eq:lowerbound2} holds for $h$ small also when $P\in S^2(T^*\RR)$.

Summing up using \eqref{eq:beforesummingup} and $( X_0^w)^2+( X_1^w)^2=1$ mod $\Psi^{-\infty,-\infty}$ we get
$$
P_\mathrm{mass}^w\ge hr+R+o(h)
$$
where $R= X_1^w(H_0^w -hr)P_n X_1^w$ has rank at most $n$. We can then pick $\psi$ in the span of the first $n+1$ eigenvectors of $P_\mathrm{mass}^w$ with $\lVert\psi\rVert=1$ and $\psi\in\ker R$. Then
$$
\lambda_{n+1}\ge (P_\mathrm{mass}^w\psi,\psi)\ge hr+o(h)
$$
and, since $r\in(e_n,e_{n+1})$ was arbitrary, this shows that $\varliminf_{h\to0^+}\lambda_{n+1}/h\ge e_{n+1}$.

It remains to consider $n=1$. However, inspecting the arguments above we see that if we fix $r<e_1=0$ then \eqref{eq:lowerbound1}--\eqref{eq:lowerbound2} hold trivially, and $R=0$, so $\lambda_1\ge hr+o(h)$ from which the result easily follows.
\end{proof}

By combining the ideas used in the proof of Corollary \ref{cor:periodicquasimodesnormalform} with the method used to prove Theorem \ref{thm:stability} it is possible to obtain a stability result for the eigenvalues of  $P_{\operatorname{mass}}^w$ also in the periodic setting when $P^w$ is viewed as a densely defined operator on $L^2(\mathbb T)$. Since it might be of independent interest we state such a result here but leave the proof to the interested reader.

\begin{thm}\label{thm:stability2}
Let $\Lambda_n(h)$ be the $n$:th eigenvalue, counting multiplicity, of $P_{\operatorname{mass}}^w$ viewed as a densely defined operator on $L^2(\mathbb T)$. Let $e_n$ be the $n$:th eigenvalue, counting multiplicity, of $T_0^w$ viewed as a densely defined operator on $L^2(\RR)$. Then for $n$ fixed and $h$ small, $P_{\operatorname{mass}}^w$ has at least $n$ eigenvalues and
$$
\lim_{h\to0^+} \Lambda_n(h)/h=e_n.
$$
\end{thm}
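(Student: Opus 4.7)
The plan is to mirror the proof of Theorem \ref{thm:stability}, adapting each of its two halves---the upper bound (Proposition \ref{prop:upper}) and the lower bound (Proposition \ref{prop:lowerbound})---to the periodic setting. The key new ingredient will be the periodization procedure from Theorem \ref{thm:periodicquasimodesnormalform}, which transfers Schwartz objects on $\RR$ to smooth objects on $\Sone$ while introducing only $\mathcal{O}(h^\infty)$ errors coming from overlap between different periods.

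For the upper bound, starting from the Schwartz trial functions $\psi_n$ defined in \eqref{eq:psin}, I form the periodized quasimodes
$$
\Psi_n(x) = h^{-1/4}\sum_{k\in\ZZ} e^{i\xi_0(x-k)/h}\psi_n((x-k)h^{-1/2}),\quad n\ge1,
$$
exactly as in the proof of Theorem \ref{thm:periodicquasimodesnormalform}. The Peetre inequality argument used there shows that $(\Psi_n,\Psi_m)_{L^2(\Sone)}=(\psi_n,\psi_m)_{L^2(\RR)}+\mathcal{O}(h^\infty)$, so \eqref{eq:bsimon1} transfers immediately. For \eqref{eq:bsimon2}, combining $P_{\operatorname{mass}}^w\circ\gamma^*=\gamma^*\circ T_{\operatorname{mass}}^w$ with the periodization, together with the Schwartz decay of the functions $T_{\operatorname{mass}}^w\psi_n$ appearing on the right-hand side there, gives
$$
h^{-1}(P_{\operatorname{mass}}^w\Psi_n,\Psi_m)_{L^2(\Sone)}=e_n(\Psi_n,\Psi_m)_{L^2(\Sone)}+\mathcal{O}(h^{1/5}).
$$
The Rayleigh--Ritz principle (Lemma \ref{thm:rayleighritz}) then yields both the existence of at least $n$ eigenvalues of $P_{\operatorname{mass}}^w$ on $L^2(\Sone)$ and the upper bound $\varlimsup_{h\to 0^+}\Lambda_n(h)/h\le e_n$.

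For the lower bound, I apply the IMS localization formula of Lemma \ref{lem:IMS} directly to $P_{\operatorname{mass}}^w$ acting on $L^2(\Sone)$. The key observation is that the cutoff $\chi_1$ from \eqref{eq:chi1} is supported in an $h^{2/5}$-neighborhood of $(0,\xi_0)$ in $T^*\RR$, so for $h$ small enough the support of $\chi_1$ fits inside a single fundamental domain in the base variable. Consequently the symbolic calculus producing $X_0^w,X_1^w$ and the estimates in Lemma \ref{lemm:middleterm} are insensitive to whether we view the operator on $L^2(\RR)$ or $L^2(\Sone)$. I then run the same argument as in Proposition \ref{prop:lowerbound}: fix $r\in(e_n,e_{n+1})$, bound $X_1^wH_0^wX_1^w$ from below by $X_1^w(H_0^w-hr)P_n X_1^w+hr(X_1^w)^2$ with $P_n$ the spectral projection of $H_0^w$ onto eigenvalues below $hr$, bound $X_0^w P_{\operatorname{mass}}^w X_0^w\ge hr(X_0^w)^2$ via the sharp G\aa rding inequality using \eqref{eq:lowerboundwithchi} (or \eqref{eq:lowerboundwithchi2} when $P\in S^2(T^*\Sone)$), and deduce via the rank-$n$ argument that $\varliminf_{h\to 0^+}\Lambda_{n+1}(h)/h\ge e_{n+1}$. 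The case $n=1$ is handled as before by choosing $r<e_1$.

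The main obstacle is to verify that the micro-local objects---in particular $H_0^w$, which is \emph{not} periodic---remain meaningful when sandwiched between $X_1^w$'s on $L^2(\Sone)$. This is resolved by noting that once $X_1^w$ is applied, everything lives inside a single fundamental domain, so $X_1^w H_0^w X_1^w$ and $X_1^w P_{\operatorname{mass}}^w X_1^w$ can be identified with their counterparts on $L^2(\RR)$ via the natural embedding of functions supported in that domain. A secondary technical point is the $\mathcal{O}(h^\infty)$ bookkeeping in the upper bound, but this is identical to what is already carried out in the proof of Theorem \ref{thm:periodicquasimodesnormalform} and requires no new ideas.
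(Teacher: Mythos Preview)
Your proposal is correct and follows precisely the route the paper indicates: the paper does not actually prove Theorem \ref{thm:stability2} but states that it follows ``by combining the ideas used in the proof of Theorem \ref{thm:periodicquasimodesnormalform} with the method used to prove Theorem \ref{thm:stability}'', which is exactly what you do---periodize the trial functions $\psi_n$ via the construction of Theorem \ref{thm:periodicquasimodesnormalform} for the upper bound, and rerun the IMS/G\aa rding argument of Proposition \ref{prop:lowerbound} on $L^2(\Sone)$ for the lower bound, using that $\supp\chi_1$ fits inside a single fundamental domain so the non-periodic object $H_0^w$ becomes meaningful when sandwiched between $X_1^w$'s. Your identification of the one genuinely new technical point (making sense of $X_1^w H_0^w X_1^w$ periodically) and its resolution is apt.
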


\subsection{Asymptotic series}
We now use Theorem \ref{thm:stability} to prove asymptotic expansions of eigenvalues and eigenvectors of $T^w(y,D)$ on $\RR$.
We first consider the massive operator $T_\mathrm{mass}^w=T^w(y, D)+1 -G^w(y,D)$. We assume that the Weyl symbol $T$ satisfies the conditions in Proposition \ref{prop:normalform}. In particular, writing $z=(y,\eta)$ we then have
$$
T(z) =h (T_0(z)+Q_m(z)+R_m(z)),
$$
with matrix norm estimates
\begin{equation}\label{eq:sizeZ}
\lVert Q_m(z) \rVert = \mathcal{O}(h^{1/2}\langle z \rangle^m) \quad\text{ and }\quad \lVert R_m(z) \rVert = \mathcal{O}( (h^{1/2}\langle z \rangle)^{m+1}).
\end{equation}

Recall that $T_0(z)=(\eta^2+\omega^2y^2)\id_2+\omega\diag(-1,1)$ and let $e_n$ be the $n$:th eigenvalue of $T_0^w$ on $\RR$. Choose $\varepsilon$ (depending only on $\omega$) so that for each $m$, either $e_m=e_n$ or $\lvert e_m-e_n\rvert\ge \varepsilon$. Let
\begin{equation}\label{eq:projection}
\Pi_n(h) = \frac{1}{2\pi i} \oint_{\partial B_{\varepsilon}(e_n)} (\zeta-T_\mathrm{mass}^w(y,D)/h)^{-1} d\zeta
\end{equation}
be the projection onto the span of all eigenvectors with eigenvalues such that $\lambda_m(h)/h\to e_n$. (Since $e_1$ is simple while $e_n$ is double for $n\ge2$, we then have $m=n$ or $m=n\pm 1$.) Let $\psi_{2n+k}=J_1^w\varphi_{n}\sigma_1^{k-1}(1,0)^t$ in accordance with \eqref{eq:psin}. We will use a version of \cite[Theorem 2.3]{simon1983semiclassical} proved in the same way which we state here using our notation.

\begin{lemm}\label{lem:perturbation}
$\displaystyle \lVert (1-\Pi_n(h))\psi_n\rVert\to0\text{ as }h\to0$.
\end{lemm}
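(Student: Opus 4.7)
The plan is to follow the strategy of Simon's proof of \cite[Theorem 2.3]{simon1983semiclassical}: exploit the contour representation \eqref{eq:projection} of $\Pi_n(h)$ together with the spectral stability result Theorem \ref{thm:stability} to obtain a uniform resolvent bound for $T_\mathrm{mass}^w/h$ on $\partial B_\varepsilon(e_n)$, and combine it with the fact that $\psi_n$ is an approximate eigenfunction of $T_\mathrm{mass}^w$ with eigenvalue $h e_n$ up to an $L^2$-error that is $o(h)$.

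First I would establish the quasi-eigenvalue bound
\[
T_\mathrm{mass}^w(y,D)\psi_n - h e_n \psi_n = \mathcal O_{L^2}(h^{3/2}).
\]
This is essentially already contained in the proof of Proposition \ref{prop:upper}, except there the corresponding computation is taken in inner product with $\psi_m$ rather than kept in $L^2$. Splitting $T_\mathrm{mass}^w = hT_0^w + h^{3/2}R_0^w + (1 - G^w)\id$, the three contributions give: (i) $hT_0^w\psi_n = h e_n\psi_n + \mathcal O_{L^2}(h^\infty)$, because $[T_0^w, J_1^w]$ produces only terms involving $\partial_y^\alpha\partial_\eta^\beta J_1$ with $\lvert\alpha\rvert+\lvert\beta\rvert\ge 1$, whose symbols vanish outside $\{\lvert y\rvert\gtrsim h^{-1/10}\}\cup\{\lvert\eta\rvert\gtrsim h^{-1/10}\}$, a region where $\varphi_n$ and its derivatives are $\mathcal O(h^\infty)$; (ii) $\lVert h^{3/2}R_0^w\psi_n\rVert_{L^2} = \mathcal O(h^{3/2})$, using that $R_0\in S(\langle z\rangle^3)$ uniformly in $h$ and $\psi_n$ has $\mathscr S$-seminorms bounded uniformly in $h$; (iii) $(1 - G^w)\psi_n = \mathcal O_{L^2}(h^\infty)$ by the integration-by-parts estimate \eqref{eq:negligible}, since $\supp(1-G)$ is confined to $\{\lvert h^{1/2}\eta\rvert\ge\delta_0\}$ by \eqref{eq:suppcondG}.

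Setting $T_h = T_\mathrm{mass}^w/h$ and $s_n(h) := T_h\psi_n - e_n\psi_n$, so that $\lVert s_n(h)\rVert = \mathcal O(h^{1/2})$, the resolvent identity $(\zeta - T_h)^{-1}\psi_n = (\zeta - e_n)^{-1}\psi_n + (\zeta - e_n)^{-1}(\zeta - T_h)^{-1}s_n(h)$ together with $\frac{1}{2\pi i}\oint_{\partial B_\varepsilon(e_n)} (\zeta - e_n)^{-1}\,d\zeta = 1$ yields
\[
(\Pi_n(h) - 1)\psi_n = \frac{1}{2\pi i}\oint_{\partial B_\varepsilon(e_n)}(\zeta - e_n)^{-1}(\zeta - T_h)^{-1} s_n(h)\,d\zeta.
\]
The conclusion $\lVert(\Pi_n(h) - 1)\psi_n\rVert = \mathcal O(h^{1/2})\to 0$ then follows as soon as we have a uniform bound $\lVert(\zeta - T_h)^{-1}\rVert\le C(\varepsilon)$ for $\zeta\in\partial B_\varepsilon(e_n)$ and all sufficiently small $h$.

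This uniform resolvent bound is the main obstacle. Since $T_h$ is self-adjoint, $\lVert(\zeta - T_h)^{-1}\rVert = 1/\operatorname{dist}(\zeta,\Spec(T_h))$, so it suffices to show that $\Spec(T_h)$ stays at distance $\ge\varepsilon/2$ from $\partial B_\varepsilon(e_n)$ for small $h$. The choice of $\varepsilon$ guarantees $\lvert e_m - e_n\rvert\ge\varepsilon$ whenever $e_m\ne e_n$, and Theorem \ref{thm:stability} gives $\lambda_m(h)/h\to e_m$ for each fixed $m$, which handles any finite window of indices. To control the tail $m\to\infty$, I would fix $N$ with $e_N > e_n + 2\varepsilon$ and use the monotonicity $\lambda_m(h)\ge\lambda_N(h)$ together with $\lambda_N(h)/h\to e_N$ to push $\lambda_m(h)/h > e_n + \varepsilon$ for all $m\ge N$ and small $h$. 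This closes the argument.
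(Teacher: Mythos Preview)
Your proof is correct and follows exactly the approach the paper intends: the paper does not give its own argument but defers to Simon's proof of \cite[Theorem 2.3]{simon1983semiclassical}, and you have reproduced precisely that argument (approximate eigenfunction plus uniform resolvent bound on the contour via spectral stability) adapted to the present setting. One small point worth making explicit when you bound $\operatorname{dist}(\zeta,\Spec(T_h))$ from below: besides the eigenvalues $\lambda_m(h)/h$ you should also note that $\inf\sigma_{\mathrm{ess}}(T_\mathrm{mass}^w)\ge c>0$ (Step~3 of Lemma~\ref{thm:rayleighritz}), so that $\sigma_{\mathrm{ess}}(T_h)\subset[c/h,\infty)$ lies far outside $\overline{B_\varepsilon(e_n)}$ for small $h$.
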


Let $\tilde\varphi_{2n+k}=\varphi_{n}\sigma_1^{k-1}(1,0)^t$ as in \eqref{eq:eigenbasisH0} so that $T_0^w\tilde\varphi_n=e_n\tilde\varphi_n$ and 
$$
\tilde\varphi_{2n+k}=\psi_{2n+k}+(1-J_1^w)\varphi_{n}\sigma_1^{k-1}(1,0)^t
$$
then $\tilde\varphi_n=\psi_n+\mathcal{O}(h^\infty)$ in $L^2$ so $(1-\Pi_n(h))\tilde\varphi_n\to0$ as $h\to0$ by the lemma. It follows that 
\begin{equation}\label{eq:projectionlimit}
(\tilde\varphi_n,\Pi_n(h)\tilde\varphi_n)
=(\tilde\varphi_n,\tilde\varphi_n)-(\tilde\varphi_n,(1-\Pi_n(h))\tilde\varphi_n)
\to1,\quad h\to0.
\end{equation}

Note that under the assumption in \eqref{eq:eigenvalueordering}, the only simple eigenvalue of $T_0^w$ is $e_1=0$. For other values of $\mu_1,\mu_2$ in \eqref{eq:T0} this is of course not necessarily true. The following statement is written with this more general situation in mind.

\begin{prop}\label{thm:simple}
Let $e_n$ be a simple eigenvalue of $T_0^w$ on $\RR$. Let $\lambda_n(h)$ and $v(n,h)$ be the corresponding eigenvalue and eigenvector of $T_\mathrm{mass}^w$ on $\RR$. Then $\lambda_n(h)\sim he_n+h^{3/2}a^{(0)}_n+h^2a^{(1)}_n+\ldots$ in the sense that 
$$
\lambda_n(h)-he_n-\sum_{i=1}^m h^{(i+2)/2}a_n^{(i)}=\mathcal{O}(h^{(m+3)/2}),
$$
and $v(n,h)=v_0(n)+h^\frac12v_1(n)+hv_2(n)+\ldots$ in the sense that 
$$
v(n,h)-\sum_{i=0}^m h^{i/2}v_i(n)=\mathcal{O}_{\mathscr S}(h^{(m+1)/2}),
$$ 
where $v_0(n)=\varphi_{n',\omega}\sigma_1^{k-1}(1,0)^t$ with $n'\in\NN_0$ and $k\in\{1,2\}$ determined by $n=2n'+k$.
Moreover,  $v(n,h)\in\mathscr S$ and each $v_i(n,y;h)$ is a polynomial in $y$ times $e^{-\omega y^2/2}$.
\end{prop}

\begin{proof}
Let $\varphi\in L^2(\RR;\CC^2)$ be the eigenfunction corresponding to the simple eigenvalue $e_n$ of $T_0^w$ on $\RR$, i.e., $\varphi:=\tilde\varphi_n$ in the notation above. Then $B_{\varepsilon}(e_n)$, with $\varepsilon$ independent of $h$, contains precisely one eigenvalue and 
\[ \Pi(h) = \frac{1}{2\pi i} \oint_{\partial B_{\varepsilon}(e_n)} (\zeta-T_\mathrm{mass}^w(y,D)/h)^{-1} d\zeta\]
is the projection onto an eigenfunction $v(h) = \Pi(h) \varphi/\sqrt{\langle \varphi, \Pi(h) \varphi \rangle}$ corresponding to a single eigenvalue 
$$
\lambda (h)= \frac{\langle h^{-1} T_\mathrm{mass}^w \varphi, \Pi(h) \varphi \rangle}{\langle \varphi, \Pi(h) \varphi \rangle}
$$
of $T_\mathrm{mass}^w/h$ for $h$ small enough, see Theorem \ref{thm:stability}. The denominator is non-vanishing by \eqref{eq:projectionlimit}.
Due to our assumptions on $T$ we clearly have an asymptotic expansion of $T^w\varphi$, so in view of the expressions for $\Pi(h)$, $v(h)$ and $\lambda (h)$ we see that if we obtain an asymptotic expansion for $(T_\mathrm{mass}^w/h-\zeta)^{-1}\varphi$ which is uniform in $\zeta$ then we also get asymptotic expansions for $v(h)$ and $h\lambda (h)$. (The contribution of $h^{-1}(1-G^w) \varphi$ to $\lambda (h)$ is negligible by \eqref{eq:negligible}.)

We then use the standard geometric series 
\begin{equation}\label{eq:geometricseries}
(T_\mathrm{mass}^w/h-\zeta)^{-1} \varphi = \sum_{i=0}^m \psi_i + r_m,
\end{equation}
where $\psi_i =(-1)^i (T_0^w-\zeta)^{-1}((Z^w+(1-G^w)/h)(T_0^w-\zeta)^{-1})^i \varphi$ with $Z=T/h-T_0$, while
$$
r_m=(-1)^{m+1}(T^w/h-\zeta)^{-1} ((Z^w+(1-G^w)/h)(T_0^w-\zeta)^{-1})^{m+1}\varphi.
$$
As above we find by \eqref{eq:negligible} that the terms involving $1-G^w$ in $\psi_1,\ldots,\psi_m$ and $r_m$ all belong to $\mathscr S$ by the pseudodifferential calculus and are $\mathcal{O}(h^\infty)$ there uniformly for $h$ small, i.e., we can redefine $\psi_1,\ldots,\psi_m$ and $r_m$ so that \eqref{eq:geometricseries} holds with
$$
\psi_i =(-1)^i (T_0^w-\zeta)^{-1}(Z^w(T_0^w-\zeta)^{-1})^i \varphi
$$
and
$$
r_m=(-1)^{m+1}(T^w/h-\zeta)^{-1} (Z^w(T_0^w-\zeta)^{-1})^{m+1}\varphi+\tilde r_m,\quad \tilde r_m=\mathcal{O}_{\mathscr S}(h^\infty).
$$

Let $b\ge m+1$, then from \eqref{eq:sizeZ} follows that $Z^w(\langle z\rangle^{-b})^w$ is bounded on $L^2(\RR)$ with norm of size $\sqrt h$. We then rewrite $r_m$ using $Z_k^w:=Z^w(\langle z \rangle^{-kb})^w$ as
\begin{align*}
r_m -\tilde r_m&= (-1)^{m+1} (T^w/h-\zeta)^{-1} Z_1^w \Big[(\langle z \rangle^b)^w  (T_0^w-\zeta)^{-1} Z_2^w\Big]
\times \cdots \\ &\quad\cdots \times\Big[(\langle z \rangle^{mb})^w  (T_0^w-\zeta)^{-1} Z_{m+1}^w\Big]
(\langle z \rangle^{(m+1)b})^w (T_0^w-\zeta)^{-1}\varphi
\end{align*}
where the last factor $(\langle z \rangle^{(m+1)b})^w (T_0^w-\zeta)^{-1}\varphi$ belongs to $\mathscr S(\RR)$ since $\varphi $ is Schwartz.
Moreover, for $1\le i\le m$,
$$
(\langle z \rangle^{ib})^w  (T_0^w-\zeta)^{-1} Z_{i+1}^w=
(\langle z \rangle^{ib})^w  (T_0^w-\zeta)^{-1} \big[Z^w(\langle z\rangle^{-b})^w\big](\langle z\rangle^{-ib})^w
$$ is a bounded (even compact) operator on $L^2(\RR)$ by the pseudodifferential calculus, with norm of size $\sqrt h$. Hence, $r_m=\mathcal{O}_{\mathscr S}(h^{(m+1)/2})$.
If we then define $\psi_i'$ just like $\psi_i$ but with $Z^w$ replaced by $Q_m^w$, we get
\[   \psi_i' =(-1)^i (T_0^w-\zeta)^{-1}(Q_m^w(T_0^w-\zeta)^{-1})^i \varphi,\]
with $\lVert\psi_i'\rVert=\mathcal{O}(h^{i/2})$.
Then, by comparing the difference of $\psi_i$ and $\psi_i' $ we find by using $Z-Q_m =R_m$ that $\psi_i-\psi_i'$ is a finite sum of terms of the form
$$
(-1)^i(T_0^w-\zeta)^{-1}A_1^w(T_0^w-\zeta)^{-1}\cdots A_i ^w(T_0^w-\zeta)^{-1}\varphi
$$
where $A_k^w=R_m^w$ for at least one $1\le k\le i$. By \eqref{eq:lowerbound1}, each term above defines an element in $\mathscr S$ with $L^2$ norm of size $h^{(m+1)/2}$ since $R_m$ is order $h^{(m+1)/2}$. Hence, $ \lVert \psi_i-\psi_i' \rVert = \mathcal{O}(h^{(m+1)/2})$. By setting
$$
v_i(n,y;h)=-\frac{1}{2\pi i} \oint_{\partial B_{\varepsilon}(e_n)} \psi_i'(y;\zeta)/h^{i/2} d\zeta
$$
we obtain the desired expansions of $v(n,h)$.

It remains to prove that each $v_i(n,y;h)$ is a polynomial times $e^{-\omega y^2/2}$. To see that, note that $\langle z\rangle^w$ maps a polynomial times $e^{-\omega y^2/2}$ onto a polynomial times $e^{-\omega y^2/2}$. The same is true for $Q_m^w$ and also for $(T_0^w-\zeta)^{-1}$. Indeed, if $p$ is a polynomial and $(T_0^w-\zeta)^{-1}(p(y)e^{-\omega y^2/2},0)^t=(\psi(y;\zeta),0)^t$ then with $p(y)e^{-\omega \omega y^2/2}=\sum_{n=1}^N a_n\varphi_{n,\omega}(y)$ and $\psi(y;\zeta)=\sum_{n=1}^\infty b_n(\zeta)\varphi_{n,\omega}(y)$ we get
$$
\sum_{n=1}^N a_n\varphi_{n,\omega}=pe^{-\omega y^2}=(T_0^w-\zeta)\sum_{n=1}^\infty b_n(\zeta)\varphi_{n,\omega}=\sum_{n=1}^\infty b_n(\zeta)((2n+1)\omega+\mu_1 -\zeta )\varphi_{n,\omega}
$$
which, for $\zeta\in\partial B_\varepsilon(e_n)$, is only possible if $b_n(\zeta)\equiv 0$ for all $n\ge N'$ with $N'=N'(\mu_1)$ independent of $\zeta$. Hence $\psi$ equals $e^{-\omega y^2/2}$ times a polynomial of degree bounded independently of $\zeta$. It follows that each $\psi_i'$ and therefore also each $v_i(n,y;h)$ is a polynomial times $e^{-\omega y^2/2}$.
\end{proof}

We now turn to degenerate eigenvalues.

\begin{prop}\label{thm:doubleEV}
Let $e_n$ be an eigenvalue of $T_0^w$ on $\RR$ of multiplicity 2 with $e_n=e_{n+1}$ and let $\lambda _n(h),\lambda_{n+1} (h)$ be the eigenvalues of $T_\mathrm{mass}^w$ on $\RR$ which, when divided by $h$, tend to $e_n$. Then for $j\in\{n,n+1\}$ there exists an asymptotic expansion with coefficients $a_j^{(i)}$ of the form $$\lambda _j(h)\sim he_n+h^{3/2}a^{(0)}_j+h^2a^{(1)}_j+\ldots.$$
\end{prop}

\begin{proof}
Let $\tilde\varphi_j$ be as above so they span the eigenspaces of $T_0^w$ on $\RR$, and let $\Pi_n(h)$ be the projection  \eqref{eq:projection} onto the span of all eigenvectors of $T_\mathrm{mass}^w/h$ associated to eigenvalues approaching $e_n$ as $h\to0$. Since $e_n=e_{n+1}$ we thus have $\Pi_n(h)=\Pi_{n+1}(h)$. By \eqref{eq:projectionlimit} together with the proof of Proposition \ref{thm:simple} we see that $\Delta_{ij}(h):=(\tilde\varphi_i,\Pi_n(h)\tilde\varphi_j)=\delta_{ij}+\mathcal{O}(h^\frac12)$ for $i,j\in\{n,n+1\}$. In particular, $\Pi_n(h)\tilde\varphi_n$ and $\Pi_n(h)\tilde\varphi_{n+1}$ are linearly independent for $h$ small. We then let $\Delta^{-\frac12}$ be the square root of the inverse of $\Delta=(\Delta_{ij})_{i,j=1}^2$ which exists for $h$ small. Since $\Delta_{ij}$ has an asymptotic expansions by the proof of Proposition \ref{thm:simple}, and since the eigenvalues of a Hermitian matrix have asymptotic expansions in $h^{1/2}$ provided that the elements of the matrix do (see \cite[Lemma 5.2]{simon1983semiclassical}), it follows that $\Delta^{-\frac12}$ also has an asymptotic expansion.
(We diagonalize $\Delta^{-1}=UEU^*$ with $E$ diagonal consisting of the eigenvalues of $\Delta^{-1}$ which are positive for $h$ small. Then $\Delta^{-\frac12}=U\sqrt EU^*$.)

Write $C(h):=\Delta^{-\frac12}H\Delta^{-\frac12}$ with $H=(H_{ij})_{i,j=n}^{n+1}$ where $H_{ij}=(h^{-1}T_\mathrm{mass}^w\tilde\varphi_i,\Pi_n(h)\tilde\varphi_j)$ has an asymptotic expansion by the proof of Proposition \ref{thm:simple}. Hence, $C(h)$ has an asymptotic expansion, and thus the eigenvalues of $C(h)$ do as well, which we claim are precisely $\lambda_n$ and $\lambda_{n+1}$. 

We consider two cases for $h$ small but fixed: 

1) $\lambda_n(h)=\lambda_{n+1}(h)$. Then $H=\lambda_n\Delta$ so $C(h)=\lambda_n(h)\id_2$ which proves the claim in this case.

2) $\lambda_n(h)\ne\lambda_{n+1}(h)$. 
Since $\Pi_n(h)$ has rank 2 we can find orthonormal $v(n,h),v_{}(n+1,h)$ such that $\ran\Pi_n=\Span\{v(n),v(n+1)\}$ and $T_\mathrm{mass}^wv_j=\lambda_j v_j$ and thus
\begin{equation}
\label{eq:change_of_basis}
\begin{pmatrix} v(n)\\ v({n+1})\end{pmatrix}=D\begin{pmatrix} \Pi_n(h)\tilde\varphi_n\\ \Pi_n(h)\tilde\varphi_{n+1}\end{pmatrix}
\end{equation}
for some invertible transition matrix $D$. It is straightforward to check that $\id_2= D\Delta D^*$ so $\Delta^{-1}=D^*D$ and thus $\Delta^{-\frac12}=\sqrt{D^* D}$. Writing $D=(d_{ij})_{i,j=1}^2$ and using that $\lambda_i = ( T^w_\mathrm{mass} v_i,v_i)$ and \eqref{eq:change_of_basis}, we get
$$
\lambda_n=\lvert d_{11}\rvert^2 H_{nn}+2\Re(d_{11}\overline{d_{12}}H_{n(n+1)})+\lvert d_{12}\rvert^2 H_{(n+1)(n+1)}
$$
$$
\lambda_{n+1}=\lvert d_{21}\rvert^2 H_{nn}+2\Re(d_{21}\overline{d_{22}}H_{n(n+1)})+\lvert d_{22}\rvert^2 H_{(n+1)(n+1)}.
$$
Similarly, using $0 = ( T^w_\mathrm{mass} v(n),v({n+1})) = ( T^w_\mathrm{mass} v_{n+1},v_{n})$ and \eqref{eq:change_of_basis},
we obtain $\diag(\lambda_n,\lambda_{n+1})=DHD^*$ so
$$
H=D^{-1}\diag(\lambda_n,\lambda_{n+1})(D^{-1})^*.
$$
Hence, with $U=\sqrt{D^*D}D^{-1}$ we have since $\Delta^{-\frac12}=\sqrt{D^*D}$ is self-adjoint that
\begin{equation}\label{eq:C}
C(h)=\Delta^{-\frac12}H\Delta^{-\frac12}=U\diag(\lambda_n,\lambda_{n+1})U^*.
\end{equation}
Now observe that 
\[UU^* = \sqrt{D^*D}D^{-1}( D^*)^{-1}\sqrt{D^*D} = (D^*D)^{1/2}  (D^*D)^{-1/2}  (D^*D)^{-1/2}  (D^*D)^{1/2} = \id\] and also  
\[U^*U= (D^*)^{-1}\sqrt{D^*D} \sqrt{D^*D}D^{-1} = (D^*)^{-1}(D^*D)D^{-1} = \id.\]
So $U$ is unitary and therefore $U^*=U^{-1}$, which in view of \eqref{eq:C} means that $C(h)$ has eigenvalues $\lambda_n,\lambda_{n+1}$.
\end{proof}

\begin{prop}\label{thm:eigenvectorsofdoubleEV}
Let $e_n$ be an eigenvalue of $T_0^w$ on $\RR$ of multiplicity 2 with $e_n=e_{n+1}$ and let $\lambda _n(h),\lambda_{n+1} (h)$ be the eigenvalues of $T_\mathrm{mass}^w$ on $\RR$ which, when divided by $h$, tend to $e_n$. If $v(n,h)$ and $v(n+1,h)$ are the corresponding eigenvectors of $T_\mathrm{mass}^w$ then $v(n),v({n+1})\in\mathscr S$ and have asymptotic series in $\sqrt h$ to any order, and each term in the expansions is a polynomial in $y$ times $e^{-\omega y^2/2}$.
\end{prop}

\begin{proof}
We recall that $\mu_1-\mu_2=-2\omega$, which, since $e_n=e_{n+1}$, means that $n$ is even so $n=2n'+2$ for some integer $n'$. With $\tilde\varphi_{n}=\tilde\varphi_{2n'+2}=\varphi_{n'}(0,1)^t$ and $\tilde\varphi_{n+1}=\tilde\varphi_{2(n'+1)+1}=\varphi_{n'+1}(1,0)^t$ we then have $T^w_0 \tilde\varphi_{n+j}=e_{n+j}\tilde\varphi_{n+j}$, $j=0,1$, so $\{\tilde\varphi_{n},\tilde\varphi_{n+1}\}$ is an orthonormal basis for the eigenspace of $T_0^w$ associated to the double eigenvalue $e_n$. If $\Pi_n$ is given by \eqref{eq:projection} then $\Pi_n=\Pi_{n+1}$ and as above we have that $\Pi_n\tilde\varphi_n$ and $\Pi_{n}\tilde\varphi_{n+1}$ are linearly independent for $h$ small.

We consider two cases:

1) The asymptotic series for $\lambda_n(h)$ and $\lambda_{n+1}(h)$ provided by Proposition \ref{thm:doubleEV} are not identical. We can then define spectral projections $P_n$ and $P_{n+1}$ of rank 1 onto the span of $v(n,h)$ and $v_{}(n+1,h)$, respectively. Since $\Pi_n$ is the projection onto the span of $\{v(n,h),v_{}(n+1,h)\}$ and $\Pi_n\tilde\varphi_n$ and $\Pi_{n}\tilde\varphi_{n+1}$ are linearly independent for $h$ small, we must have $P_n\tilde\varphi_i\ne0$ and $P_{n+1}\tilde\varphi_j\ne0$ for some $i,j\in\{n,n+1\}$. Indeed, if for example $P_n\tilde\varphi_n=P_n\tilde\varphi_{n+1}=0$ then $\Pi_n\tilde\varphi_n=P_{n+1}\tilde\varphi_n$ and $\Pi_n\tilde\varphi_{n+1}=P_{n+1}\tilde\varphi_{n+1}$ are linearly dependent, a contradiction. For the same reason we cannot have $P_{n+1}\tilde\varphi_n=P_{n+1}\tilde\varphi_{n+1}=0$. Hence, for $j=n,n+1$ we have $v_j\in\Span\{P_j\varphi_j\}$ for some eigenvector $\varphi_j\in\{\tilde\varphi_n,\tilde\varphi_{n+1}\}$ of $T_0^w$. We now obtain an asymptotic expansion of $P_j\varphi_j$ by arguments similar to those in the proof of Proposition \ref{thm:simple}, which gives the desired expansion of  $v_j$.

2) The asymptotic series for $\lambda_n(h)$ and $\lambda_{n+1}(h)$ are identical. In this case any vector in $\ran\Pi_n$ is an approximate eigenvector of both $\lambda_n(h)$ and $\lambda_{n+1}(h)$ to any order.
Since $\Pi_n\tilde\varphi_n$ and $\Pi_{n}\tilde\varphi_{n+1}$ is a basis for $\ran\Pi_n$ for $h$ small, and both $\Pi_n\tilde\varphi_n$ and $\Pi_{n}\tilde\varphi_{n+1}$ have asymptotic expansions by the proof of Proposition \ref{thm:simple} we obtain the expansion in this case as well.

Finally we note by the above that the eigenvectors are spectral projections of $\tilde\varphi_j$, so as in the proof of Proposition \ref{thm:simple} we find that $v_j\in\mathscr S$ and each term in the expansion is a polynomial times $e^{-\omega y^2/2}$. 
\end{proof}

We can now give

\begin{proof}[Proof of Theorem \ref{thm:expansionsintro}]
As before we assume that $\mu_j=(-1)^j\omega$ for notational simplicity.
For $n\in\NN_0$ and $j\in\{1,2\}$ let $e_{2n+j}=e_{n}^j$ be the eigenvalues of $T_0^w$ arranged as in \eqref{eq:eigenvalueordering}, with $e_1$ simple and $e_n$ double for $n\ge2$. Let $\lambda_1(h)$ be the eigenvalue of $T_\mathrm{mass}^w$ tending, after division by $h$, to $e_1$ as $h\to0$, and let $v_1\in\mathscr S$ be the corresponding eigenvector. Also let $\lambda_{2n},\lambda_{2n+1}$ be the eigenvalues of $T_\mathrm{mass}^w$ tending, after division by $h$, to $e_{2n}=e_{2n+1}$ as $h\to0$, and let $v_{2n},v_{2n+1}\in\mathscr S$ be the corresponding eigenvectors. Now, using integration by parts as in the proof of \eqref{eq:negligible} we find that $T_\mathrm{mass}^w v_{2n+j}=T^w v_{2n+j}+\mathcal{O}_{\mathscr S}(h^\infty)$ since $v_{2n+j}\in\mathscr S$. Hence,
$$
(T^w-\lambda_{2n+j})v_{2n+j}=\mathcal O_{\mathscr S}(h^\infty),
$$
and since $T^w=hT_0^w+h^{3/2}R_0^w$ contains a factor $h$ we then find
by Propositions \ref{thm:simple}--\ref{thm:eigenvectorsofdoubleEV} that
$$
\lambda_{2n+j}v^{(j)}(n)=T^w \sum_{i=0}^{2\ell}h^{i/2}v_i^{(j)}(n)+\mathcal{O}_{\mathscr S}(h^{\ell+3/2})
$$
where $v_i^{(j)}(n)$ is a polynomials times $e^{-\omega y^2/2}$,
and
$$
\lambda_{2n+j}v^{(j)}(n)=\sum_{i=0}^{2\ell} h^{(i+2)/2}\lambda_i^{(j)}(n) \sum_{i'=0}^{2\ell}h^{i'/2}v_{i'}^{(j)}(n)+\mathcal{O}_{\mathscr S}(h^{\ell+3/2})
$$
with $\lambda_0^{(j)}(n)$ and $v_0^{(j)}(n)$ having leading asymptotics as in the statement. Hence,
$$
T^w\sum_{i=0}^{2\ell}h^{i/2}v_i^{(j)}=\lambda^{(j)}(n)\sum_{i=0}^{2\ell}h^{i/2}v_i^{(j)}(n)+\mathcal{O}_{\mathscr S}(h^{\ell+3/2})
$$
where $\lambda^{(j)}(n)=\sum_{i=0}^{2\ell} h^{(i+2)/2}\lambda_i^{(j)}(n)$ mod $\mathcal O(h^{\ell+3/2})$, and the result follows.
\end{proof}

\section{Wells in chiral strained moir\'e lattices}
\label{sec:chiral}

Here we shall apply Corollary \ref{cor:periodicquasimodesnormalform} to the low-energy model $H_\ch^w(k_x)$ in \eqref{eq:tm20} and the pseudodifferential operator $H_{\operatorname{\Psi DO}}$ in \eqref{eq:HarperPDO}. To do so we must first show that each model can be written in the appropriate normal form, which we will do by verifying the assumptions in Proposition \ref{prop:normalform}.

\subsection{Wells for the chiral low-energy Hamiltonian}\label{ss:lowenergy}
Let us start with $H_\ch^w(k_x)$. In the chiral limit $w_0=0$ we get
\begin{equation}\label{eq:semiclassicalH}
H_\ch^w(k_x) = \begin{pmatrix} 0 & hD+k_x -i k_\perp  & 0 & w_1  U^-(x) \\ hD+k_x +i k_\perp  & 0 & w_1 U^+(x) & 0  \\ 0 & w_1 U^+(x) & 0 & hD+k_x -i k_\perp    \\  w_1 U^-(x) & 0 & hD+k_x +i k_\perp  & 0 \end{pmatrix}
\end{equation}
with $H(k_x)$ understood to be a densely defined operator on  $L^2(\mathbb T)$.

\begin{lemm}\label{lem:reduction0}
Consider the chiral limit $w=(0,w_1)$. Then $H_\ch^w(k_x)$ in \eqref{eq:semiclassicalH} is unitarily equivalent to the system 
\begin{equation}\label{eq:reduction}
 \mathscr L_\ch =\begin{pmatrix} 0& D_{\operatorname{c}} \\D_{\operatorname{c}}^* & 0 \end{pmatrix},\quad D_{\operatorname{c}}=\frac12\begin{pmatrix} i & 1\\ -i &  1 \end{pmatrix} \begin{pmatrix} hD+\mathbf k &w_1U^+(x) \\ w_1U^-(x)&hD+\mathbf k \end{pmatrix}\begin{pmatrix} -i & i\\ 1 &  1 \end{pmatrix}
\end{equation}
where $\mathbf k=k_x+ik_\perp$.
Then $\mathscr L_\ch ^2=\diag(D_\ch D_\ch ^*,D_\ch ^*D_\ch )$ and if $D_\ch D_\ch ^*u = \lambda u$ then $\mathscr L_\ch  v=\pm\sqrt\lambda v$ for $v=(v_1,v_2)^t$ with $v_1 =u$ and $v_2 = \pm \lambda^{-1/2} D_{\operatorname{c}}^* u$.
\end{lemm}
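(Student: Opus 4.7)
The plan is to construct an explicit $4\times 4$ unitary $W$ (acting fiberwise on $L^2(\Sone;\CC^4)$) that conjugates $H_\ch^w(k_x)$ into $\mathscr L_\ch$, and then to verify the two algebraic claims about $\mathscr L_\ch^2$ and its eigenvectors by direct block manipulation. I would split the construction of $W$ into two steps.

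\emph{Step 1 (block off-diagonalization via a permutation).} Inspecting $H_\ch^w(k_x)$ one sees that the nonzero entries pair rows/columns $\{1,3\}$ with rows/columns $\{2,4\}$. Let $P$ be the permutation matrix on $\CC^4$ swapping coordinates $2$ and $3$; then a direct index check gives
\[ PH_\ch^w(k_x)P^* \;=\; \begin{pmatrix} 0 & A \\ A^* & 0 \end{pmatrix}, \qquad A \;=\; \begin{pmatrix} hD+k_x-ik_\perp & w_1 U^-(x) \\ w_1 U^+(x) & hD+k_x-ik_\perp \end{pmatrix}. \]
Note that $A^*$ is exactly the middle factor in the stated definition of $D_\ch$, once we recognize $hD+\mathbf k = hD+k_x+ik_\perp = a^*$ with $a = hD+k_x-ik_\perp$.

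\emph{Step 2 (scalar conjugation and block swap).} Set $V = \frac{1}{\sqrt2}\begin{pmatrix} -i & i \\ 1 & 1\end{pmatrix}$, which is unitary since $V^*V=I_2$, and let $Q=\begin{pmatrix} 0 & I_2 \\ I_2 & 0\end{pmatrix}$. Conjugating the output of Step~1 first by $Q$ and then by $\operatorname{diag}(V,V)$ yields
\[ \operatorname{diag}(V^*,V^*)\, Q\, \begin{pmatrix} 0 & A \\ A^* & 0 \end{pmatrix}\, Q^*\, \operatorname{diag}(V,V) \;=\; \begin{pmatrix} 0 & V^*A^*V \\ V^*AV & 0 \end{pmatrix}. \]
Reading off the definition of $D_\ch$ shows that $V^*A^*V = D_\ch$, so this matrix equals $\mathscr L_\ch$. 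The composite $W = \operatorname{diag}(V^*,V^*)\,Q\,P$ is the unitary we wanted.

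\emph{Verification of the algebraic consequences.} The identity $\mathscr L_\ch^2 = \operatorname{diag}(D_\ch D_\ch^*, D_\ch^* D_\ch)$ follows from one line of block multiplication. For the eigenvector claim, assume $D_\ch D_\ch^* u = \lambda u$ with $\lambda>0$ and set $v = (u,\, \pm\lambda^{-1/2}D_\ch^* u)^t$. Then
\[ \mathscr L_\ch v \;=\; \begin{pmatrix} \pm\lambda^{-1/2} D_\ch D_\ch^* u \\ D_\ch^* u \end{pmatrix} \;=\; \begin{pmatrix} \pm\sqrt\lambda\, u \\ D_\ch^* u \end{pmatrix} \;=\; \pm\sqrt\lambda\, v. \]
There is no real analytic obstacle here; the whole argument is pointwise on the symbol. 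The one point requiring care is bookkeeping: one must keep track of the coordinate swap $\sigma=(2\,3)$, the conjugation by $V$ (and the fact that $V$ has the same action on each copy), and the identification $hD+\mathbf k = a^*$, so that the final block $V^*A^*V$ lines up exactly with the formula for $D_\ch$ in the statement.
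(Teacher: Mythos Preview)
Your proof is correct and essentially identical to the paper's. Your permutation $P$ swapping coordinates $2$ and $3$ is precisely the paper's conjugation by $\operatorname{diag}(1,\sigma_1,1)$, and your composite $\operatorname{diag}(V^*,V^*)\,Q$ coincides with the paper's block unitary $\mathscr U=\begin{pmatrix}0&U\\U&0\end{pmatrix}$ (since your $V=U^*$); the eigenvector verification is also the same.
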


\begin{proof}
Let 
\begin{equation*}
\mathscr U=\begin{pmatrix} 0 & U\\ U & 0 \end{pmatrix},\quad
U=\frac{1}{\sqrt 2}\begin{pmatrix} i & 1\\ -i &  1 \end{pmatrix}.
\end{equation*}
Then $UU^\ast=U^\ast U=\id_2$ and $\mathscr U\mathscr U^\ast=\mathscr U^\ast \mathscr U=\id_4$. By first conjugating by $\diag(1,\sigma_1,1)$ and then conjugating by $\mathscr U$ we see that $H$ is equivalent to
$\mathscr L_\ch $ in \eqref{eq:reduction}. If $D_\ch D_\ch ^*u = \lambda u$ then with $v=(v_1,v_2)^t=(u, \pm \lambda^{-1/2} D_{\operatorname{c}}^* u)^t$ we get
\begin{equation*}
 \mathscr L_\ch v=\begin{pmatrix} 0& D_{\operatorname{c}} \\D_{\operatorname{c}}^* & 0 \end{pmatrix}\begin{pmatrix} u \\ \pm\lambda^{-1/2} D_{\operatorname{c}}^* u \end{pmatrix}=\begin{pmatrix} \pm \lambda^{1/2} u \\D_\ch ^* u\end{pmatrix}=\pm\sqrt\lambda v,
\end{equation*}
as claimed.
\end{proof}

If $\lambda$ is an eigenvalue of $\mathscr L_\ch $ then clearly $\lambda^2$ is an eigenvalue of $(\mathscr L_\ch ^2)_{11}$. In view of the converse correspondence between eigenvalues of $(\mathscr L_\ch ^2)_{11}$ and $\mathscr L_\ch $ given by Lemma \ref{lem:reduction0} we can therefore study the spectrum of $(\mathscr L_\ch ^2)_{11}$ in place of $\mathscr L_\ch $.  We then use the following description of the Weyl symbol of $\mathscr L_\ch ^2$.

\begin{lemm}\label{lem:reduction1}
Let $\mathscr L_\ch $ be given by \eqref{eq:reduction}. Then the square $\mathscr L_\ch ^2$ is the Weyl quantization of the symbol $\sigma(\mathscr L_\ch ^2)=\sigma_0(\mathscr L_\ch ^2)+h\sigma_1(\mathscr L_\ch ^2)+h^2\sigma_2(\mathscr L_\ch ^2)$ where the principal symbol $\sigma_0(\mathscr L_\ch ^2)$ has block-diagonal form
$$
\sigma_0(\mathscr L_\ch ^2)=\begin{pmatrix} P_{11}&0\\0&P_{22}\end{pmatrix}+k_\perp ^2\id_{\CC^{4\times4}}+2k_\perp g\diag(1,-1,1,-1)
$$
with
\begin{equation*}
 P_{11}(x,\xi)=  \begin{pmatrix} \xi^2 + f^2+g^2&  
2i\xi f - 2fg  \\ -2i\xi f-2fg &  \xi^2 + f^2+g^2 \end{pmatrix},\quad P_{22}(x,\xi)=  \begin{pmatrix} \xi^2 + f^2+g^2&  
2i\xi f +2fg  \\ -2i\xi f +2fg  &  \xi^2 + f^2+g^2 \end{pmatrix}
\end{equation*}
where $f(x)=w_1(1-\cos(2\pi x))$ and $g(x)=w_1\sqrt 3\sin(2\pi x)$, 
and with lower order terms
\[ \sigma_1(\mathscr L_\ch ^2)(x,\xi) = 2\xi (k_x/h)\id_4-g'(x)
\operatorname{diag}(\sigma_3,-\sigma_3)-2f(x)(k_x/h) \diag(\sigma_2,\sigma_2)
\]
and $\sigma_2(\mathscr L_\ch ^2)=(k_x/h)^2\id_4$.
\end{lemm}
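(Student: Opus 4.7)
The plan is to apply the semiclassical Weyl calculus directly to the explicit form of $\mathscr L_\ch$. Let $U$ be the constant unitary matrix from the proof of Lemma~\ref{lem:reduction0} and write $w_1 U^\pm=f\pm g$, so that $D_\ch=UVU^\ast$ with
\[
V(x,\xi)=\begin{pmatrix}\xi+\mathbf k & f(x)+g(x)\\ f(x)-g(x) & \xi+\mathbf k\end{pmatrix}.
\]
Since $U$ is constant it commutes with Weyl quantization, and therefore $\mathscr L_\ch^2=\diag\bigl(U(V\sharp V^\ast)U^\ast,\;U(V^\ast\sharp V)U^\ast\bigr)$. The task thus reduces to computing the two Moyal products $V\sharp V^\ast$ and $V^\ast\sharp V$ and then conjugating each block by $U$.

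I would then separate the $k_x$-dependence by writing $V=V_0+k_x\id_2$ with $V_0=V|_{k_x=0}$. Because $k_x=\mathcal{O}(h)$, the constant piece $k_x\id_2$ contributes only at subprincipal and subsubprincipal orders upon multiplication, producing $2k_x\xi\id_2 + 2k_xf\,\sigma_1+k_x^2\id_2$ (via $V_0+V_0^\ast=2\xi\id_2+2f\sigma_1$). Since $V_0$ is linear in $\xi$, the Moyal expansion of $V_0\sharp V_0^\ast$ terminates at first order in $h$: a short computation yields
\[
\tfrac{h}{2i}\bigl(\partial_\xi V_0\,\partial_x V_0^\ast-\partial_x V_0\,\partial_\xi V_0^\ast\bigr)=\tfrac{h}{2i}(\partial_xV_0^\ast-\partial_xV_0)=-hg'\sigma_2,
\]
with the opposite sign in $V_0^\ast\sharp V_0$.

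A direct computation of the pointwise matrix product, expanded in the Pauli basis, gives
\[
V_0V_0^\ast = (\xi^2+k_\perp^2+f^2+g^2)\id_2 + 2fg\,\sigma_3 + 2f\xi\,\sigma_1 + 2gk_\perp\,\sigma_2,
\]
with $V_0^\ast V_0$ differing only by the sign of the $\sigma_3$ coefficient. Conjugation by $U$ is then governed by the intertwining relations $U\sigma_1 U^\ast = -\sigma_2$, $U\sigma_2 U^\ast = \sigma_3$, $U\sigma_3 U^\ast = -\sigma_1$, each verified by a one-line matrix calculation from the explicit form of $U$. Applying them, $UV_0V_0^\ast U^\ast$ becomes $P_{11}+k_\perp^2\id_2+2gk_\perp\diag(1,-1)$, and $UV_0^\ast V_0 U^\ast$ becomes the analogous expression with $P_{22}$ in place of $P_{11}$; the Moyal correction $\mp hg'\sigma_2$ turns into $\mp hg'\sigma_3$, producing the $\diag(\sigma_3,-\sigma_3)$ structure in $\sigma_1(\mathscr L_\ch^2)$, while the linear-in-$k_x$ pieces become $2\xi(k_x/h)h\,\id_4-2f(k_x/h)h\,\diag(\sigma_2,\sigma_2)$ after the intertwining $U\sigma_1U^\ast=-\sigma_2$.

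Collecting by powers of $h$ (with $k_x/h=\mathcal{O}(1)$), one reads off $\sigma_0$, $\sigma_1$ and $\sigma_2$ exactly as stated. The only delicate point is bookkeeping: distinguishing the principal contribution (the matrix product together with the $k_\perp$ pieces), the subprincipal contribution (the $hg'$ Moyal correction and the linear-in-$k_x$ terms), and the order-$h^2$ piece (the $k_x^2$ term), and applying the Pauli intertwiners consistently to each of the two diagonal blocks of $\mathscr L_\ch^2$.
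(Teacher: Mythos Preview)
Your proof is correct and follows essentially the same approach as the paper's: a direct Weyl-calculus computation of $D_\ch D_\ch^*$ and $D_\ch^* D_\ch$. The only organizational difference is that the paper first carries out the conjugation by $U$ to get $D_\ch=\begin{pmatrix} hD+\mathbf k+ig & if\\ -if & hD+\mathbf k-ig\end{pmatrix}$ and then multiplies out the operator product (isolating the anticommutator $Q^w$ and the commutator $[ig,hD]\sigma_3$), whereas you compute the Moyal product of the unconjugated $V$ in the Pauli basis and conjugate at the end via the intertwiners $U\sigma_jU^\ast$; your bookkeeping is perhaps slightly tidier but the content is the same.
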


\begin{proof}
Writing $\mathbf{k}=k_x+ik_\perp$ we find by \eqref{eq:reduction} that $\mathscr L_\ch ^2=\diag(D_\ch D_\ch ^*,D_\ch ^*D_\ch )$ where
\begin{align*}
D_{\operatorname{c}}D_{\operatorname{c}}^* &=\begin{pmatrix} hD +\mathbf{k} +ig&if \\ -if&hD+\mathbf{k} -ig \end{pmatrix}\begin{pmatrix} hD+\bar{\mathbf{k}}  -ig&if \\ -if&hD+\bar{\mathbf{k}} +ig \end{pmatrix}
\\&=\begin{pmatrix} (hD+k_x)^2 +f^2+(g+k_\perp )^2& Q^w-2fg \\ -Q^w-2fg&(hD+k_x)^2 +f^2+(g-k_\perp )^2 \end{pmatrix}+[ig,hD]\sigma_3
\end{align*}
and
\begin{equation}\label{eq:Q}
Q^w(x,hD)u=i((hD+k_x) (f u)+f (hD+k_x)u).
\end{equation}
By using the Weyl calculus and noting that $k_x=\mathcal O(h)$ by \eqref{eq:BFunion} it is now straightforward to check that $\mathscr L_\ch ^2$ is an operator having Weyl symbol as described in the statement. 
\end{proof}

We shall now study the existence of degenerate wells for $\mathscr L_\ch ^2$ in the sense of Definition \ref{def:degeneratewell}.
It is easy to see that the eigenvalues of $\sigma_0(\mathscr L_\ch ^2)_{jj}(x,\xi)$ for $j=1,2$ are given by 
\begin{equation*}
\lambda_\pm(x,\xi)=(\xi^2+f^2+g^2+k_\perp ^2)\pm2\sqrt{\xi^2f^2+g^2(f^2+k_\perp ^2)},
\end{equation*}
where $f(x)=w_1(1-\cos(2\pi x))$ and $g(x)=w_1\sqrt 3\sin(2\pi x)$. The eigenvalues coalesce (i.e., $\lambda_+=\lambda_-$) at $(\frac12,0)$ and $(0,\xi)$ for all $\xi\in\RR$.
If $k_\perp \ne0$ then $\lambda_+$ never vanishes. 
If $k_\perp =0$ then $\lambda_+$ vanishes only at $(0,0)\in T^*\mathbb T$, and the zero set of $\lambda_-$ is
\begin{equation}\label{eq:componentszeroset}
\{(x,\xi):\xi=\pm\sqrt{f(x)^2-g(x)^2},\ f^2-g^2\ge0\}
\end{equation}
which has two connected components: the origin and a closed curve in $T^*\mathbb T$ connecting $(\frac13,0)$ with $(-\frac13,0)$ which scales with $w_1$, see Figure \ref{fig:detP}.
Since 
\begin{equation}\label{eq:isolatedzero}
f(x)^2-g(x)^2=-12w_1^2\pi^2x^2+\mathcal{O}(x^4)
\end{equation}
the point $(0,0)$ is an isolated zero of 
$\lambda_-$.
In particular, if $k_\perp \ne0$ then the eigenvalues are distinct near all points in the characteristic set of $\lambda_-$, and if $k_\perp =0$ then the eigenvalues are distinct near all points in the characteristic set of $\lambda_-$ except at the isolated zero $(0,0)$.

\begin{figure}
\includegraphics[width=0.45\textwidth]{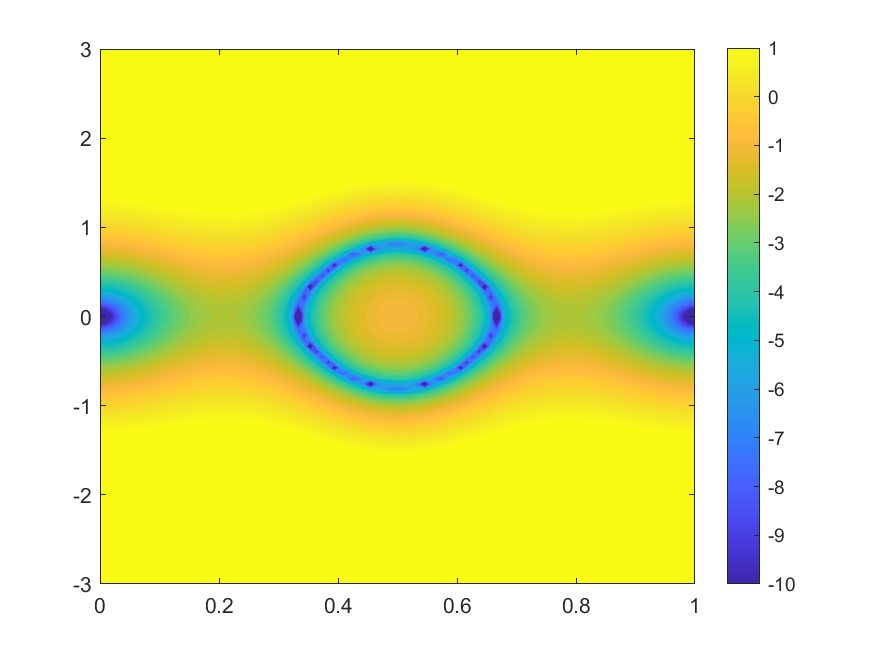}
\includegraphics[width=0.45\textwidth]{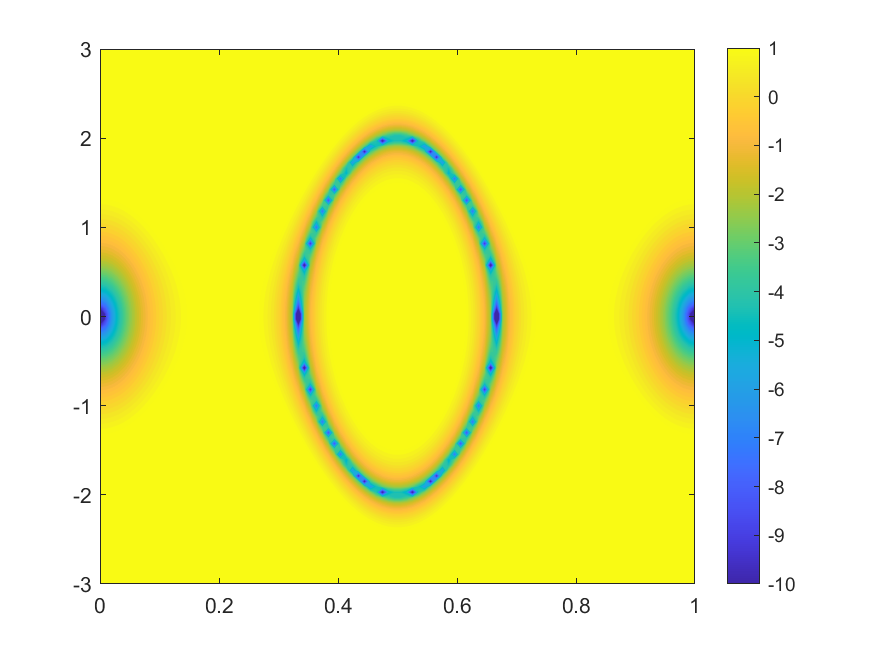}
\caption{Contour plot of the logarithm of the determinant of $\sigma_0(\mathscr L_\ch ^2)_{11}(x,\xi)$ over one period in the $x$ direction, showing the zero set consisting of the origin together with a closed curve in $T^*\mathbb T$. Here, $k_\perp=0$ and $w_1=2/5$ (left) and $w_1=1$ (right). \label{fig:detP}}
\end{figure}

Hence, for quasimomenta $k_\perp\ne0$ there are no degenerate wells, so we shall now restrict our attention to $k_\perp =0$ in which case there is a degenerate well at $(0,0)$:

\begin{prop}\label{prop:degwellH}
Let $\mathscr L_\ch ^2$ be as in Lemma \ref{lem:reduction1} with $k_\perp=0$. Then
$\sigma(\mathscr L_\ch ^2)_{11}(x,\xi)=\sum_{j=0}^2 h^jP_j(x,\xi)$ with $P_j\in S^{2-j}(T^*\mathbb T)$.
Moreover, $P_0$ has a degenerate well at $(0,0)$ with
$$
P_0(x,\xi)=(\xi^2+12\pi^2w_1^2x^2)\id_2+\mathcal O(\lvert (x,\xi)\rvert^3)
$$
and $P_1(0,0)=-2\pi\sqrt 3 w_1\diag(1,-1)$.
\end{prop}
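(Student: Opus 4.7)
The plan is to read off the coefficients $P_0$, $P_1$, $P_2$ directly from Lemma \ref{lem:reduction1}, specialized to $k_\perp = 0$, by extracting the upper-left $2\times 2$ block of each $\sigma_j(\mathscr L_\ch^2)$. The symbol-class statement $P_j \in S^{2-j}(T^*\Sone)$ then follows by inspection since $f, g \in C^\infty(\Sone)$ are 1-periodic (hence bounded together with all their derivatives) and $k_x/h = \mathcal O(1)$: $P_0$ has polynomial dependence $\xi^2$ on the diagonal and linear dependence on $\xi$ off the diagonal, $P_1$ is at most linear in $\xi$, and $P_2$ is independent of $\xi$.

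To obtain the Taylor expansion of $P_0$ near $(0,0)$, I would Taylor expand $f(x) = 2\pi^2 w_1 x^2 + \mathcal O(x^4)$ and $g(x) = 2\pi\sqrt 3\, w_1 x + \mathcal O(x^3)$. This gives $f^2 + g^2 = 12\pi^2 w_1^2 x^2 + \mathcal O(x^4)$ on the diagonal of $P_0$, while the off-diagonal entries $\pm 2i\xi f - 2fg$ are $\mathcal O(|(x,\xi)|^3)$ because $\xi f = \mathcal O(|\xi|\,x^2)$ and $fg = \mathcal O(|x|^3)$. Hence $P_0(x,\xi) = (\xi^2 + 12\pi^2 w_1^2 x^2)\id_2 + \mathcal O(|(x,\xi)|^3)$, matching the claim. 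Global positive semi-definiteness of $P_0$ (required by Definition \ref{def:degeneratewell}) follows from $\tr P_0 = 2(\xi^2 + f^2 + g^2) \ge 0$ together with the factorization $\det P_0 = (\xi^2 + f^2 + g^2)^2 - 4f^2(\xi^2 + g^2) = (\xi^2 + g^2 - f^2)^2 \ge 0$.

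The value of $P_1$ at the well is immediate from Lemma \ref{lem:reduction1}: at $(x,\xi) = (0,0)$ both $2\xi(k_x/h)\id_2$ and $-2f(x)(k_x/h)\sigma_2$ vanish (since $\xi = 0$ and $f(0) = 0$), leaving $P_1(0,0) = -g'(0)\sigma_3 = -2\pi\sqrt 3\, w_1 \diag(1,-1)$. Degeneracy of the well is then verified using the eigenvalue formula $\lambda_\pm = (\xi^2 + f^2 + g^2) \pm 2|f|\sqrt{\xi^2 + g^2}$ recorded in the discussion preceding the proposition: both eigenvalues equal $\xi^2$ along $x = 0$, but split at nearby points with $x \ne 0$ small and $\xi \ne 0$, so $P_0$ does not have a single eigenvalue of constant multiplicity $2$ in any neighborhood of $(0,0)$. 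No step is genuinely difficult; the only mildly nontrivial point is ensuring that positive semi-definiteness is global rather than just local near the well, which is handled by the perfect-square identity for $\det P_0$ above.
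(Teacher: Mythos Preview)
Your proof is correct and follows essentially the same approach as the paper: read off $P_0,P_1,P_2$ from Lemma~\ref{lem:reduction1}, Taylor expand $f,g$ to get the quadratic form of $P_0$, and evaluate $P_1(0,0)=-g'(0)\sigma_3$. Your argument is in fact slightly more explicit than the paper's, which simply asserts positive semi-definiteness as ``straightforward'' (you supply the identity $\det P_0=(\xi^2+g^2-f^2)^2$) and defers degeneracy to the eigenvalue discussion preceding the proposition (you state it directly).
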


\begin{proof}
That the symbol of $(\mathscr L_\ch ^2)_{11}$ has the stated expansion follows from Lemma \ref{lem:reduction1}. It is also clear that at $(0,0)$ the subprincipal symbol has the stated diagonal from, so we only need to show that the principal symbol $P_{11}$ has a degenerate well at $(0,0)$, where $P_{11}$ is as in Lemma \ref{lem:reduction1}. It is straightforward to check that $P_{11}(x,\xi)$ is positive semi-definite for all $(x,\xi)\in T^*\mathbb T$, and by Taylor's formula we have, with $f(x)=w_1(1-\cos(2\pi x))$ and $g(x)=w_1\sqrt 3\sin(2\pi x)$, that
\begin{equation*}
\begin{split}
f(x)^2 + g(x)^2  &= \sum_{j \ge 1} \alpha_{j} x^{2j} \text{ with }\alpha_1 = (2\pi \sqrt 3w_1)^2,  \\
-2f(x)g(x)  &= \sum_{j \ge 1} \beta_{j} x^{2j+1}\text{ with }\beta_1 = -8 \sqrt{3}\pi^3w_1^2
\end{split}
\end{equation*}
and $2i\xi f(x)=\sum_{j\ge1}\frac{2iw_1 (2\pi)^{2j} (-1)^{j+1}}{(2j)!}\xi x^{2j}$. This gives the result.
\end{proof}

We also record that the chiral Hamiltonian satisfies \eqref{eq:lowerboundS2}, where we use the observation in Remark 2 on page \pageref{rmk2} to momentarily view $(\mathscr L_\ch^2)_{11}$ as an operator on $\RR$. 

\begin{lemm}\label{lem:loweroperatorbound}
If $k_\perp=0$ then
$$
((\mathscr L_\ch^2)_{11}u,u)\ge 
 ((g^2-2fg\sigma_1)u,u)-Ch(u,u)
$$
where $f(x)=w_1(1-\cos(2\pi x))$ and $g(x)=w_1\sqrt 3\sin(2\pi x)$, and 
$g^2\operatorname{id}_{\CC^{2\times 2}}-2fg\sigma_1=12w_1^2\pi^2x^2\operatorname{id}_{\CC^{2\times 2}}+\mathcal O(x^3)$.
\end{lemm}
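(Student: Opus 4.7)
Since $g^2 - 2f\lvert g\rvert$ is only continuous (it is not $C^1$ at the zeros of $g$), Gårding's inequality cannot be applied directly. Instead the plan is to prove the operator inequality by a direct quadratic-form estimate that exploits the factorization from Lemma~\ref{lem:reduction0}. Writing $D_\ch = VMV^*$ with the constant unitary $V = \tfrac{1}{\sqrt 2}\begin{pmatrix} i & 1 \\ -i & 1\end{pmatrix}$ and $M = \begin{pmatrix} A & f+g \\ f-g & A\end{pmatrix}$, where $A = hD + k_x$ is self-adjoint, and setting $v = V^*u$, the unitarity of $V$ gives the pointwise identity $\lvert v_1(x)\rvert^2 + \lvert v_2(x)\rvert^2 = \lvert u_1(x)\rvert^2 + \lvert u_2(x)\rvert^2$ and
\[
((\mathscr L_\ch^2)_{11}u,u) = \lVert M^* v\rVert^2 = \lVert Av_1 + (f-g)v_2\rVert^2 + \lVert (f+g)v_1 + Av_2\rVert^2.
\]

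Expanding the two squared norms yields the diagonal potential contributions $\int[(f+g)^2\lvert v_1\rvert^2 + (f-g)^2\lvert v_2\rvert^2]\,dx$, the kinetic contributions $\lVert Av_1\rVert^2 + \lVert Av_2\rVert^2$, and the cross terms $2\Re(Av_1,(f-g)v_2) + 2\Re((f+g)v_1, Av_2)$. Since $[A,f\pm g] = -ih(f\pm g)'$ is an $\mathcal O(h)$ multiplier, the cross terms collapse to $4\Re(fAv_1, v_2) + \mathcal O(h)\lVert u\rVert^2$. The key manoeuvre is to absorb this into the kinetic terms: the same commutator identity allows me to rewrite the cross term symmetrically as $2\Re(Av_1, fv_2) + 2\Re(fv_1, Av_2) + \mathcal O(h)\lVert u\rVert^2$, and applying the elementary bound $2\Re(a,b) \ge -\lVert a\rVert^2 - \lVert b\rVert^2$ to each summand gives
\[
\lVert Av_1\rVert^2 + \lVert Av_2\rVert^2 + 4\Re(fAv_1, v_2) \ge -\int f^2(\lvert v_1\rvert^2 + \lvert v_2\rvert^2)\,dx + \mathcal O(h)\lVert u\rVert^2.
\]

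Combining this with the diagonal potential contribution and using $(f\pm g)^2 - f^2 = g^2\pm 2fg$ reduces the operator inequality to the pointwise statement
\[
(g^2+2fg)\lvert v_1\rvert^2 + (g^2-2fg)\lvert v_2\rvert^2 \ge (g^2 - 2f\lvert g\rvert)(\lvert v_1\rvert^2 + \lvert v_2\rvert^2),
\]
i.e.\ to $2f(g + \lvert g\rvert)\lvert v_1\rvert^2 + 2f(\lvert g\rvert - g)\lvert v_2\rvert^2 \ge 0$, which holds because $f\ge 0$ and both $g + \lvert g\rvert$ and $\lvert g\rvert - g$ are nonnegative. Returning to $u$ via the pointwise identity $\lvert v\rvert^2 = \lvert u\rvert^2$ concludes the operator lower bound.

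Finally, the expansion $g^2 - 2f\lvert g\rvert = 12 w_1^2\pi^2 x^2 + \mathcal O(x^3)$ follows at once from the Taylor expansions $f(x) = 2\pi^2 w_1 x^2 + \mathcal O(x^4)$ and $g(x) = 2\pi\sqrt 3 w_1 x + \mathcal O(x^3)$, which give $g^2 = 12\pi^2 w_1^2 x^2 + \mathcal O(x^4)$ and $2f\lvert g\rvert = \mathcal O(\lvert x\rvert^3)$. The principal difficulty is to arrange the Cauchy--Schwarz splitting so that the two kinetic terms $\lVert Av_j\rVert^2$ are cancelled with unit (rather than $1\pm\varepsilon$) constants; once that is achieved the sign structure $g \pm \lvert g\rvert \ge 0$ takes care of the remaining pointwise inequality involving the non-smooth function $\lvert g\rvert$.
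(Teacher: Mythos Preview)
Your proof is correct and follows essentially the same route as the paper's. Both start from the factorisation $(\mathscr L_\ch^2)_{11}=D_\ch D_\ch^*$, absorb the kinetic--potential cross term via the elementary bound $2\Re(a,b)\ge -\lVert a\rVert^2-\lVert b\rVert^2$ (losing exactly $\lVert Av_j\rVert^2$ and $\int f^2\lvert v_j\rvert^2$), treat the commutators $[A,f\pm g]$ as $\mathcal O(h)$ multipliers, and finish with the pointwise sign observation $f\ge0$, $\lvert g\rvert\pm g\ge0$. The only cosmetic difference is that you first pass to the basis $v=V^*u$ so as to work with the simpler matrix $M$, whereas the paper keeps the original basis, expands $D_\ch D_\ch^*$ along the Pauli matrices as $((hD+k_x)^2+f^2+g^2)\mathrm{id}-2fg\,\sigma_1+Q^w i\sigma_2-[hD,ig]\sigma_3$, and estimates each piece separately; the $\sigma_1$-term yields the $-2f\lvert g\rvert$ contribution and the $\sigma_2$-term is handled by the same Cauchy--Schwarz step you use.
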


\begin{proof}
Let $u=(u_1,u_2)^t\in L^2(\mathbb R)$. By \eqref{eq:reduction} we have (for $k_\perp=0$) that $((\mathscr L_\ch^2)_{11}u,u)=(D_{\operatorname{c}}D_{\operatorname{c}}^*u,u)$ where
\begin{align*}
D_{\operatorname{c}}D_{\operatorname{c}}^* &=\begin{pmatrix} hD+k_x +ig&if \\ -if&hD+k_x-ig \end{pmatrix}\begin{pmatrix} hD+k_x -ig&if \\ -if&hD+k_x+ig \end{pmatrix}
\\&=((hD+k_x)^2 +f^2+g^2)\id_{\CC^{2\times 2}}-2fg\sigma_1+Q^wi\sigma_2-[hD,ig]\sigma_3
\end{align*}
with $Q^w$ given by \eqref{eq:Q}.
Here, $[hD,ig]=hg'=2h\pi\sqrt 3\cos(2\pi x)$ so  $(-[hD,ig]\sigma_3u,u)\ge -Ch(u,u)$. 
Noting that
\begin{align*}
(Q^wi\sigma_2u,u)&=(((hD+k_x)if+if(hD+k_x))i\sigma_2u,u)\\&=-2\Re(f(hD+k_x)\sigma_2u,u)\ge-((hD+k_x)^2 u+f^2u,u)
\end{align*}
we conclude that
$$
(D_{\operatorname{c}}D_{\operatorname{c}}^*u,u)\ge ((g^2-2f g\sigma_1)u,u)-Ch(u,u).
$$
We have $fg=\mathcal O(x^3)$ and $g^2=12w_1^2\pi^2x^2+\mathcal O(x^4)$ which gives the result.
\end{proof}

We can now show that Corollary \ref{cor:periodicquasimodesnormalform} implies existence of quasimodes for the chiral low-energy model.

\begin{proof}[Proof of Theorem \ref{cor:periodicquasimodes}]
By Proposition \ref{prop:degwellH} and Lemma \ref{lem:loweroperatorbound} we find that $\sigma(\mathscr L_\ch^2)$ satisfies the assumptions of Proposition \ref{prop:normalform} with $\xi_0=0$ and $\mu_j=(-1)^j\omega$, $j=1,2$, and $\omega=2\pi \sqrt 3w_1$. We then apply Corollary \ref{cor:periodicquasimodesnormalform} to $(\mathscr L_\ch^2)_{11}$ and obtain approximate eigenvalues $\lambda^j(n) $ and quasimodes $u^j(n)$ as stated, such that $((\mathscr L_\ch^2)_{11}-\lambda^j(n))u^j(n)=\mathcal{O}(h^{\ell+\frac32})$ in $L^2(\mathbb T)$ for any $\ell\in\NN_0$.

Fix $j\in\{1,2\}$ and $n\in\NN_0$ and omit them from the notation. In the notation of Lemma \ref{lem:reduction0} with $\psi=(\psi_1,\psi_2)^t=(u,\pm\lambda^{-\frac12}D_\ch^*u)^t$ we then have $D_\ch D_\ch^*=(\mathscr L_\ch^2)_{11}$ and the correspondence
\begin{equation*}
 \mathscr L_\ch \psi=\begin{pmatrix} 0& D_{\operatorname{c}} \\D_{\operatorname{c}}^* & 0 \end{pmatrix}\begin{pmatrix} u \\ \pm\lambda^{-1/2} D_{\operatorname{c}}^* u \end{pmatrix}=\begin{pmatrix} \pm \lambda^{-1/2} (\mathscr L_\ch^2)_{11}u\\\pm\sqrt \lambda \psi_2\end{pmatrix}=\pm\sqrt\lambda \psi+\mathcal{O}_{L^2(\mathbb T)}(h^{\ell+1}),
\end{equation*}
where the last identity follows from $(\mathscr L_\ch^2)_{11}u=\lambda u+\mathcal{O}_{L^2(\mathbb T)}(h^{\ell+\frac32})$ and $\lambda^{-1/2}=\mathcal{O}(h^{-1/2})$. Since $\lVert \psi\rVert_{L^2(\mathbb T;\CC^4)}\ge \lVert \psi_1\rVert_{L^2(\mathbb T;\CC^2)}= 1+\mathcal{O}(h^{1/2})$ we still have $\mathscr L_\ch \psi=\pm\sqrt\lambda\psi+\mathcal{O}(h^{\ell+1})$ after normalizing and renaming $\psi/\lVert \psi\rVert_{L^2(\mathbb T)}$ to $\psi$. Also, since $\WF_h(u)=\{(0,0)\}$ we have $\WF_h(\psi)=\{(0,0)\}$ by the microlocal property of pseudodifferential operators. By Lemma \ref{lem:reduction0},  $H_\ch^w(k_x)$ is unitarily equivalent to $\mathscr L_\ch$, which gives approximate eigenvalues and quasimodes of $H_\ch^w(k_x)$ as in the statement of the theorem.

To prove the last part, let $H_\ch^w(k_x)$ be given by \eqref{eq:semiclassicalH} with $k_\perp=0$. Since $k_x\in[0,2\pi h)$ we may write $k_x=h\xi_0$ with $\xi_0\in[0,2\pi)$.
If we make the symplectic change of variables $(x,\zeta)=(x,\xi+k_x)=(x,\xi+h\xi_0)$ then the proof of Proposition \ref{prop:normalform} shows that
$$
H_\ch^w(k_x)=e^{-ix\xi_0}q^w(x,hD)e^{ix\xi_0},
$$
where
$$
q(x,\zeta)=\sigma(H_\ch^w(k_x))(x,\xi)=\sigma(H_\ch^w(k_x=0))(x,\zeta).
$$
This shows that the approximate eigenvalues of $H_\ch^w(k_x)$ obtained in the first part of the proof are independent of $k_x$. Hence, the last statement of the theorem follows from \eqref{eq:BFunion}. Note also that multiplying by $e^{ix\xi_0}$ does not affect the wavefront set of the associated quasimodes, since $\WF_h(e^{ix\xi_0})=\RR^n\times\{0\}$, see \cite[Section 8.4]{zworski}. 
\end{proof}

\subsection{Wells for the chiral Harper model}\label{ss:Harper}

We now establish the existence of degenerate wells for the operator $H_{\operatorname{\Psi DO}}=a^w(x,hD)$ in \eqref{eq:HarperPDO}. To increase similarity with the presentation in the previous subsection we introduce the auxiliary operator $b^w(x,hD)$, where
\begin{equation}
\label{eq:b}
b(x,\xi)=2\mathbf t(k_{\perp}) \cos(2\pi\xi)+\mathbf t_0+V_w(x).
\end{equation}
Observe that $a(x,\xi)=b(\xi,-x)$. As operators on $\RR$ (cf.~Remark 2 on page \pageref{rmk2}) we then have $a^w(x,hD)=\mathcal F_h^{-1} b^w(x,hD)\mathcal F_h$ where $\mathcal F_h$ is the semiclassical Fourier transform, see \cite[Theorem 4.9]{zworski}. We shall then use the following result, analogous to Lemma \ref{lem:reduction0} and with identical proof. 

\begin{lemm}\label{lem:reduction0Harper}
In the chiral limit, $b^w(x,hD)$ is unitarily equivalent to a Hamiltonian on off-diagonal block form, $$\mathscr H_\ch =\begin{pmatrix} 0 & D_\ch  \\  D_\ch ^* & 0 \end{pmatrix},$$ where the symbol of $D_\ch $ is given by
$$
 D_{\operatorname{c}}(x,\xi) = \frac12\begin{pmatrix} i & 1\\ -i &  1 \end{pmatrix}\begin{pmatrix} 2\cos(2\pi \xi) e^{2\pi ik_\perp }+1 &  w_1 U^+(x) \\  w_1 U^-(x) &2\cos(2\pi \xi) e^{2\pi ik_\perp }+1 \end{pmatrix}\begin{pmatrix} -i & i\\ 1 &  1 \end{pmatrix}.
$$
Then $\mathscr H_\ch ^2=\diag(D_\ch D_\ch ^*,D_\ch ^*D_\ch )$ and if $D_\ch D_\ch ^*u = \lambda u$ then $\mathscr H_\ch  v=\pm\sqrt\lambda v$ for $v=(v_1,v_2)^t$ with $v_1 =u$ and $v_2 = \pm \lambda^{-1/2} D_{\operatorname{c}}^* u$.
\end{lemm}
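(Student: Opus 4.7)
\smallsection{Proof plan}
The plan is to follow essentially the same algebraic recipe as in the proof of Lemma \ref{lem:reduction0}. The first step is to write out the symbol $b(x,\xi)$ in the chiral limit $w_0=0$ explicitly as a $4\times 4$ matrix. Using $\gamma_{15}=\diag(\sigma_1,\sigma_1)$, $\gamma_{25}=\diag(\sigma_2,\sigma_2)$, one checks that
$\cos(2\pi k_\perp)\sigma_1+\sin(2\pi k_\perp)\sigma_2$ is the off-diagonal matrix with entries $e^{\mp 2\pi ik_\perp}$, so $2\mathbf t(k_\perp)\cos(2\pi\xi)+\mathbf t_0$ is a block-diagonal matrix whose two identical $2\times 2$ blocks are off-diagonal with entries $\phi(\xi)$ and $\overline{\phi(\xi)}$, where $\phi(\xi)=2\cos(2\pi\xi)e^{2\pi ik_\perp}+1$. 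Combining this with the block structure of $V_\ch$ (which is zero on the diagonal of $2\times2$ blocks and has the off-diagonal block $W$ containing $U_\ch^{\pm}$), one obtains
$$
b(x,\xi)=\begin{pmatrix} 0 & \phi(\xi) & 0 & w_1 U_\ch^-(x) \\ \overline{\phi(\xi)} & 0 & w_1 U_\ch^+(x) & 0 \\ 0 & w_1 U_\ch^+(x) & 0 & \phi(\xi) \\ w_1 U_\ch^-(x) & 0 & \overline{\phi(\xi)} & 0 \end{pmatrix}.
$$

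The key observation is that this has exactly the same block pattern as the symbol of $H_\ch^w(k_x)$ in \eqref{eq:semiclassicalH}, with the kinetic entries $hD+k_x\mp ik_\perp$ simply replaced by the symbol entries $\phi(\xi),\overline{\phi(\xi)}$. Consequently, the unitary transformations used in the proof of Lemma \ref{lem:reduction0}---first conjugation by $\diag(1,\sigma_1,1)$ to reshuffle the $2\times 2$ blocks, followed by conjugation by $\mathscr U=\bigl(\begin{smallmatrix} 0 & U\\ U & 0\end{smallmatrix}\bigr)$ with $U=\frac{1}{\sqrt 2}\bigl(\begin{smallmatrix} i & 1\\ -i & 1\end{smallmatrix}\bigr)$---apply verbatim on the symbol level. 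Since these conjugations are implemented by constant-coefficient unitaries they commute with the Weyl quantization, and one concludes that $b^w(x,hD)$ is unitarily equivalent to the operator $\mathscr H_\ch$ whose off-diagonal block $D_\ch$ has precisely the symbol stated in the lemma.

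For the second assertion, the block structure of $\mathscr H_\ch$ gives immediately $\mathscr H_\ch^2=\diag(D_\ch D_\ch^*,D_\ch^* D_\ch)$. Given $D_\ch D_\ch^* v=\lambda v$ with $\lambda>0$, set $u=(v,\pm\lambda^{-1/2}D_\ch^*v)^t$; then
$$
\mathscr H_\ch u=\begin{pmatrix} 0 & D_\ch \\ D_\ch^* & 0\end{pmatrix}\begin{pmatrix} v \\ \pm\lambda^{-1/2}D_\ch^* v\end{pmatrix}=\begin{pmatrix} \pm\lambda^{1/2}v \\ D_\ch^* v\end{pmatrix}=\pm\sqrt\lambda\, u,
$$
which is the claimed spectral correspondence. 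No genuine obstacle arises; the entire proof is structurally identical to that of Lemma \ref{lem:reduction0}, and the only thing to verify carefully is the explicit form of $b$ in the chiral limit, so that the block pattern matches and the same $\mathscr U$ produces the asserted $D_\ch$.
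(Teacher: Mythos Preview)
Your approach is correct and matches the paper's, which explicitly states that the proof is identical to that of Lemma \ref{lem:reduction0}: write out $b(x,\xi)$ in the chiral limit, observe the same $4\times4$ block pattern as in \eqref{eq:semiclassicalH}, and apply the same two unitary conjugations (by $\diag(1,\sigma_1,1)$ and then by $\mathscr U$), followed by the same eigenvector computation. One small slip: since $\cos(2\pi k_\perp)\sigma_1+\sin(2\pi k_\perp)\sigma_2=\bigl(\begin{smallmatrix}0 & e^{-2\pi ik_\perp}\\ e^{2\pi ik_\perp} & 0\end{smallmatrix}\bigr)$, the $(1,2)$ entry of your displayed $b(x,\xi)$ should be $\overline{\phi(\xi)}$ and the $(2,1)$ entry $\phi(\xi)$ (and similarly in the lower block), after which the conjugation indeed yields the stated $D_\ch$.
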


Let $\mathscr H_{\operatorname{c}}$ be as in Lemma \ref{lem:reduction0Harper}.
Using the lemma it is easy to see that 
\begin{align*}
 \operatorname{det}( D_{\operatorname{c}})(x,\xi) &=  (2\cos(2\pi \xi)e^{2\pi ik_\perp }+1)^2- w_1^2 U^+(x)U^-(x)\\
  & = (2\cos(2\pi \xi) \cos(2\pi k_\perp ) +1)^2  -4\cos^2(2\pi \xi) \sin^2(2\pi k_\perp ) -w_1^2 U^+(x)U^-(x)\\&\quad +4i\cos(2\pi \xi)\sin(2\pi k_\perp ).
\end{align*}
  Thus in case that $k_\perp  \notin\frac12\ZZ$, then for the imaginary part to vanish we require $\xi \in  \frac12\ZZ+\frac{1}4$. For then the real part to vanish as well, we require 
  $ 1 = w_1^2 U^+ U^-(x),$
  which, since the range of $U^+U^-$ is $[-2,4]$, admits a solution once $w_1 \ge 1/2.$ 
  
  Conversely, for $k_\perp \in\ZZ$ there is the special solution $x=0$, $\xi=\pm \frac{1}{3} + \mathbb Z$ which exists independent of $w_1$, together with the level set 
\begin{equation}\label{eq:levelsetHarper}
\{(x,\xi):(2\cos(2\pi \xi)  +1)^2=w_1^2 U^+(x)U^-(x)\}.
\end{equation}
For $k_\perp =\frac12+\ZZ$ there is the special solution $x=0$, $\xi=\pm \frac{1}{6} + \mathbb Z$ together with the level set 
\begin{equation}\label{eq:levelset2Harper}
\{(x,\xi):(2\cos(2\pi \xi)  -1)^2=w_1^2 U^+(x)U^-(x)\}.
\end{equation}
As we shall see, there are no degenerate wells unless $k_\perp \equiv 0$ or $k_\perp \equiv\frac12$ mod $\ZZ$ in which case there are degenerate wells precisely at these special solutions.
  
\begin{lemm}\label{lem:Harpersquareprincipalsymbol}
Let $\mathscr H_{\operatorname{c}}$ be as in Lemma \ref{lem:reduction0Harper}. Then $\mathscr H_\ch ^2=\sigma(\mathscr H_\ch ^2)^w(x,hD)$
where
$\sigma_0(\mathscr H_\ch ^2)=\diag( P_{11},P_{22})+2g\Im(\varUpsilon_{k_\perp })\diag(1,-1,1,-1)$
with
\begin{align*}
 P_{11}(x,\xi)&=  \begin{pmatrix} \lvert\varUpsilon_{k_\perp }\rvert^2 + f^2+g^2&  
2(i\Re\varUpsilon_{k_\perp } - g)f  \\ 2(-i\Re\varUpsilon_{k_\perp } - g)f  &  \lvert\varUpsilon_{k_\perp }\rvert^2 + f^2+g^2 \end{pmatrix},\quad P_{22}(x,\xi)=  \sigma_2P_{11}(x,\xi)\sigma_2,
\end{align*}
where $f(x)=w_1(1-\cos(2\pi x))$, $g(x)=w_1\sqrt 3\sin(2\pi x)$, and $\varUpsilon_{k_\perp }(\xi)=2\cos(2\pi\xi)e^{2\pi i k_\perp }+1$.
\end{lemm}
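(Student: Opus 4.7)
The plan is to reduce the problem to matrix--symbol multiplication via Lemma \ref{lem:reduction0Harper}, from which $\mathscr H_\ch^2 = \diag(D_\ch D_\ch^*, D_\ch^* D_\ch)$. Since the Weyl product of two symbols agrees with their pointwise matrix product at leading order, $\sigma_0(\mathscr H_\ch^2)$ is the block-diagonal matrix with blocks $\sigma_0(D_\ch)\sigma_0(D_\ch^*)$ and $\sigma_0(D_\ch^*)\sigma_0(D_\ch)$, so no semiclassical calculus beyond the principal term is required.

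First I would expand the sandwich product in the formula for $\sigma_0(D_\ch)$ given in Lemma \ref{lem:reduction0Harper}. Using the elementary identities $U^+(x)+U^-(x)=2(1-\cos(2\pi x))=2f(x)/w_1$ and $U^+(x)-U^-(x)=2\sqrt{3}\sin(2\pi x)=2g(x)/w_1$, the triple matrix product collapses to
$$
\sigma_0(D_\ch)(x,\xi)=\begin{pmatrix} \varUpsilon_{k_\perp}+ig & if \\ -if & \varUpsilon_{k_\perp}-ig \end{pmatrix},
$$
which is structurally parallel to the low-energy $D_\ch$ in the proof of Lemma \ref{lem:reduction1}, with the differential factor $hD+\mathbf{k}$ replaced by $\varUpsilon_{k_\perp}(\xi)$. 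Since $f$ and $g$ are real-valued, $\sigma_0(D_\ch^*)$ is then simply the conjugate transpose.

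Next I would multiply the two $2\times 2$ matrix symbols. The diagonal entries of both $D_\ch D_\ch^*$ and $D_\ch^* D_\ch$ yield $\lvert\varUpsilon_{k_\perp}\rvert^2+f^2+g^2$ together with cross terms of the form $\pm ig(\overline{\varUpsilon}_{k_\perp}-\varUpsilon_{k_\perp})=\pm 2g\Im\varUpsilon_{k_\perp}$, producing the common correction $2g\Im\varUpsilon_{k_\perp}\diag(1,-1,1,-1)$ in the statement. The off-diagonal entries evaluate to $2(\pm i\Re\varUpsilon_{k_\perp}-g)f$ in $D_\ch D_\ch^*$ and to $2(\pm i\Re\varUpsilon_{k_\perp}+g)f$ in $D_\ch^* D_\ch$; the single sign flip on the $g$-contribution is precisely the relation $P_{22}=\sigma_2 P_{11}\sigma_2$, since conjugation by $\sigma_2$ sends $\bigl(\begin{smallmatrix} a & b \\ c & d\end{smallmatrix}\bigr)$ to $\bigl(\begin{smallmatrix} d & -c \\ -b & a\end{smallmatrix}\bigr)$.

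No genuine obstacle arises; the step that most repays care is the sign bookkeeping when extracting the common $2g\Im\varUpsilon_{k_\perp}$ term and checking that it appears with matching signs across the $(1,1)$ and $(2,2)$ blocks, so that it factors out globally as $\diag(1,-1,1,-1)$ rather than producing block-dependent diagonal contributions.
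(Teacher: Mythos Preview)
Your proposal is correct and follows essentially the same route as the paper: both use $\mathscr H_\ch^2=\diag(D_\ch D_\ch^*,D_\ch^* D_\ch)$ from Lemma~\ref{lem:reduction0Harper} and then read off the principal symbol from the matrix product. The only cosmetic difference is that you first simplify the sandwich product to obtain $\sigma_0(D_\ch)=\bigl(\begin{smallmatrix}\varUpsilon_{k_\perp}+ig & if\\ -if & \varUpsilon_{k_\perp}-ig\end{smallmatrix}\bigr)$ and multiply symbols pointwise, whereas the paper keeps the operator-level expressions $\varUpsilon_{k_\perp}^w f+f\varUpsilon_{-k_\perp}^w$ etc.\ before extracting the principal part; this has the side benefit of setting up the subprincipal computation in Lemma~\ref{lem:reduction3}, but is not needed for the present statement.
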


\begin{proof}
We have $\mathscr H_\ch ^2=\diag(D_\ch D_\ch ^*,D_\ch ^*D_\ch )$ where
\begin{align*}
D_\ch D_\ch ^*&=\begin{pmatrix}\varUpsilon^w_{k_\perp }\varUpsilon^w_{-k_\perp }+f^2+g^2& i(\varUpsilon_{k_\perp }^wf+f\varUpsilon_{-k_\perp }^w)-2fg\\
-i(\varUpsilon_{k_\perp }^wf+f\varUpsilon_{-k_\perp }^w)-2fg & \varUpsilon^w_{-k_\perp }\varUpsilon^w_{k_\perp }+f^2+g^2\end{pmatrix}+\operatorname{diag}(Q_{-k_\perp },-Q_{-k_\perp }),\\
D_\ch ^*D_\ch &=\begin{pmatrix}\varUpsilon^w_{-k_\perp }\varUpsilon^w_{k_\perp }+f^2+g^2& i(\varUpsilon_{-k_\perp }^wf+f\varUpsilon_{k_\perp }^w)+2fg\\
-i(\varUpsilon_{-k_\perp }^wf+f\varUpsilon_{k_\perp }^w)+2fg & \varUpsilon^w_{-k_\perp }\varUpsilon^w_{k_\perp }+f^2+g^2\end{pmatrix}+\operatorname{diag}(-Q_{k_\perp },Q_{k_\perp }),
\end{align*}
where  $Q_{k_\perp }=i(g\varUpsilon_{k_\perp }^w-\varUpsilon_{-k_\perp }^wg)$, and $f$, $g$ and $\varUpsilon_{k_\perp }$ are as in the statement. Since $\varUpsilon_{-k_\perp }(\xi)$ is the complex conjugate of $\varUpsilon_{k_\perp }(\xi)$, the Weyl calculus now gives that $\sigma_0(D_\ch D_\ch ^*)=P_{11}+2g\Im(\varUpsilon_{k_\perp })\diag(1,-1)$. Similarly, $\sigma_0(D_\ch ^*D_\ch )=P_{22}+2g\Im(\varUpsilon_{k_\perp })\diag(1,-1)$.
\end{proof}

By Lemma \ref{lem:Harpersquareprincipalsymbol} we have
$$
\det(\sigma_0(\mathscr H_\ch ^2)_{jj}-\lambda)=(\lambda- \lvert\varUpsilon_{k_\perp }\rvert^2 + f^2+g^2)^2-4g^2(\Im(\varUpsilon_{y_k}))^2-4f^2(g^2+(\Re(\varUpsilon_{k_\perp }))^2)
$$
so the eigenvalues of $\sigma_0(\mathscr H_\ch ^2)_{jj}$ for $j=1,2$ are given by
$$
\lambda_\pm(x,\xi)=\lvert\varUpsilon_{k_\perp }\rvert^2 + f^2+g^2\pm 2\sqrt{f^2g^2+f^2(\Re(\varUpsilon_{k_\perp }))^2+g^2(\Im(\varUpsilon_{y_k}))^2}.
$$
If $k_\perp \notin\frac12\ZZ$ then 
$$
\varUpsilon_{k_\perp }(\xi) =2\cos(2\pi\xi)\cos(2\pi k_\perp )+1+2i\cos(2\pi\xi)\sin(2\pi k_\perp )
$$
has vanishing imaginary part only when $\xi=\frac14+\frac12\ZZ$ in which case $\varUpsilon_{k_\perp }(\xi)\equiv1$ for all $\xi$. Thus, $\lambda_+(x,\xi)$ never vanishes when $k_\perp \notin\frac12\ZZ$.
If $k_\perp \in\ZZ$ then $\lambda_+(x,\xi)=0$ precisely when $x=0$ and $\xi=\pm\frac13$ mod $\ZZ$. By the analysis preceding the lemma, the characteristic set of $\lambda_-$ is the level set \eqref{eq:levelsetHarper}.
Since $w_1^2U^+U^-=f^2-g^2$ we find in view of \eqref{eq:isolatedzero} that this level set has several connected components in $\RR^2/\ZZ^2$: the points $(0,\pm\frac13)$ and one or two closed curves depending on $w_1$, see Figure \ref{fig:zerosetHarper}.

\begin{figure}
\begin{centering}
\includegraphics[width=0.3\textwidth]{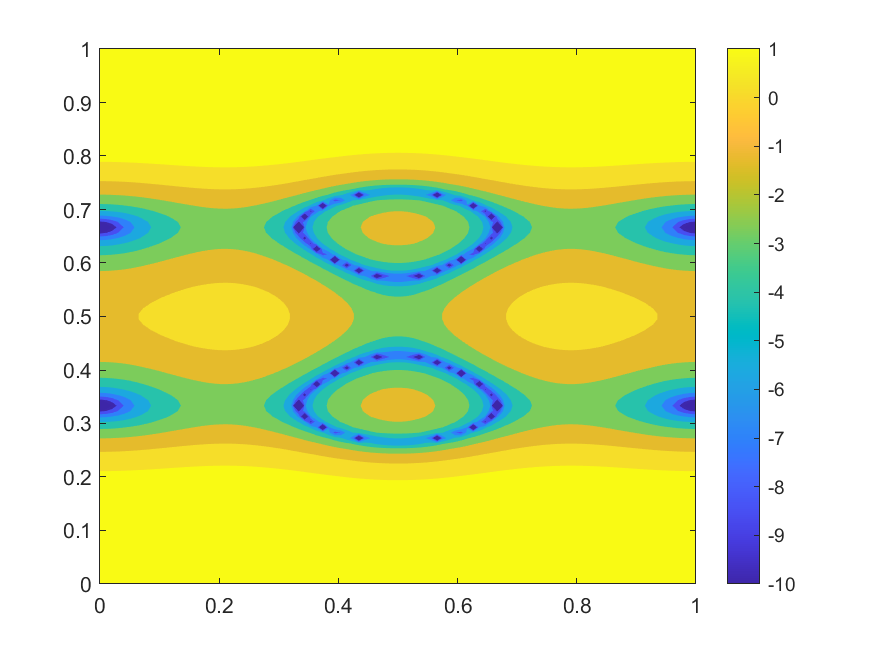}
\includegraphics[width=0.3\textwidth]{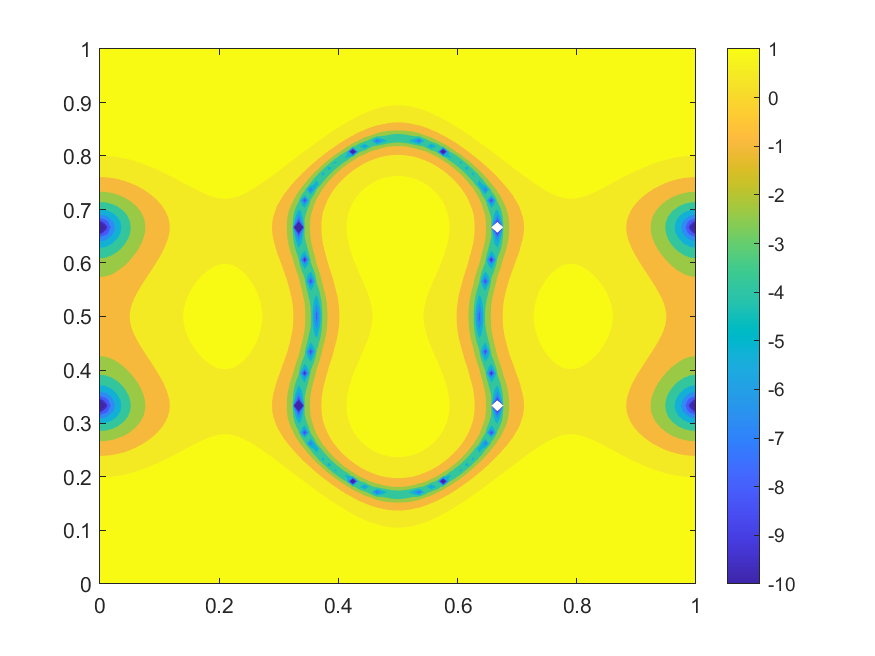}
\includegraphics[width=0.3\textwidth]{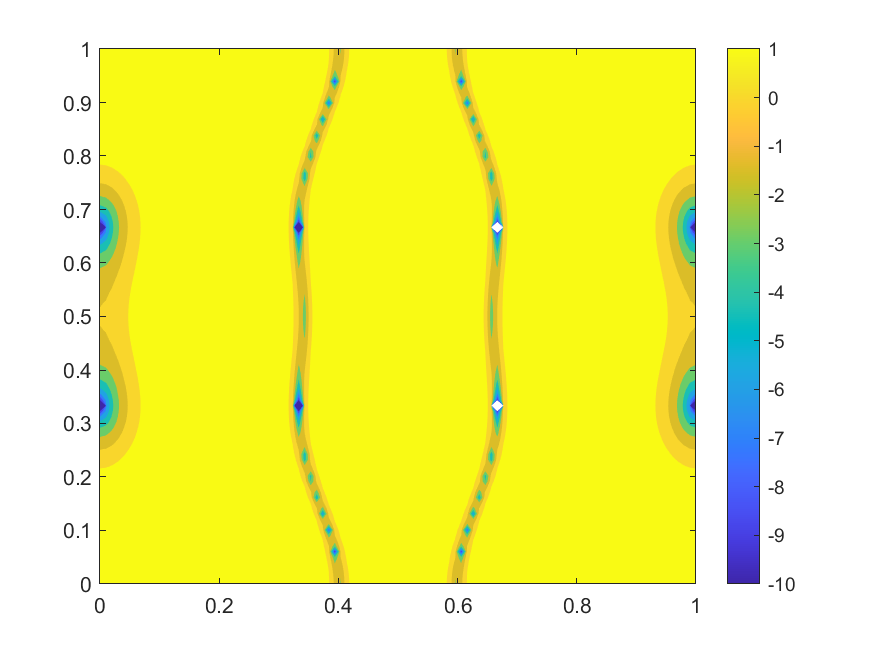}\\
\includegraphics[width=0.3\textwidth]{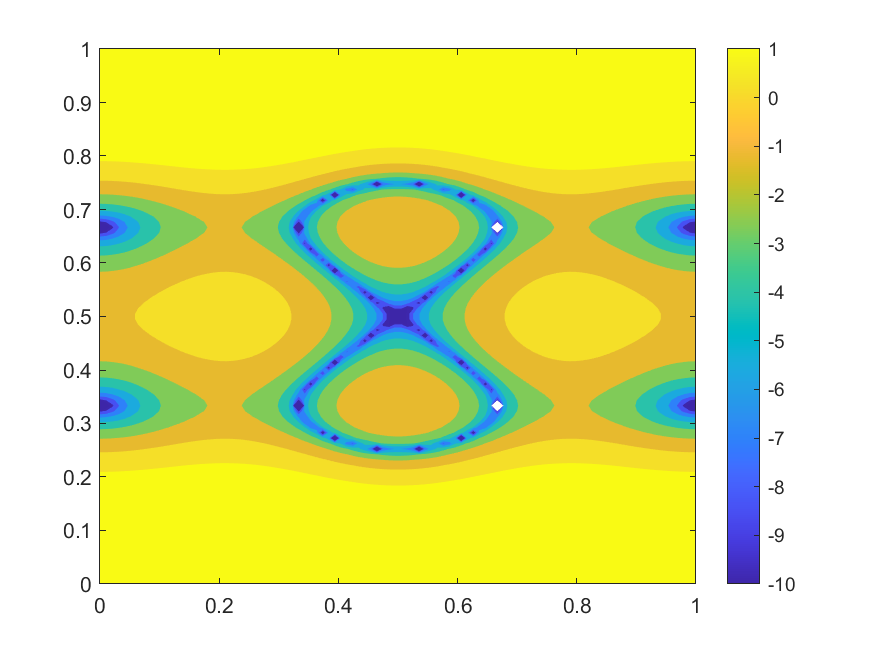}
\includegraphics[width=0.3\textwidth]{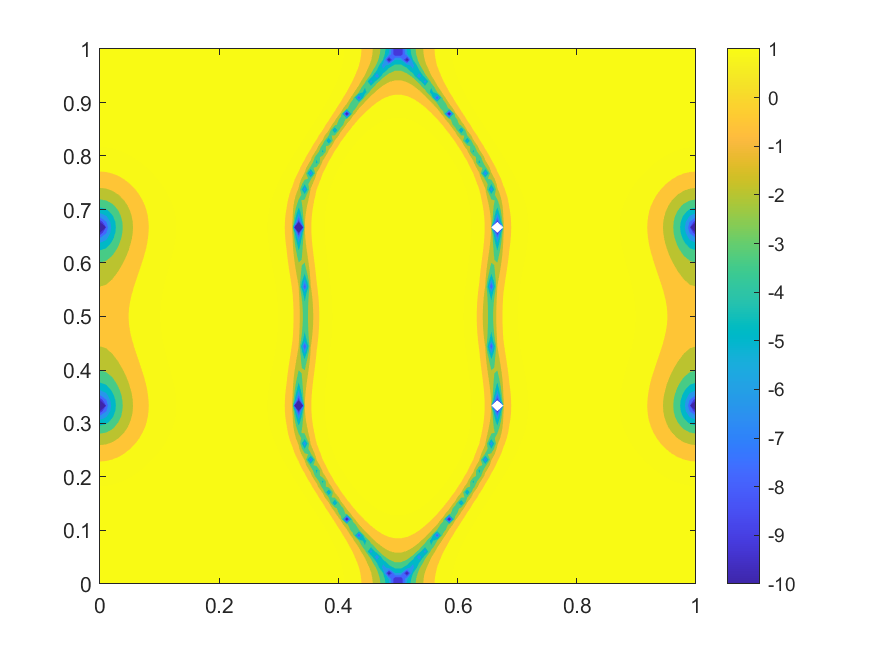}
\end{centering}
\caption{Contour plot of the logarithm of the determinant of $\sigma_0(\mathscr H_\ch ^2)_{11}(x,\xi)$ over one period in the $x$ and $\xi$ directions, showing the zero set consisting of the points $(0,\pm\frac13)$ and one or two closed curves in $T^*\mathbb T$ mod $\ZZ^2$. Here, $k_\perp=0$ and the top panels show the zero set for $w_1=2/5$ (left), $w_1=1$ (middle) and $w_1=2$ (right), while the bottom panels show the special values $w_1=1/2$ (left) and $w_1=3/2$ (right) where the number of closed curves in the zero set switches between one and two.\label{fig:zerosetHarper}}
\end{figure}

If $k_\perp =\frac12+\ZZ$ then $\lambda_+(x,\xi)=0$ precisely when $x=0$ and $\xi=\pm\frac16$ mod $\ZZ$. In this case the characteristic set of $\lambda_-$ is the level set \eqref{eq:levelset2Harper} which again has three connected components in $\RR^2/\ZZ^2$. In particular, when $k_\perp \in\frac12\ZZ$ the eigenvalues are distinct near all points in the characteristic set of $\lambda_-$ except at a set of discrete points.

We now restrict to the case $k_\perp \equiv 0$ mod $\frac12\ZZ$. Since we then have $\varUpsilon_{k_\perp }=\varUpsilon_{-k_\perp }$ the result of Lemma \ref{lem:Harpersquareprincipalsymbol} takes on a simpler form. 
We first compute the full symbol of $\mathscr H_\ch ^2$ where we include a restatement of Lemma \ref{lem:Harpersquareprincipalsymbol} for convenience.

\begin{lemm}\label{lem:reduction3}
Let $\mathscr H_{\operatorname{c}}$ be as in Lemma \ref{lem:reduction0Harper} and $k_\perp =0$ or $k_\perp=\frac12$.  Then $\mathscr H_\ch ^2=\sigma(\mathscr H_\ch ^2)^w(x,hD)$
where
$\sigma_0(\mathscr H_\ch ^2)=\diag( P_{11},P_{22})$
with
\begin{align*}
 P_{11}(x,\xi)&=  \begin{pmatrix} \varUpsilon^2 + f^2+g^2&  
2(i\varUpsilon - g)f  \\ 2(-i\varUpsilon - g)f  &  \varUpsilon^2 + f^2+g^2 \end{pmatrix},\quad P_{22}(x,\xi)=  \begin{pmatrix} \varUpsilon^2 + f^2+g^2&  
2(i\varUpsilon + g)f  \\ 2(-i\varUpsilon + g)f  &  \varUpsilon^2 + f^2+g^2 \end{pmatrix}
\end{align*}
where $f(x)=w_1(1-\cos(2\pi x))$, $g(x)=w_1\sqrt 3\sin(2\pi x)$, and $\varUpsilon(\xi)=2\cos(2\pi\xi)(-1)^{2 k_\perp}+1$, 
and with lower order terms 
\[ \sigma(\mathscr H_\ch ^2)-\sigma_0(\mathscr H_\ch ^2)= \sigma([ig,\varUpsilon^w(hD)])\operatorname{diag}(1,-1,-1,1)-a\diag(\sigma_2,\sigma_2)\]
where $a(x,\xi)=\sigma(\varUpsilon^wf+f\varUpsilon^w)(x,\xi)-2\varUpsilon(\xi)f(x)$.
\end{lemm}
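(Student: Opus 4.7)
The plan is to specialize Lemma~\ref{lem:Harpersquareprincipalsymbol} to $k_\perp\in\{0,1/2\}$ and then extract the subleading terms from the explicit operator expressions derived in its proof. For $k_\perp\in\frac12\ZZ$, the symbol $\varUpsilon_{k_\perp}(\xi)=2\cos(2\pi\xi)e^{2\pi ik_\perp}+1$ reduces to the real-valued function $\varUpsilon(\xi)=2(-1)^{2k_\perp}\cos(2\pi\xi)+1$, so $\varUpsilon_{k_\perp}=\varUpsilon_{-k_\perp}=\varUpsilon$ and in particular $\Im\varUpsilon_{k_\perp}=0$ while $\Re\varUpsilon_{k_\perp}=\varUpsilon$. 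Substituting this into the formulas for $P_{11}$ and $P_{22}$ in Lemma~\ref{lem:Harpersquareprincipalsymbol} immediately yields the principal symbol asserted here, since the term $2g\Im(\varUpsilon_{k_\perp})\diag(1,-1,1,-1)$ appearing there drops out.

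For the subleading terms I would return to the explicit matrix representations of $D_\ch D_\ch^*$ and $D_\ch^*D_\ch$ worked out in the proof of Lemma~\ref{lem:Harpersquareprincipalsymbol}. With $\varUpsilon_{\pm k_\perp}$ collapsed to $\varUpsilon$, these read
\begin{equation*}
D_\ch D_\ch^*=\begin{pmatrix}(\varUpsilon^w)^2+f^2+g^2 & i(\varUpsilon^w f+f\varUpsilon^w)-2fg\\ -i(\varUpsilon^w f+f\varUpsilon^w)-2fg & (\varUpsilon^w)^2+f^2+g^2\end{pmatrix}+[ig,\varUpsilon^w]\sigma_3,
\end{equation*}
and the analogous expression for $D_\ch^*D_\ch$ with $\sigma_3$ replaced by $-\sigma_3$ and the sign of $2fg$ flipped. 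Among these compositions, only $(\varUpsilon^w)^2$, $\varUpsilon^w f+f\varUpsilon^w$, and $[ig,\varUpsilon^w]$ involve Weyl star products of non-commuting factors. Since $\varUpsilon(\xi)$ depends only on $\xi$, $(\varUpsilon^w)^2$ has exact Weyl symbol $\varUpsilon^2$; by the very definition of $a$, the full symbol of $\varUpsilon^w f+f\varUpsilon^w$ equals $2\varUpsilon f+a$; and the commutator contributes exactly $\sigma([ig,\varUpsilon^w])$ on the diagonal.

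To finish, each off-diagonal entry of the first block picks up an additional $\pm ia$ beyond its principal part $2f(\pm i\varUpsilon-g)$; with the Pauli convention the matrix with $(1,2)$-entry $ia$ and $(2,1)$-entry $-ia$ equals $-a\sigma_2$, and the identical calculation for $D_\ch^*D_\ch$ produces the same $-a\sigma_2$ correction. Hence the off-diagonal perturbations combine into $-a\diag(\sigma_2,\sigma_2)$. The diagonal corrections $[ig,\varUpsilon^w]\diag(1,-1)$ and $-[ig,\varUpsilon^w]\diag(1,-1)$ from the two blocks combine into $\sigma([ig,\varUpsilon^w])\diag(1,-1,-1,1)$, giving the stated expression. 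The only real obstacle is careful sign-tracking and the Pauli-matrix identification; since the operator compositions were already unpacked in the proof of Lemma~\ref{lem:Harpersquareprincipalsymbol}, no substantive new analytic work is required.
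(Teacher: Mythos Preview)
Your argument is correct and follows essentially the same approach as the paper, which simply says the result follows by inspecting the proof of Lemma~\ref{lem:Harpersquareprincipalsymbol}, using $\varUpsilon_{k_\perp}=\varUpsilon_{-k_\perp}$, and applying properties of the Weyl calculus. You have unpacked precisely these steps: the reduction of $\varUpsilon_{k_\perp}$ to the real $\varUpsilon$, the resulting simplification of the principal symbol, and the identification of the lower-order corrections from $(\varUpsilon^w)^2$, $\varUpsilon^w f+f\varUpsilon^w$, and $[ig,\varUpsilon^w]$ via the Pauli matrices.
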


\begin{proof}
The result follows by inspecting the proof of Lemma \ref{lem:Harpersquareprincipalsymbol} and using $\varUpsilon_{k_\perp }=\varUpsilon_{-k_\perp }$ together with properties of the Weyl calculus. 
\end{proof}

Consider $k_\perp\in\ZZ$ and perform a symplectic change of variables $\xi= \zeta\pm 1/3$. Then 
$$
\varUpsilon(\xi)
=2\cos(2\pi \zeta\pm 2\pi/3)+1=1-\cos(2\pi \zeta)\mp\sqrt 3\sin(2\pi\zeta)=U^\mp(\zeta)/w_1,
$$
so $(\varUpsilon(\xi))^2=(\varUpsilon(\zeta\pm 1/3))^2=12\pi^2\zeta^2+\mathcal{O}(\zeta^3)$. When $k_\perp=\frac12+\ZZ$ the symplectic change of variables $\xi= \zeta\pm 1/6$ gives 
$$
\varUpsilon(\xi)
=-2\cos(2\pi \zeta\pm \pi/3)+1=1-\cos(2\pi \zeta)\pm\sqrt 3\sin(2\pi\zeta)=U^\pm(\zeta)/w_1,
$$
so again $(\varUpsilon(\xi))^2=12\pi^2\zeta^2+\mathcal{O}(\zeta^3)$. 
The operator $\mathscr H_\ch^2$ in Lemma \ref{lem:reduction3} therefore has wells at $(0,\pm\frac13)$ mod $\ZZ^2$, or at  $(0,\pm\frac16)$ mod $\ZZ^2$, depending on if $k_\perp=0$ or if $k_\perp=\frac12$ mod $\ZZ$, and these wells are degenerate since the eigenvalues of $\sigma_0(\mathscr H_\ch^w)_{jj}$ coalesce there. 
Before turning to quasimodes concentrated near each corresponding well we first compute the lower order terms of the symbol.

\begin{lemm}\label{lem:lowerorder}
Assume that $k_\perp=0$ or $k_\perp=\frac12$. Let $g(x)=w_1\sqrt 3\sin(2\pi x)$, $\varUpsilon(\xi)=2\cos(2\pi\xi)(-1)^{2k_\perp}+1$ and $a(x,\xi)=\sigma(\varUpsilon^wf+f\varUpsilon^w)(x,\xi)-2\varUpsilon(\xi)f(x)$. Then for $N=0,1,\ldots,$ we have
$$
\sigma([ig,\varUpsilon^w(hD)])(x,\xi)=(-1)^{2k_\perp}4\sqrt 3w_1\cos(2\pi x)\sin(2\pi \xi)\sum_{n=0}^N\frac{h^{2n+1}}{(2n+1)!}(2\pi^2)^{2n+1}(-1)^{n}
$$
modulo an error in $\mathcal{O}_{S(1)}(h^{2N+3})$ as $h\to0$, and
$$
a(x,\xi)=-(-1)^{2k_\perp}4w_1\cos(2\pi x)\cos(2\pi\xi)\sum_{n=1}^N\frac{h^{2n}}{(2n)!}(2\pi^2)^{2n}(-1)^n+\mathcal{O}_{S(1)}(h^{2N+2})
$$
as $h\to0$.
\end{lemm}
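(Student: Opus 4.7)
The plan is to apply the Moyal product formula for Weyl quantization, which simplifies drastically because $g$ depends only on $x$ while $\varUpsilon$ depends only on $\xi$. The bidifferential operator generating $a\#b$ has the schematic form $\partial_x \partial_\eta - \partial_\xi \partial_y$, and for our pair only the first term contributes; consequently
\[
(g\#\varUpsilon)(x,\xi) = \sum_{k\ge 0}\frac{1}{k!}\left(\tfrac{ih}{2}\right)^k(\partial_x^k g)(\partial_\xi^k\varUpsilon),
\quad
(\varUpsilon\#g)(x,\xi) = \sum_{k\ge 0}\frac{1}{k!}\left(-\tfrac{ih}{2}\right)^k(\partial_x^k g)(\partial_\xi^k\varUpsilon),
\]
and analogously with $f$ in place of $g$.

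For the first identity I would subtract these two series: only odd $k = 2n+1$ survive, doubled, so multiplying by $i$ gives
\[
\sigma([ig,\varUpsilon^w]) = -\sum_{n\ge 0}\frac{(-1)^n h^{2n+1}}{(2n+1)!\, 4^n}(\partial_x^{2n+1}g)(\partial_\xi^{2n+1}\varUpsilon).
\]
Substituting $\partial_x^{2n+1}\sin(2\pi x)=(-1)^n(2\pi)^{2n+1}\cos(2\pi x)$ and $\partial_\xi^{2n+1}\cos(2\pi\xi)=(-1)^{n+1}(2\pi)^{2n+1}\sin(2\pi\xi)$ yields the prefactor $-2w_1\sqrt 3(-1)^{2k_\perp}(2\pi)^{4n+2}\cos(2\pi x)\sin(2\pi\xi)$, and the elementary identity $(2\pi)^{4n+2}/4^n = 2(2\pi^2)^{2n+1}$ then produces the claimed series.

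For the second identity, adding the two series instead kills odd $k$ and doubles the even ones, giving
\[
\sigma(f\varUpsilon^w+\varUpsilon^w f)(x,\xi) = 2\sum_{n\ge 0}\frac{(-1)^n h^{2n}}{(2n)!\, 4^n}(\partial_x^{2n}f)(\partial_\xi^{2n}\varUpsilon),
\]
whose $n=0$ term is exactly $2\varUpsilon(\xi)f(x)$ and is removed in the definition of $a$. Plugging $\partial_x^{2n}f = -w_1(-1)^n(2\pi)^{2n}\cos(2\pi x)$ and $\partial_\xi^{2n}\varUpsilon = 2(-1)^{2k_\perp}(-1)^n(2\pi)^{2n}\cos(2\pi\xi)$ into the $n\ge 1$ tail, and using $(2\pi)^{4n}/4^n = (2\pi^2)^{2n}$, delivers the second formula. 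The truncation errors are a standard consequence of the Weyl calculus: since $g$, $f$, $\varUpsilon$ and all their derivatives are bounded, cutting the Moyal series at $k=2N+1$ (respectively $k=2N$) leaves a remainder bounded in $S(1)$ by a constant times $h^{2N+3}$ (respectively $h^{2N+2}$). The whole argument is essentially bookkeeping; the only step requiring a bit of care is tracking the two powers-of-$2$ identities that convert $(2\pi)^{4n+2}/4^n$ and $(2\pi)^{4n}/4^n$ into the stated form.
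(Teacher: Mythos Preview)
Your proposal is correct and follows essentially the same approach as the paper: expand the Moyal product, observe that only one half of the bidifferential operator contributes since $g,f$ depend only on $x$ and $\varUpsilon$ only on $\xi$, retain odd (respectively even) indices after subtracting (respectively adding), and then substitute the trigonometric derivatives. The paper writes the expansion using $D=-i\partial$ rather than $\partial$, but the computation and the remainder estimate are identical.
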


\begin{proof}
We have $[ig,\varUpsilon^w]=i(g\#\varUpsilon-\varUpsilon\#g)^w$ where
$$
g\#\varUpsilon(x,\xi)=\sum_{k=0}^K\frac{i^kh^k2^{-k}}{k!}(D_\xi D_y-D_xD_\eta)^k(g(x)\varUpsilon(\eta))\bigg\rvert_{\eta=\xi}+\mathcal{O}_{S(1)}(h^{K+1})
$$
for $K=0,1,\ldots,$ see \cite[Theorem 4.12]{zworski} and \cite[Theorem 4.18]{zworski}. Thus, in the difference $g\#\varUpsilon-\varUpsilon\#g$ all the terms for even $k$ cancel, which gives
$$
i(g\#\varUpsilon-\varUpsilon\#g)=-2i\sum_{n=0}^N\frac{h^{2n+1}2^{-(2n+1)}}{(2n+1)!}\partial_x^{2n+1}g(x)D_\xi^{2n+1}\varUpsilon(\xi)+\mathcal{O}_{S(1)}(h^{2N+3}).
$$
Computing the derivatives shows that the symbol of $[ig,\varUpsilon^w]$ has the stated form. 

Since $a=\varUpsilon\# f+f\#\varUpsilon-2\varUpsilon f$, similar computations also show that 
\begin{align*}
a(x,\xi)=2\sum_{n=1}^N\frac{h^{2n}2^{-2n}}{(2n)!}\partial_x^{2n}f(x)D_\xi^{2n}\varUpsilon(\xi)+\mathcal{O}_{S(1)}(h^{2N+2})
\end{align*}
so after computing the derivatives we obtain the result.
\end{proof}

\begin{prop}\label{prop:degwellHarper}
Let $\mathscr H_\ch ^2=\mathscr H_\ch ^2(x,hD)$ be as in Lemma \ref{lem:reduction3} with $k_\perp=0$ or $k_\perp=\frac12$.  Then
$\sigma(\mathscr H_\ch ^2)_{11}(x,\xi)\sim\sum_{j\ge0}h^jP_j(x,\xi)$ with $P_j\in S(1)$. Moreover, for each $\xi_0=\pm \frac13(\frac12)^{2k_\perp} \!\!\! \mod 1$, $P_0$ has a degenerate well at $(0,\xi_0)$ with
$$
P_0(x,\xi)=12\pi^2((\xi-\xi_0)^2+w_1^2x^2)\id_2+\mathcal O(\lvert (x,\xi)-(0,\xi_0)\rvert^3)
$$
and $P_1(0,\xi_0)=12\pi^2w_1c(\xi_0)\diag(1,-1)$, where $c(\pm \frac13(\frac12)^{2k_\perp})=\pm(-1)^{2k_\perp}$.
\end{prop}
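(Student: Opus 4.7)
The strategy is to collect the ingredients from Lemmas \ref{lem:reduction3} and \ref{lem:lowerorder} and then Taylor expand near each of the points $(0,\xi_0)$ with $\xi_0=\pm\tfrac13(\tfrac12)^{2k_\perp}\bmod 1$.

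\textbf{Symbol expansion and class.} First, from Lemma \ref{lem:reduction3} the full symbol of $\mathscr H_\ch^2$ equals $\sigma_0(\mathscr H_\ch^2)$ plus the diagonal correction $\sigma([ig,\varUpsilon^w(hD)])\diag(1,-1,-1,1)$ and the anti-diagonal correction $-a\diag(\sigma_2,\sigma_2)$. Restricting to the $(1,1)$ block gives
\[
\sigma(\mathscr H_\ch^2)_{11}=P_{11}+\sigma([ig,\varUpsilon^w(hD)])\diag(1,-1)-a\,\sigma_2.
\]
By Lemma \ref{lem:lowerorder} both $\sigma([ig,\varUpsilon^w])$ and $a$ are asymptotic expansions in $h$ whose coefficients are smooth periodic (hence bounded) functions. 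Combined with the fact that $P_{11}\in S(1)$, this yields $\sigma(\mathscr H_\ch^2)_{11}\sim \sum_{j\ge 0}h^jP_j$ with every $P_j\in S(1)$.

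\textbf{Principal symbol at the well.} Next I use the symplectic shift $\xi=\zeta+\xi_0$. The identities computed in the paragraph preceding the proposition give $\varUpsilon(\zeta+\xi_0)=U^{\mp}(\zeta)/w_1$ (for $k_\perp\in\ZZ$) and $\varUpsilon(\zeta+\xi_0)=U^{\pm}(\zeta)/w_1$ (for $k_\perp\in\tfrac12+\ZZ$), so in either case $\varUpsilon(\zeta+\xi_0)^2=12\pi^2\zeta^2+\mathcal O(\zeta^3)$. Taylor expanding $f(x)=w_1(1-\cos(2\pi x))=\mathcal O(x^2)$ and $g(x)=w_1\sqrt 3\sin(2\pi x)=2\pi\sqrt 3w_1 x+\mathcal O(x^3)$ at $x=0$, the diagonal entries of $P_{11}$ give
\[
\varUpsilon^2+f^2+g^2=12\pi^2\zeta^2+12\pi^2 w_1^2 x^2+\mathcal O(|(x,\zeta)|^3).
\]
For the off-diagonal entries $2(\pm i\varUpsilon-g)f$, the factor $f=\mathcal O(x^2)$ combined with $g=\mathcal O(x)$ and $\varUpsilon=\mathcal O(\zeta)$ makes them $\mathcal O(|(x,\zeta)|^3)$. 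Positive semi-definiteness of $P_{11}$ (and hence of $P_0$) is immediate from the identity $\det P_{11}=((\varUpsilon^2+g^2)-f^2)^2\ge0$ together with the positivity of its trace. This establishes the form of $P_0$ required by Definition \ref{def:degeneratewell}, with $a=1$ and $b=w_1^2$ after dividing by $12\pi^2$.

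\textbf{Subprincipal symbol at the well.} Finally, from Lemma \ref{lem:lowerorder} the coefficient of $h$ in $\sigma([ig,\varUpsilon^w])$ equals $(-1)^{2k_\perp}8\sqrt3\pi^2 w_1\cos(2\pi x)\sin(2\pi\xi)$, while $a=\mathcal O(h^2)$. Hence
\[
P_1(x,\xi)=(-1)^{2k_\perp}8\sqrt3\pi^2 w_1\cos(2\pi x)\sin(2\pi\xi)\diag(1,-1).
\]
Evaluating at $(0,\xi_0)$ gives $\cos(0)=1$ and $\sin(2\pi\xi_0)=\sin(\pm 2\pi/3)=\pm\sqrt3/2$ when $k_\perp\in\ZZ$, and $\sin(2\pi\xi_0)=\sin(\pm\pi/3)=\pm\sqrt3/2$ when $k_\perp\in\tfrac12+\ZZ$. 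In both cases one obtains $P_1(0,\xi_0)=12\pi^2w_1c(\xi_0)\diag(1,-1)$ with $c(\pm\tfrac13(\tfrac12)^{2k_\perp})=\pm(-1)^{2k_\perp}$, as claimed.

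The only non-routine step is the identification $\varUpsilon(\zeta\pm\xi_0)=U^{\mp}(\zeta)/w_1$ (respectively with the other sign in the half-integer case), which converts the apparently independent expansions of $\varUpsilon^2$ and $g^2$ into the single quadratic form $12\pi^2(\zeta^2+w_1^2x^2)$; once that coincidence is observed, everything else is bookkeeping.
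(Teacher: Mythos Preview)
Your proof is correct and follows essentially the same route as the paper: invoke Lemmas~\ref{lem:reduction3} and~\ref{lem:lowerorder} for the symbol expansion, use the shift $\xi=\zeta+\xi_0$ and the identity $\varUpsilon(\zeta+\xi_0)=U^{\mp}(\zeta)/w_1$ (or its half-integer analogue) to get the quadratic form of $P_0$, and then evaluate the $h$-coefficient of $\sigma([ig,\varUpsilon^w])$ at $(0,\xi_0)$ for $P_1$. Your explicit verification that $\det P_{11}=((\varUpsilon^2+g^2)-f^2)^2\ge 0$ is a nice addition that the paper leaves implicit; the only point you skip is the observation (already made in the discussion preceding Lemma~\ref{lem:lowerorder}) that the eigenvalues of $P_{11}$ are not identically equal near $(0,\xi_0)$, which is what makes the well \emph{degenerate} in the sense of Definition~\ref{def:degeneratewell}.
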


\begin{proof}
The asymptotic expansion of $\sigma(\mathscr H_\ch^2)$ follows from Lemmas \ref{lem:reduction3} and \ref{lem:lowerorder}, and by the discussion preceding Lemma \ref{lem:lowerorder} we see that $P_0$ has the stated form. Next, with $\xi_0=\pm \frac13(\frac12)^{2k_\perp}$ we note that 
$$
\sin(2\pi(\zeta+\xi_0))=\pm\frac{\sqrt 3}2\cos(2\pi\zeta)-(-1)^{2k_\perp}\frac12\sin(2\pi\zeta)
$$
so with $c(\pm \frac13(\frac12)^{2k_\perp})=\pm(-1)^{2k_\perp}$ it follows from Lemma \ref{lem:lowerorder} that $a=\sigma(\varUpsilon^wf+f\varUpsilon^w)-2\varUpsilon f=\mathcal{O}_{S(1)}(h^{2})$ and
$$
\sigma([ig,\varUpsilon^w(hD)])(0,\xi_0)=c(\xi_0)12\pi^2w_1 h+\mathcal{O}(h^{3})
$$
which in view of Lemma \ref{lem:reduction3} shows that $P_1$ also has the stated form.
\end{proof}

We can now prove existence of quasimodes for the chiral Harper model.

\begin{proof}[Proof of Theorem \ref{cor:periodicquasimodes2}]
For each $\xi_0=\pm \frac13(\frac12)^{2k_\perp}+n_0$, $n_0\in\ZZ$, we find by Proposition \ref{prop:degwellHarper} that $(12\pi^2)^{-1}\sigma(\mathscr H_\ch^2)(x,\xi)$ satisfies the assumptions of Proposition \ref{prop:normalform} with $\mu_j=\pm(-1)^{2k_\perp+j-1}\omega$, $j=1,2$, and $\omega=w_1$. Let $Uv(x)=h^{-\frac14}v(h^{-\frac12}x)$ so that \eqref{eq:normalformpullbackintro} reads $P^w(x,hD)=U\mathcal TU^*$ with $U$ unitary. We then apply Theorem \ref{thm:expansionsintro} to $P^w(x,hD)=(\mathscr H_\ch^2)_{11}$ and obtain approximate eigenvalues $\lambda^{(j)}(n)$ (independent of the integer $n_0$ in the definition of $\xi_0$) such that for any $\ell\in\NN_0$, $\lambda^{(j)}(n) = h \sum_{i = 0}^{2\ell} h^{i/2} \lambda_{i}^{(j)}(n)+\mathcal O(h^{\ell+3/2})$ with 
$$
h^{(j)}_0(n)=((2n+1)\pm(-1)^{2k_\perp+j-1})w_1,
$$
together with quasimodes $v^{(j)}(n,y)$ such that, with $u^{(j)}(n,x)=Uv^{(j)}(n,x)$, we have 
$$
((12\pi^2)^{-1}\mathscr H_\ch^2-\lambda^j(n))u^{(j)}(n)=\mathcal{O}_{\mathscr S}(h^{\ell+\frac32}),
$$
where $\lVert u^{(j)}(n)\rVert_{L^2(\mathbb R)}=1+\mathcal{O}(h^\frac12)$ and $\WF_h( u^{(j)}(n))=\{(0,\xi_0)\}$, $n\in\NN_0$. Multiplying $\lambda^{(j)}(n)$ by $12\pi^2$ and renaming the $\lambda_i$ we get approximate eigenvalues of $(\mathscr H_\ch^2)_{11}$ of the stated form. By Lemma \ref{lem:reduction0Harper}, $\mathscr H_\ch$ is unitarily equivalent to $\mathcal F_h a^w(x,hD)\mathcal F_h^{-1}$ with $a$ given by \eqref{eq:a}, so by repeating the second part of the proof of Theorem \ref{cor:periodicquasimodes} we find that $a^w(x,hD)\mathcal F_h^{-1}w_\pm^{(j)}(n)=\pm\sqrt{\lambda^{(j)}(n)}\mathcal F_h^{-1}w_\pm^{(j)}(n)+\mathcal O_{\mathscr S}(h^{\ell+1})$ for some $w_\pm^{(j)}(n)\in\mathscr S(\RR;\CC^4)$ with $\lVert w_\pm^{(j)}(n)\rVert_{L^2(\mathbb R)}=1+\mathcal{O}(h^\frac12)$ and $\WF_h(w_\pm^{(j)}(n))=\{(0,\xi_0)\}$, $n\in\NN_0$. Clearly, $\lVert \mathcal F_h^{-1}w_\pm^{(j)}(n)\rVert_{L^2(\mathbb R)}=1+\mathcal{O}(h^\frac12)$ and $\WF_h(\mathcal F_h^{-1}w_\pm^{(j)}(n))=\{(\xi_0,0)\}$, so to obtain the stated quasimodes $\psi_\pm^{(j)}(n)\in L^2(\TT;\CC^4)$ for $H_{\operatorname{\Psi DO}}(0,w_1)=a^w(x,hD):L^2(\TT)\to L^2(\TT)$ we now simply repeat the arguments used to prove Corollary \ref{cor:periodicquasimodesnormalform}. We omit the details. 
\end{proof}

\begin{appendix}

\section{Auxiliary results}
\label{sec:aux_results}

\label{sec:auxLemma}

Here we provide some results used in the main text, starting with a Rayleigh-Ritz principle stated for the massive (non-semiclassical) Weyl operator $T_\mathrm{mass}^w(y,D)$.

\begin{lemm}[Rayleigh-Ritz principle]\label{thm:rayleighritz}
Let $T^w(y,D;h)$ be a self-adjoint operator semi-bounded from below, $T^w\ge -Ch$, and let $G(y,\eta;h)\in C_0^\infty(T^*\RR)$ with $0\le G\le1$. Set 
$$
T_\mathrm{mass}^w=T^w+(1-G^w)\id_{\CC^{2\times2}}
$$
and assume that there exists a set $\{\psi_n\}_{n\in\NN}$ of functions satisfying \eqref{eq:bsimon1} and \eqref{eq:bsimon2}. Then, as a densely defined operator on $L^2(\RR)$, $T_\mathrm{mass}^w$ has at least $n$ eigenvalues $\lambda_1(h)\le\ldots\le \lambda_n(h)$ counting multiplicity and $\varlimsup_{h\to0^+} \lambda_n(h)/h\le e_n$, with $e_n$ being the number appearing in \eqref{eq:bsimon2}.
\end{lemm}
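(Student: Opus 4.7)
The plan is to apply the Courant–Fischer min-max principle using $\{\psi_1,\dots,\psi_n\}$ as a test family. Since the Gram matrix $\Gamma=((\psi_j,\psi_k))_{j,k=1}^n$ equals $I_n+\mathcal O(h^\infty)$ by \eqref{eq:bsimon1}, it is positive definite for $h$ small, so $V_n=\operatorname{span}\{\psi_1,\ldots,\psi_n\}$ has dimension $n$. Writing a general $v\in V_n$ in coordinates as $v=\sum_j c_j\psi_j$ with $c=(c_1,\ldots,c_n)$, \eqref{eq:bsimon1} and \eqref{eq:bsimon2} give
\[
(v,v)=(1+\mathcal O(h^\infty))|c|^2,\qquad h^{-1}(T_{\mathrm{mass}}^w v,v)=c^*Mc,\quad M=\operatorname{diag}(e_1,\ldots,e_n)+\mathcal O(h^{1/5}).
\]
By continuity of eigenvalues of Hermitian matrices, the largest generalized eigenvalue of the pencil $(M,\Gamma)$ is $e_n+o(1)$ as $h\to0^+$, hence
\[
\sup_{v\in V_n\setminus\{0\}}\frac{(T_{\mathrm{mass}}^w v,v)}{(v,v)}\le h\bigl(e_n+o(1)\bigr).
\]

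The standard min-max principle for lower-semibounded self-adjoint operators then yields
\[
\mu_n(h):=\inf_{\substack{W\subset\mathcal D(T_{\mathrm{mass}}^w)\\\dim W=n}}\sup_{\substack{v\in W\\\|v\|=1}}(T_{\mathrm{mass}}^w v,v)\le h\bigl(e_n+o(1)\bigr),
\]
where $\mu_n(h)$ equals the $n$-th eigenvalue of $T_{\mathrm{mass}}^w$ counted in increasing order with multiplicity as long as $\mu_n(h)<\inf\sigma_{\mathrm{ess}}(T_{\mathrm{mass}}^w)$; otherwise $\mu_n(h)=\inf\sigma_{\mathrm{ess}}(T_{\mathrm{mass}}^w)$. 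The final task is to rule out the essential-spectrum alternative for $h$ small. Since $G\in C_0^\infty(T^*\RR)$ with values in $[0,1]$, the Weyl quantization $G^w$ is compact on $L^2(\RR;\CC^2)$ (in the application to Proposition \ref{prop:upper} this is immediate from the smooth compactly supported symbol; in general one invokes relative compactness with respect to $T^w$). Weyl's theorem on compact perturbations then yields
\[
\inf\sigma_{\mathrm{ess}}(T_{\mathrm{mass}}^w)=1+\inf\sigma_{\mathrm{ess}}(T^w)\ge 1-Ch,
\]
using the lower bound $T^w\ge -Ch$. For $h$ sufficiently small this strictly exceeds $h(e_n+o(1))$, so $T_{\mathrm{mass}}^w$ must have at least $n$ eigenvalues below its essential spectrum, and $\lambda_n(h)=\mu_n(h)\le h(e_n+o(1))$, from which $\varlimsup_{h\to0^+}\lambda_n(h)/h\le e_n$ follows.

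The main obstacle is really just the essential-spectrum step rather than the variational estimate itself. If one wanted to state the lemma in full generality the $T^w$-compactness of $G^w$ would require the Calderón–Vaillancourt theorem together with relative compactness arguments; for the concrete operators in Proposition \ref{prop:upper}, however, $T^w\sim hT_0^w$ with $T_0^w$ the (matrix) harmonic oscillator, whose resolvent is compact, so the compactness of $G^w$ relative to $T^w$ is immediate and the essential-spectrum bound is clean.
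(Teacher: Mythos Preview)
Your proof is correct, and the variational half is essentially the paper's argument in dual form: both feed the test family $\{\psi_1,\ldots,\psi_n\}$ into the min-max principle, you via the $\inf$--$\sup$ over $n$-dimensional subspaces, the paper via the $\sup$--$\inf$ over $(n-1)$-tuples of constraint vectors, arriving at the same bound $\mu_n(h)\le h(e_n+o(1))$.

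The essential-spectrum step is where you genuinely diverge. You invoke Weyl's theorem on compact perturbations: since $G^w$ is compact, $\sigma_{\operatorname{ess}}(T_\mathrm{mass}^w)=\sigma_{\operatorname{ess}}(T^w+1)=1+\sigma_{\operatorname{ess}}(T^w)\subset[1-Ch,\infty)$, which immediately separates $\mu_n(h)$ from the essential spectrum. The paper instead factors $T_\mathrm{mass}^w-z=(T^w+1-z)(1-K(z))$ with $K(z)=(T^w+1-z)^{-1}G^w$ compact, checks $\lVert K(z_0)\rVert<1$ for $\Re z_0\ll-1$, and applies analytic Fredholm theory to conclude the resolvent is meromorphic near $\RR_{\le0}$, hence $T_\mathrm{mass}^w-z$ is Fredholm there. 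Your route is more elementary and entirely sufficient for the lemma; the paper's argument yields additional structure (meromorphic resolvent) that is not actually used downstream. One minor comment: your hedging about needing relative compactness ``in full generality'' is unnecessary---the lemma hypothesizes $G\in C_0^\infty(T^*\RR)$, which already makes $G^w$ compact on $L^2$ outright, so the direct Weyl argument applies as stated.
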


\begin{proof}
Let
$$
\mu_n(h)=\sup_{\zeta_1,\ldots,\zeta_{n-1}} Q(\zeta_1,\ldots,\zeta_{n-1};h),
$$
where 
$$
Q(\zeta_1,\ldots,\zeta_{n-1};h)=\inf\{(T_\mathrm{mass}^w\psi,\psi):\psi\in D(T_\mathrm{mass}^w),\ \lVert\psi\rVert=1,\ \psi\in[\zeta_1,\ldots,\zeta_{n-1}]^\perp\}.
$$

{\it Step 1:} Since $T_\mathrm{mass}^w$ is self-adjoint and semi-bounded from below, the min-max principle (see \cite[Theorem XIII.1]{MR0493421}) implies that $T_\mathrm{mass}^w$ has at least $n$ eigenvalues $\lambda_1(h),\ldots,\lambda_n(h)$ counting multiplicity and either $\mu_n(h)=\lambda_n(h)$ or $\mu_n(h)=\inf\sigma_{\operatorname{ess}}(T_\mathrm{mass}^w)$.

{\it Step 2:} 
Let $\varepsilon>0$ be fixed but arbitrary, and for each $h$, choose $\zeta_1^h,\ldots,\zeta_{n-1}^h$ so that
\begin{equation}\label{eq:Qplusve}
\mu_n(h)\le Q(\zeta_1^h,\ldots,\zeta_{n-1}^h;h)+\epsilon.
\end{equation}
From \eqref{eq:bsimon1} it follows that for $h$ small, $\psi_1,\ldots,\psi_n$ span an $n$-dimensional space, so for sufficiently small $h$ we can find a linear combination $\psi$ of $\psi_1,\ldots,\psi_n$ such that $\psi\in[\zeta_1^h,\ldots,\zeta_{n-1}^h]^\perp$. By \eqref{eq:bsimon2} we then get
$$
Q(\zeta_1^h,\ldots,\zeta_{n-1}^h;h)\le he_n+\mathcal{O}(h^{6/5}).
$$ 
From \eqref{eq:Qplusve} and the fact that $\epsilon$ was arbitrary we find that
\begin{equation}\label{eq:usingQplusve}
\mu_n(h)\le  he_n+\mathcal{O}(h^{6/5}). 
\end{equation}

{\it Step 3:} Fix another $0<\epsilon<1$ and let $\Omega_\epsilon\subset\CC$ be an $\epsilon$-neighborhood of the negative half-line $\RR_-=\{x\in\RR:x<0\}$. Let $z\in\Omega_\epsilon$. Since $T^w\ge -Ch$ it follows that $T^w+1-z$ is invertible for $z\in\Omega_\epsilon$ and $h$ small, and by the pseudodifferential calculus $G^w$ is compact, see \cite[Theorem 4.28]{zworski}. We then have 
$$
T_\mathrm{mass}^w-z =(T^w+1-z)(1-K(z)),
$$
where $K(z)= (T^w+1-z)^{-1}G^w$ is compact. Also, since $0\le G\le1$ we have $\lVert K(z_0)\rVert\le \lVert (T^w+1-z_0)^{-1}\rVert<1$ for $z_0\in\Omega_\epsilon$ with $\Re z_0\ll -1$, so $1-K(z_0)$ is invertible. By analytic Fredholm theory it then follows that the resolvent $(T_\mathrm{mass}^w-z)^{-1}$ is meromorphic in $\Omega_\epsilon$ (see e.g., \cite[Theorem D.4]{zworski})
which implies that $T_\mathrm{mass}^w-z$ is Fredholm for $z\in\Omega_\epsilon$, see e.g., \cite[Theorem 9.6]{taylor1966theorems}. By definition we then have $\sigma_{\operatorname{ess}}(T_\mathrm{mass}^w)\bigcap\Omega_\epsilon=\emptyset$.

{\it Step 4:} By step 3 we have $\inf\sigma_{\operatorname{ess}}(T_\mathrm{mass}^w)\ge \varepsilon>0$ so  \eqref{eq:usingQplusve} implies that
$$
\mu_n(h)\le he_n+\mathcal{O}(h^{6/5})<\varepsilon\le \sigma_{\operatorname{ess}}(T_\mathrm{mass}^w)
$$
for $h$ small, so $\mu_n(h)\ne \sigma_{\operatorname{ess}}(T_\mathrm{mass}^w)$. From step 1 we conclude that $\mu_n(h)=\lambda_n(h)$, where $\lambda_n(h)$ is the $n$:th eigenvalue of $T_\mathrm{mass}^w$ counting multiplicity. By \eqref{eq:usingQplusve} we then get
$$
\lambda_n(h)/h=\mu_n(h)/h\le e_n+\mathcal{O}(h^{1/5})\quad\Longrightarrow\quad \varlimsup_{h\to0^+} \lambda_n(h)/h\le e_n,
$$
which completes the proof.
\end{proof}

Next, we shall provide proofs of the IMS formula (Lemma \ref{lem:IMS}) and Lemma \ref{lem:remainderboundL2periodic}. For this we shall use a pseudodifferential partition of unity. Let $J$ be as in Section \ref{sec:wellsnormalform}, that is, $J\in C_0^\infty(\RR)$ with $0\le J\le 1$ and $J(y)=1$ (resp.~0) if $\lvert y\rvert\le 1$ (resp.~$\lvert y\rvert\ge 2$). Let $\chi_1$ be given by \eqref{eq:chi1}, that is,
\begin{equation*}
\chi_1(x,\xi)=J(h^{-2/5}x)J(h^{-2/5}\xi).
\end{equation*}
Recall also from \eqref{eq:pou} that $(\chi_0(x,\xi))^2+(\chi_1(x,\xi))^2=1$.

\begin{lemm}\label{lem:pou}
Let $\chi_0$ and $\chi_1$ be as above. Then there are $X_0^w\in\Psi_{2/5}^{0,0}(\RR)$ and $X_1^w\in\Psi_{2/5}^{0,-\infty}(\RR)$ such that 
$X_j=\chi_j$ modulo $S^{-\infty,-\infty}(T^*\RR)$
and
$$
(X_0^w)^2+(X_1^w)^2=\id+R,\quad R\in\Psi^{-\infty,-\infty}(\RR),\quad (X_j^w)^*=X_j^w.
$$
\end{lemm}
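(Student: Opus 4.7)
The strategy is an iterative correction procedure in the exotic symbol class $S_{2/5}$, combined with Borel summation. Since $\chi_0, \chi_1$ are real valued, I will arrange for each correction to be real, which automatically guarantees the self-adjointness $(X_j^w)^\ast = X_j^w$.

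I start by analyzing the defect of the naive choice $X_j^{(0)} = \chi_j$. Expanding the Moyal product gives
$$\chi_0\#\chi_0 + \chi_1\#\chi_1 \;=\; \chi_0^2+\chi_1^2 + \sum_{k\ge 1} h^{2k}s_k \;=\; 1+\sum_{k\ge 1} h^{2k}s_k,$$
where the odd-order Moyal terms vanish because $\sigma(D)^{2k+1}$ is antisymmetric while $\chi_j\otimes\chi_j$ is symmetric. Each $s_k$ involves $2k$ derivatives of $\chi_0,\chi_1$. Since these derivatives are supported in the transition region of the partition of unity (at scale $h^{2/5}$) and each derivative costs a factor $h^{-2/5}$ in the exotic calculus, the term $h^{2k}s_k$ lies in $h^{2k/5}S_{2/5}^{0,-\infty}$; each $s_k$ is real by self-adjointness of $(\chi_j^w)^2$.

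Now iterate. Suppose real symbols $a_j^{(1)},\dots,a_j^{(N)}$ with $a_j^{(m)}\in h^{2m/5}S_{2/5}^{0,-\infty}$ have been constructed so that $Y_j^{(N)}:=\chi_j+\sum_{m=1}^N a_j^{(m)}$ satisfies
$$\sum_{j=0}^1 Y_j^{(N)}\# Y_j^{(N)} \;=\; 1+h^{2(N+1)/5}\rho_{N+1}+\mathcal O(h^{2(N+2)/5})$$
with $\rho_{N+1}\in S_{2/5}^{0,-\infty}$ real. Adding $a_j^{(N+1)}$ changes the leading-order error by $2\chi_j a_j^{(N+1)}$, the remaining Moyal contributions being of strictly higher order. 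Setting $a_j^{(N+1)} := -\tfrac12 h^{2(N+1)/5}\chi_j\rho_{N+1}$, which is real and in $h^{2(N+1)/5}S_{2/5}^{0,-\infty}$, and using $\chi_0^2+\chi_1^2=1$, one finds
$$2\chi_0 a_0^{(N+1)} + 2\chi_1 a_1^{(N+1)} = -h^{2(N+1)/5}(\chi_0^2+\chi_1^2)\rho_{N+1} = -h^{2(N+1)/5}\rho_{N+1},$$
cancelling the leading error and reducing to the next stage of the induction.

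By Borel summation in $S_{2/5}$, one obtains real symbols $X_j\sim \chi_j+\sum_{m\ge 1} a_j^{(m)}$. The class memberships $X_0\in S_{2/5}^{0,0}$ (since $\chi_0$ is bounded and the corrections have compact $\xi$-support) and $X_1\in S_{2/5}^{0,-\infty}$ (since $\chi_1$ and all corrections are compactly supported) follow from the construction. The iterative cancellation at every order in $h^{2/5}$ ensures that $\sum_j X_j\# X_j - 1$ vanishes to infinite order, so $\sum_j (X_j^w)^2 = \id + R$ with $R\in\Psi^{-\infty,-\infty}$, and the relation $X_j = \chi_j$ is to be read in the corresponding asymptotic sense. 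Self-adjointness is inherited from the reality of each term in the expansion. The main technical obstacle is the bookkeeping of orders in the exotic calculus: one must verify at every stage that the new correction remains in $S_{2/5}^{0,-\infty}$ and that the cross terms in the Moyal product from the newly added piece do not spoil the cancellation at previously achieved orders; once this is settled, the Borel summation is standard.
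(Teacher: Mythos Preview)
Your proof is correct and takes a genuinely different route from the paper. The paper uses an operator-theoretic iteration: from the defect $(\chi_0^w)^2+(\chi_1^w)^2=\id+r_1^w$ with $r_1\in S_{2/5}^{-2/5,-\infty}$, it sets $X_j^{1}=(1+r_1^w)^{-1/4}\chi_j^w(1+r_1^w)^{-1/4}$ via functional calculus, shows the new defect $r_2$ lies in $S_{2/5}^{-3/5,-\infty}$, and repeats. You instead work purely at the symbolic level, adding real corrections $a_j^{(N+1)}=-\tfrac12 h^{2(N+1)/5}\chi_j\rho_{N+1}$ and using the pointwise identity $\chi_0^2+\chi_1^2=1$ to cancel the leading error, then Borel-summing. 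Your iteration actually improves the remainder by $h^{2/5}$ per step (because in the symmetric combination $Y_j\#a+a\#Y_j$ the odd Moyal terms cancel), whereas the paper gains only $h^{1/5}$ per step (commutators with $(1+r)^{-1/4}$ enter, and these are governed by the first Moyal term); both reach $O(h^\infty)$ after iteration. Your approach is more elementary in that it avoids functional calculus of $1+r^w$, while the paper's makes self-adjointness and the square-sum structure manifest at every step without having to track reality by hand. One small remark: the clause ``$X_j=\chi_j$ modulo $S^{-\infty,-\infty}$'' in the statement is not literally produced by either construction --- both yield $X_j-\chi_j\in S_{2/5}^{-2/5,-\infty}$ --- so your comment that the relation ``is to be read in the corresponding asymptotic sense'' is apt, and indeed this is all that is actually used in the subsequent applications (support properties of $\partial X_k$ and the identity $(X_0^w)^2+(X_1^w)^2=\id$ mod $\Psi^{-\infty,-\infty}$).
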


\begin{proof}
We adapt the proof of \cite[Lemma 3.2]{sjostrand2007fractal} to symbols in $S_\delta^{m,k}$. Using the Weyl calculus we can write
$$
(\chi_0^w)^2+(\chi_1^w)^2=\id+r_1^w,\quad r_1\in S_{2/5}^{-2/5,-\infty}(T^*\RR)
$$
where we have taken advantage of \eqref{eq:pou} and the fact that the Poisson bracket $\{\chi_j,\chi_j\}$ vanishes, so the symbol of $(\chi_j^w)^2$ is $\chi_j^2$ modulo $S_{2/5}^{-2/5,-\infty}(T^*\RR)$. For $h$ small we set
$$
X_j^{1}=(1+r_1^w)^{-1/4}\chi_j^w(1+r_1^w)^{-1/4}.
$$
Then $X_0^1\in \Psi_{2/5}^{0,0}$ and $X_1^1\in \Psi_{2/5}^{0,-\infty}$ and since $r_1\in S_{2/5}^{-2/5,-\infty}$ we have $\sigma(X_j^1)=\chi_j$ modulo $S_{2/5}^{-2/5,-\infty}$. It is also easy to see that
$$
(X_0^1)^2+(X_1^1)^2=\id+r_2^w, \quad r_2\in S_{2/5}^{-3/5,-\infty}(T^*\RR),\quad (X_j^1)^*=X_j^1,
$$
by using the fact that $[(1+r_1^w)^{-\frac12},\chi_j^w]\in \Psi_{2/5}^{-3/5,-\infty}$ and $[(1+r_1^w)^{-\frac14},(\chi_j^w)^2]\in \Psi_{2/5}^{-3/5,-\infty}$. The result therefore follows by iterating this procedure.
\end{proof}

Recall that the massive term $1-\chi^w$ in \eqref{eq:massive} is defined by a cutoff function $\chi\in C_0^\infty(T^*\RR)$ independent of $h$.
For $h$ small we then have
\begin{equation*}
\supp\partial \chi_k\bigcap\supp(1-\chi)=\emptyset,\quad k=0,1,
\end{equation*}
where $\partial \chi_k$ is shorthand for the first order derivatives of $\chi_k$. In view of Lemma \ref{lem:pou} we get
\begin{equation}\label{eq:suppcondX}
X_k^w(1-\chi^w)\in\Psi^{-\infty,-\infty}(\RR),\quad k=0,1,
\end{equation}
by the Weyl calculus.

\begin{proof}[Proof of Lemma \ref{lem:IMS}]
We have
$$
( X_k^w)^2P_{\operatorname{mass}}^w+P_{\operatorname{mass}}^w( X_k^w)^2-2 X_k^w P_{\operatorname{mass}}^w X_k^w
=[ X_k^w,[ X_k^w,P_{\operatorname{mass}}^w]]
$$
so if we show that $[ X_k^w,[ X_k^w,P_{\operatorname{mass}}^w]]=\mathcal{O}(h^{6/5})$ on $L^2(\RR)$, then the result follows by summing over $k$ since $( X_0^w)^2+( X_1^w)^2=1$ mod $\Psi^{-\infty,-\infty}$ by Lemma \ref{lem:pou}. When proving the estimate we may replace $P_{\operatorname{mass}}^w$ by $P^w$ since $X_k^w(1-\chi^w)=\mathcal{O}(h^\infty)$ in $L^2$ by \eqref{eq:suppcondX}.

By Lemma \ref{lem:pou} we have $\partial  X_k\in S_{2/5}^{2/5,-\infty}$, where $\partial X_k$ is shorthand for the first order derivatives of $ X_k$. 
By assumption we have $P\in S^{0,k}(T^*\RR)$, $k\ge0$. Hence, $ X_k\# P-P\# X_k\in S_{2/5}^{-3/5,-\infty}\subset h^{3/5}S_{2/5}(1)$ by the symbol calculus, the main point being that it is bounded. In fact, inspecting \cite[Theorem 4.12]{zworski} and \cite[Theorem 4.18]{zworski} we see that
$$
\sigma([ X_k^w,P^w])= X_k\# P-P\# X_k
=-ih\{ X_k,P\}+\mathcal{O}_{S_{2/5}(1)}(h^{3(1-2/5)}).
$$
Since $\partial P_0=\mathcal{O}(h^{2/5})$ on the support of $\partial X_k$ by
Definition \ref{def:degeneratewell} we find that $\sigma([ X_k^w,P^w])=\mathcal{O}_{S_{2/5}(1)}(h)$. By \cite[Theorem 4.18]{zworski} we then get
$$
\sigma([ X_k^w,[ X_k^w,P^w]])=-ih\{ X_k,\sigma([ X_k^w,P^w])\}+h\mathcal{O}_{S_{2/5}(1)}(h^{3(1-4/5)})=\mathcal{O}_{S_{2/5}(1)}(h^{6/5}),
$$
which implies that $[ X_k^w,[ X_k^w,P^w]]=\mathcal{O}(h^{6/5})$ on $L^2(\RR)$, see \cite[Theorem 4.23]{zworski}.
\end{proof}

\begin{proof}[Proof of Lemma \ref{lem:remainderboundL2periodic}]
As in the proof of Lemma \ref{lem:IMS} we may replace $P_\mathrm{mass}^w$ by $P^w$ without changing the estimate. Since $ X_1^w(P^w-H_0^w) X_1^w=\gamma^*\circ (h^{3/2}J_1^w R_0^w J_1^w)\circ (\gamma^{-1})^*$ is unitarily equivalent to $ h^{3/2}J_1^wR_0^w J_1^w$ the result then follows from Lemma \ref{lem:remainderboundL2}.
\end{proof}

\section{Degenerate wells in the anti-chiral limit}\label{sec:antichiral}

Here we give a brief presentation of degenerate potential wells for the pseudodifferential Harper model in the anti-chiral limit.
The anti-chiral model allows for various quasimodes at potential wells located at different energy levels, but not necessarily at zero. To see this, we use the following simple lemma which reduces the analysis to scalar pseudodifferential operators.
\begin{lemm}
Let $H_{\operatorname{\Psi DO}}(w)$ be given by \eqref{eq:HarperPDO} in the anti-chiral limit $w=(w_0,0)$, with $k_{\perp} \in \ZZ/2$. Then $H_{\operatorname{\Psi DO}}(w_0,0)$ is unitarily equivalent to a Hamiltonian of diagonal form with symbol, with $+$ for $k_{\perp} \in \ZZ$ and $-$ for $k_{\perp} \in \ZZ+1/2$,
\[ \begin{split} \mathscr H_{\operatorname{ac}}(x,\xi) =\operatorname{diag}&(-(1\pm 2 \cos(2\pi \xi))-w_0U(x), -(1\pm 2 \cos(2\pi \xi))+w_0U(x),\\
&(1\pm 2 \cos(2\pi \xi))-w_0U(x),(1\pm 2 \cos(2\pi \xi))+w_0U(x)).\end{split}\] 
\end{lemm}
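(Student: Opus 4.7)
The plan is to compute $b(x,\xi)$ explicitly in the anti-chiral limit and then diagonalize it by a constant (i.e.\ $(x,\xi)$-independent) unitary conjugation, so that the diagonalization commutes with Weyl quantization.

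First I would specialize \eqref{eq:b} to $w=(w_0,0)$ and $k_\perp\in\ZZ/2$. Since $\cos(2\pi k_\perp)=\pm1$ and $\sin(2\pi k_\perp)=0$ for such $k_\perp$, one has $\mathbf t(k_\perp)=\pm\gamma_{15}$, so together with $\mathbf t_0=\gamma_{15}$ the kinetic piece becomes $(1\pm 2\cos(2\pi\xi))\gamma_{15}$, with $+$ for $k_\perp\in\ZZ$ and $-$ for $k_\perp\in\ZZ+\tfrac12$. The potential in the anti-chiral limit is $V_w(x)=w_0U(x)\bigl(\begin{smallmatrix}0&\id\\\id&0\end{smallmatrix}\bigr)$ with $U=U_{\operatorname{ac}}$. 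Using the tensor-product identifications $\gamma_{15}=I_2\otimes\sigma_1$ and $\bigl(\begin{smallmatrix}0&\id\\\id&0\end{smallmatrix}\bigr)=\sigma_1\otimes I_2$, this gives
\[
b(x,\xi)=a_{\pm}(\xi)\,(I_2\otimes\sigma_1)+w_0U(x)\,(\sigma_1\otimes I_2),\qquad a_{\pm}(\xi):=1\pm 2\cos(2\pi\xi).
\]

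The key observation is that the two matrix factors $I_2\otimes\sigma_1$ and $\sigma_1\otimes I_2$ commute, so they are simultaneously diagonalizable by a constant unitary matrix. Explicitly, let
\[
U_2=\tfrac{1}{\sqrt 2}\begin{pmatrix} 1 & 1\\ 1 & -1\end{pmatrix},\qquad U_2^\ast\sigma_1 U_2=\operatorname{diag}(1,-1),
\]
and set $\mathcal U=U_2\otimes U_2\in\CC^{4\times 4}$. Then $\mathcal U^\ast b(x,\xi)\mathcal U$ is diagonal with entries $\pm a_{\pm}(\xi)\pm w_0 U(x)$ over all four sign combinations, which is the same multiset as in the statement of the lemma.

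Since $\mathcal U$ is independent of $(x,\xi)$, the Weyl calculus yields $\mathcal U^\ast b^w(x,hD)\mathcal U=(\mathcal U^\ast b\,\mathcal U)^w(x,hD)$, so $b^w(x,hD)$ is unitarily equivalent to the diagonal pseudodifferential operator with symbol $\mathcal U^\ast b\,\mathcal U$. A final permutation of the four coordinates (itself a unitary transformation on $L^2(\Sone;\CC^4)$) reorders the diagonal entries to match the precise arrangement displayed in $\mathscr H_{\operatorname{ac}}$. There is no analytic obstacle here; the only thing to watch is the bookkeeping of signs in $a_\pm$ and the placement of $\pm w_0 U$ across the four slots, which the permutation takes care of.
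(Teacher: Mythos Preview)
Your proof is correct and follows essentially the same approach as the paper: both diagonalize $b(x,\xi)$ by conjugation with a constant $4\times4$ unitary, which then commutes with Weyl quantization. The paper simply writes down an explicit Hadamard-type matrix $\mathscr U$ and asserts $\mathscr U^\ast b\,\mathscr U=\mathscr H_{\operatorname{ac}}$, whereas your tensor-product factorization $\mathcal U=U_2\otimes U_2$ makes transparent \emph{why} such a simultaneous diagonalization exists (the two commuting pieces $I_2\otimes\sigma_1$ and $\sigma_1\otimes I_2$); your $\mathcal U$ differs from the paper's $\mathscr U$ only by a signed column permutation, which is exactly the final reordering you mention.
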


\begin{proof}
Let $b(x,\xi)$ be given by \eqref{eq:b} with $w=(w_0,0)$ and $k_{\perp} \in \ZZ/2$, so that $b^w(x,hD)=H_{\operatorname{\Psi DO}}(w_0,0)$. Conjugating with 
\begin{equation}\label{eq:conjach}
\mathscr U =\frac12 \begin{pmatrix*}[r] 1 & - 1 & -1 & 1 \\ -1 & 1 & -1 & 1 \\ -1 & -1 &1& 1\\ 1 & 1 & 1 &1  \end{pmatrix*},
\end{equation}
we find that $\mathscr H_{\operatorname{ac}}(x,\xi):=\mathscr U^*b(x,\xi) \mathscr U$ has the stated form.
\end{proof}
It is now easy to see from the diagonal form, since both the position and momentum variable appears in terms of a cosine 
\begin{equation*}
\cos(2\pi t) = \pm 1 \mp 2\pi^2 (t- t_0)^2 + \mathcal O((t - t_0)^4)
\end{equation*}
with $+$ in case of $t_0 \in \ZZ$ and $-$ for $t_0 \in \ZZ+1/2$, that the Hamiltonian $\mathscr H_{\operatorname{ac}}(x,\xi)$ admits quasimodes at various energy levels by the scalar Bohr-Sommerfeld rule given by Theorem \ref{theo:DiSJ}.
(Observe however that the spectral gap condition \eqref{eq:gap_condition} is not satisfied since the eigenvalues of $\mathscr H_\ach(x,\xi)$ coalesce at $x=\pm\frac13$, $\xi=\pm\frac13(\frac12)^{2k_\perp}$ mod $\ZZ^2$.) We state an example of one such result here.\footnote{One can also get, similarly to this result, wells with opposite sign for the anti-chiral Hamiltonian.}

\begin{thm}\label{thm:Harperantichiral}
Let $H_{\operatorname{\Psi DO}}(w)$ be given by \eqref{eq:HarperPDO} in the anti-chiral limit $w=(w_0,0)$, with $k_{\perp} \in \ZZ/2$. Then for $j=1,2,3,4$ there are functions $F^{(j)}(\tau,h)\sim\sum_{n=0}^\infty h^nF^{(j)}_n(\tau)$ with
$$
F_0^{(j)}(\tau)\equiv F_0(\tau)=\frac{\tau}{8\pi^2 w_0}+\mathcal O(\tau^2), \quad F_1^{(j)}=\tfrac{1}{2}, \quad j=1,2,3,4,
$$
with $\lambda^{(j)}_k(h)$ such that $F^{(j)}(\lambda_k^{(j)}(h),h)=kh$, together with quasimodes $u^{(j)}\in L^2(\mathbb T;\CC^4)$ which are nontrivial only in component $j$,
such that $(H_{\operatorname{\Psi DO}}(w_0,0)-z_j(k,h))u^{(j)}=\mathcal O_{L^2}(h^\infty)$, where $z_j(k,h)=c_j+\lambda^{(j)}_k(h)+\mathcal O(h^\infty)$ for $k\in\NN$, and
\begin{equation}\label{eq:cj}
c_1=-3-3w_0,\quad c_2=-3-w_0,\quad c_3=-1-3w_0,\quad c_4=-1-w_0.
\end{equation}
In particular, $H_{\operatorname{\Psi DO}}(w_0,0)$ has approximate eigenvalues $z_j(k,h)=c_j+8\pi^2 w_0(k+\frac12)h+\mathcal O(h^2)$ with $k\in\NN_0$ for $j=1,2,3,4$.
\end{thm}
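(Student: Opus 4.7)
The plan is to diagonalize $b^w$ by the constant unitary $\mathscr U$ from \eqref{eq:conjach} and then apply the scalar Bohr--Sommerfeld rule Theorem \ref{theo:DiSJ} componentwise after localization to a single well. Since $\mathscr U$ is independent of $(x,\xi)$ it commutes with Weyl quantization, so $\mathscr U^\ast b^w(x,hD)\mathscr U$ is the direct sum of four scalar pseudodifferential operators whose symbols $p^{(j)}(x,\xi)=\pm(1\pm 2\cos(2\pi\xi))\pm w_0 U(x)$ are read off from $\mathscr H_\ach$. Producing a quasimode of $b^w$ supported in the $j$-th component of $L^2(\Sone;\CC^4)$ is then equivalent, under the pullback by $\mathscr U$, to producing a scalar quasimode of $p^{(j),w}$.

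Each $p^{(j)}$ is a sum of a function of $x$ alone and a function of $\xi$ alone, each possessing a non-degenerate extremum; hence $p^{(j)}$ has an isolated non-degenerate global minimum $c_j$ at some point $(x_0^{(j)},\xi_0^{(j)})\in T^\ast\Sone$. Using $\cos(2\pi t)=1-2\pi^2 t^2+\mathcal O(t^4)$ and $U(x)=3-4\pi^2 x^2+\mathcal O(x^4)$ (together with the analogous expansions at the other critical points), the Taylor expansion at the well reads
\[p^{(j)}(x,\xi) = c_j + 4\pi^2(\xi-\xi_0^{(j)})^2 + 4\pi^2 w_0(x-x_0^{(j)})^2 + \mathcal O(\lvert(x-x_0^{(j)},\xi-\xi_0^{(j)})\rvert^4),\]
with $c_j$ as in \eqref{eq:cj}. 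A linear symplectic rescaling brings the quadratic part into the form $\tfrac{\lambda}{2}(\eta^2+y^2)$ of \eqref{eq:princ_symb} with some explicit $\lambda>0$ depending on $w_0$, verifying the non-degeneracy hypothesis of Theorem \ref{theo:DiSJ}. Since $b$ has no subprincipal correction, $p_1\equiv 0$ and the Berry-phase term in $F_1$ vanishes.

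Because $p^{(j)}$ is $1$-periodic in $\xi$ and may have further equivalent minima in $x$, I isolate the chosen well by passing to the scalar massive Weyl operator $p^{(j),w}+1-\chi^w$ as in \eqref{eq:massive}, where $\chi\in C_c^\infty(T^\ast\RR)$ is an $h$-independent cutoff equal to $1$ near $(x_0^{(j)},\xi_0^{(j)})$ and supported away from the other minima. The resulting symbol fulfils the global hypotheses of Theorem \ref{theo:DiSJ}: a unique non-degenerate well at $(x_0^{(j)},\xi_0^{(j)})$ with minimum $c_j$ and positive infimum at infinity. Theorem \ref{theo:DiSJ} then delivers $F^{(j)}(\tau,h)\sim\sum_n h^n F_n^{(j)}(\tau)$ with $F_0^{(j)}(\tau)=\tau/\lambda+\mathcal O(\tau^2)$ and $F_1^{(j)}=\tfrac12$, together with approximate eigenvalues $\lambda_k^{(j)}(h)$ defined by $F^{(j)}(\lambda_k^{(j)}(h),h)=kh$ and quasimodes $u^{(j)}_k$ with $\WFh(u^{(j)}_k)=\{(x_0^{(j)},\xi_0^{(j)})\}\subset\{\chi\equiv 1\}$. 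The cutoff $1-\chi^w$ therefore acts on $u^{(j)}_k$ as $\mathcal O(h^\infty)$, so $u^{(j)}_k$ is also a quasimode of the unmodified scalar operator $p^{(j),w}$; embedding it in the $j$-th slot and applying $\mathscr U$ produces the required quasimode of $b^w$ at energy $z_j(k,h)=c_j+\lambda_k^{(j)}(h)+\mathcal O(h^\infty)$. The explicit leading expansion $z_j(k,h)=c_j+\lambda(k+\tfrac12)h+\mathcal O(h^2)$ then follows directly from $F_0^{(j)}(\tau)=\tau/\lambda+\mathcal O(\tau^2)$ and $F_1^{(j)}=\tfrac12$.

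No genuine obstacle arises: diagonalization by a constant unitary bypasses any matrix-valued WKB theory, and the only substantive care needed is the routine massive-cutoff construction required to isolate one well from the infinite periodic array in $\xi$ together with any further minima within one fundamental domain. This stands in sharp contrast to the chiral case treated in Section \ref{sec:chiral}, where the eigenvalues of the principal symbol coalesce at the well and the matrix-valued analysis of Section \ref{sec:wellsnormalform} is genuinely needed.
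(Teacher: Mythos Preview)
Your proposal is correct and follows essentially the same approach as the paper: diagonalize by the constant unitary $\mathscr U$, Taylor expand each scalar component at its non-degenerate minimum, pass to the massive Weyl operator via an $h$-independent cutoff $\chi$ to isolate a single well, apply Theorem \ref{theo:DiSJ}, and then remove $1-\chi^w$ using microlocalization of the quasimodes. The paper carries out the symplectic rescaling explicitly as $(\tilde x,\tilde\xi)=(w_0^{-1/4}x,w_0^{1/4}\xi)$, whereas you leave $\lambda$ implicit, but this is cosmetic. The only step you pass over is the transfer from $L^2(\RR)$ back to $L^2(\Sone)$: the quasimodes produced by Theorem \ref{theo:DiSJ} live on the line, and one must periodize them (as in the proof of Theorem \ref{thm:periodicquasimodesnormalform}) to obtain elements of $L^2(\Sone;\CC^4)$; the paper flags this explicitly, and you should too.
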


\begin{proof}[Sketch of proof]
Due to the diagonal form of $\mathscr H_\ach$ it suffices to study scalar operators by choosing quasimodes $u\in L^2(\mathbb T;\CC^4)$ that are zero in all but one component. By Taylor's formula
$$
(\mathscr H_\ach(x,\xi))_{jj}=c_j+4\pi^2((\xi-\xi_j)^2+w_0(x-x_j)^2)+\mathcal O(\lvert (x,\xi)-(x_j,\xi_j)\rvert^4)
$$
where $c_j$ are as in \eqref{eq:cj}, and where $x_1=x_3=0$, $x_2=x_4=\frac12$ mod $\ZZ$, and $\xi_1=\xi_2=0$, $\xi_3=\xi_4=\frac12$ mod $\ZZ$ if $k_\perp\in\ZZ$, while $\xi_3=\xi_4=0$, $\xi_1=\xi_2=\frac12$ mod $\ZZ$ if $k_\perp\in\ZZ+\frac12$.

Consider $k_\perp\in\ZZ$ and $j=1$, and view $\mathscr H_\ach^w$ temporarily as an operator on $\RR$. We then add a massive term so that we can apply Theorem \ref{theo:DiSJ}: Choose $\chi\in C_0^\infty(T^*\RR)$ with $0\le\chi\le 1$ and $\supp\chi$ contained in a small neighborhood of $(0,0)$ so that 
$$
p(x,\xi):=3(1+w_0)+(\mathscr H_\ach(x,\xi))_{11}+(1-\chi(x,\xi))\ge0
$$
with equality only at $(x_1,\xi_1)=(0,0)$. Make the symplectic change of variables $(\tilde x,\tilde \xi)=(w_0^{-1/4}x,w_0^{1/4}\xi)$ and set $\tilde p(x,\xi)=p(\tilde x,\tilde \xi)$. Then
$$
\tilde p(x,\xi)=\frac{8\pi^2w_0}2(\xi^2+x^2)+\mathcal O(\lvert(x,\xi)\rvert^4)
$$
so by Theorem \ref{theo:DiSJ} there is a function $F(\tau,h)\sim\sum_{n=0}^\infty h^nF_n(\tau)$ with $F_0(\tau)=\tau(8\pi^2w_0)^{-1}$ and $F_1=1/2$ (since there are no lower order terms in $h$ in $\mathscr H_\ach(x,\xi)$; in particular, the subprincipal symbol of $\mathscr H_\ach^w$ is zero at $(x_1,\xi_1)$), such that for all $\delta>0$, the eigenvalues of $\tilde p^w(x,hD)$ in $(-\infty,h^\delta)$ are given as in Theorem \ref{theo:DiSJ} with $k\in\NN$. If $\lambda\in (-\infty,h^\delta)$ is an eigenvalue of $\tilde p^w(x,hD)$ with eigenvector $u\in L^2(\RR)$, and we write $\tilde u(x)=u(w_0^{1/4} x)$, it is easy to check that
$$
\lambda \tilde u(x)=\lambda u(w_0^{1/4}x)=\tilde p^w u(w_0^{1/4}x)=p^w \tilde u(x)
$$
so $\lambda$ is also an eigenvalue of $p^w(x,hD)$. By \cite[Lemma 14.10]{DimSjo} the eigenvector $u$ of $\tilde p^w$ is microlocalized to $(0,0)$, so $(1-\chi^w)u=\mathcal O(h^\infty)$. It follows that $(\mathscr H_\ach^w)_{11}$, as an operator on $\RR$, has eigenvalues near $-3(1+w_0)$ of the form $z(k,h)=-3(1+w_0)+\lambda^{(1)}_k(h)+\mathcal O(h^\infty)$. 
The same is true for $(\mathscr H_\ach^w)_{11}$ as an operator on $\mathbb T$, which can be seen by using a periodization argument similar to the one used in the proof of Corollary \ref{cor:periodicquasimodesnormalform}. Since translation is a symplectic change of variables, the same arguments can be applied to the other components of $\mathscr H_\ach^w$, both when $k_\perp\in\ZZ$ and when $k_\perp\in\ZZ+\frac12$.
\end{proof}

\begin{rem}\label{rem:achLEM}
In contrast to the pseudodifferential Harper model discussed in Theorem \ref{thm:Harperantichiral}, there are no wells in the anti-chiral low-energy model. Indeed, let $H_\ach^w(k_x)$ be given by \eqref{eq:tm20} with $w=(w_0,0)$ and $k_\perp=0$. Conjugating by $\mathscr U$ in \eqref{eq:conjach} shows that $H_\ach^w(k_x)$ is unitarily equivalent to the semiclassical operator $\mathscr L_\ach^w(x,hD)$ with symbol
\begin{align*}
\mathscr L_{\operatorname{ac}}(x,\xi) &=\operatorname{diag}(-\xi-w_0U(x), -\xi+w_0U(x),
\xi-w_0U(x),\xi+w_0U(x))\\&\quad+k_x\operatorname{diag}(-1,-1,1,1)
\end{align*} 
where $k_x=\mathcal O(h)$.
Each component of the principal symbol is a scalar symbol of real principal type so there are no point-localized states near zero energy, see \cite[Theorem 12.4]{zworski}.
\end{rem}

\end{appendix}

\section*{Acknowledgments}
This paper arose from a question about existence of flat bands in moir\'e systems of dimension not equal to two (dimension two being exemplified by twisted bilayer graphene), and we thank Ashvin Vishwanath for having suggested the topic.
The article benefited substantially
from comments and suggestions by the anonymous
referees and conversations with Nils Dencker and Maciej Zworski.
Jens Wittsten was partially supported by The Swedish Research Council grants 2019-04878 and 2023-04872.

\section*{Declarations}

\subsection*{Data availability}
Data sharing not applicable to this article.

\subsection*{Conflict of interest}
The authors have no competing interests to declare that are relevant to the content of this article.

\bibliographystyle{aomalpha}
\bibliography{references}

\end{document}